\documentclass[preprint,cleveref]{colt2026} 

\title[Network approximation of smooth functions on sets with low intrinsic dimension]{Optimal neural network approximation of smooth compositional functions on sets with low intrinsic dimension}
\usepackage{times}

\usepackage{graphicx} 
\usepackage{amsmath} 
\usepackage{amssymb} 
\usepackage{bm}
\usepackage{enumerate}
\usepackage{hyperref}
\usepackage{booktabs}
\usepackage{appendix}
 
\hypersetup{
	colorlinks   = true,
	urlcolor     = blue,
	linkcolor    = blue,
	citecolor   = blue
}
\usepackage{aliascnt}

\usepackage{bm}
\usepackage{dsfont}

\newcommand{\ind}{\mathds{1}}
\newcommand{\R}{\mathds{R}}
\newcommand{\N}{\mathds{N}}

\providecommand{\P}{}
\renewcommand{\P}{\mathds{P}}

\newcommand{\E}{\mathds{E}}

\newcommand{\be}{\bm{e}}

\newcommand{\bu}{\bm{u}}
\newcommand{\bv}{\bm{v}}

\newcommand{\bx}{\bm{x}}
\newcommand{\by}{\bm{y}}
\newcommand{\bz}{\bm{z}}

\newcommand{\bX}{\bm{X}}

\newcommand{\Bcal}{\mathcal{B}}

\newcommand{\Fcal}{\mathcal{F}}
\newcommand{\Gcal}{\mathcal{G}}
\newcommand{\Hcal}{\mathcal{H}}

\newcommand{\Mcal}{\mathcal{M}}
\newcommand{\Ncal}{\mathcal{N}}

\newcommand{\Pcal}{\mathcal{P}}

\newcommand{\Scal}{\mathcal{S}}

\newcommand{\eps}{\varepsilon}

\newcommand{\balpha}{\bm{\alpha}}
\newcommand{\bbeta}{\bm{\beta}}

\renewcommand{\vec}{\operatorname{vec}}

\renewcommand{\bar}{\overline}

\DeclareMathOperator*{\argmin}{arg\,min}

\newcommand{\wh}[1]{\widehat{#1}}
\newcommand{\wt}[1]{\widetilde{#1}}

\makeatletter
\def\blfootnote{\gdef\@thefnmark{}\@footnotetext}
\makeatother

\numberwithin{theorem}{section}
\numberwithin{lemma}{section}
\numberwithin{proposition}{section}
\numberwithin{corollary}{section}
\numberwithin{assumption}{section}
\numberwithin{remark}{section}
\numberwithin{example}{section}
\numberwithin{definition}{section}

\Crefname{theorem}{Theorem}{Theorems}
\Crefname{corollary}{Corollary}{Corollaries}
\Crefname{lemma}{Lemma}{Lemmas}
\Crefname{proposition}{Proposition}{Propositions}
\Crefname{assumption}{Assumption}{Assumptions}
\Crefname{definition}{Definition}{Definitions}
\Crefname{remark}{Remark}{Remarks}
\Crefname{example}{Example}{Examples}

\newcommand{\bit}{\mathrm{bit}}

\coltauthor{%
 \Name{Thomas Nagler} \Email{t.nagler@lmu.de}\\
 \addr Department of Statistics, LMU Munich \\
\addr Munich Center for Machine Learning%
 \AND
 \Name{Sophie Langer} \Email{s.langer@rub.de}\\
 \addr Faculty of Mathematics, Ruhr-Universit\"at Bochum%
}

\begin{document}

\maketitle

\begin{abstract}%
    We study approximation and statistical learning properties of deep ReLU networks under structural assumptions that mitigate the curse of dimensionality. We prove minimax-optimal uniform approximation rates for $s$-H\"older smooth functions defined on sets with low Minkowski dimension using fully connected networks with flexible width and depth, improving existing results by logarithmic factors even in classical full-dimensional settings. A key technical ingredient is a new memorization result for deep ReLU networks that enables efficient point fitting with dense architectures.
    We further introduce a class of compositional models in which each component function is smooth and acts on a domain of low intrinsic dimension. This framework unifies two common assumptions in the statistical learning literature, structural constraints on the target function and low dimensionality of the covariates, within a single model. We show that deep networks can approximate such functions at rates determined by the most difficult function in the composition. As an application, we derive improved convergence rates for empirical risk minimization in nonparametric regression that adapt to smoothness, compositional structure, and intrinsic dimensionality.
\end{abstract}

\begin{keywords}%
    ReLU; Intrinsic dimension; Compositional structure; Minimax rates; Nonparametric regression
\end{keywords}

\section{Introduction}

The empirical success of deep neural networks in high-dimensional learning problems suggests that they can efficiently adapt to the low-complexity structure of the target function. Understanding which structural assumptions enable such behavior and how they can be exploited by concrete network architectures remains a central question in approximation theory and statistical learning. 

A classical structural assumption is the smoothness of the target function. Several authors have proved sharp approximation and estimation results for ReLU networks when the target function is $s$-H\"older smooth \citep{yarotsky2017error,Lu2021,shen2022optimal,ou2024covering}. However, these results inevitably suffer from the curse of dimensionality: approximation and estimation rates scale exponentially in the input dimension $D$. In mean regression problems, this leads to minimax rates of order $n^{-2s/(2s+D)}$ \citep{stone1982optimal}, which quickly become impractical even for moderate $D$. Thus, smoothness alone is insufficient to explain the effectiveness of neural networks in high-dimensional settings.
One prominent approach to overcoming the curse of dimensionality is to assume some compositional structure of the target function. For such models, deep ReLU networks can achieve approximation and estimation rates that depend on the smoothness and dimensionality of the component functions rather than on the ambient dimension, thereby mitigating the curse of dimensionality \citep{schmidt2020nonparametric,kohler2021rate,danhofer2025position}.

An alternative line of work focuses on the intrinsic dimensionality of the input space. Motivated by the manifold hypothesis, this literature assumes that although the ambient dimension may be large, the data lie on a set of much lower intrinsic dimension. 
Under such assumptions, approximation and learning rates depend on the intrinsic rather than the ambient dimension \citep{bickel2007local, kpotufe2011knn}. Several works show that ReLU networks can efficiently approximate smooth functions restricted to low-dimensional manifolds or, more generally, sets with bounded Minkowski dimension \citep{nakada2020adaptive,schmidthieber2019deeprelunetworkapproximation, kohler2023estimation}. However, these results require very wide and relatively shallow architectures, with some imposing additional sparsity on the network parameters. This leaves an unsatisfying gap between existing theory and the architectures used in practice.
\citet{jiao2023deep} provide similar results for dense, deep networks, but their approximation and estimation rates are suboptimal by polynomial factors in network size.

From both modeling and theoretical perspectives, compositional structure and low intrinsic dimensionality should not be viewed as competing explanations for the success of neural networks. \citet{schulte25a} impose a combination of both assumptions in a causal inference setting, but measure intrinsic dimensionality only in the input layer, not in the intermediate layers of the composition.
This can be inefficient, as component functions often also act on domains of low intrinsic dimension. For instance, \citet{ansuini2019intrinsic} showed that trained deep networks typically have hidden-layer representations concentrated on low-dimensional sets, and exploiting this property is crucial for obtaining sharp guarantees.


The main contributions of this paper are as follows.
\begin{itemize}
\item \textbf{Optimal approximation on low-dimensional sets.} We prove a new approximation result for fully connected ReLU networks for $s$-H\"older smooth functions defined on sets with low Minkowski dimension (\Cref{thm:main-approx}). The result allows for arbitrary width-to-depth ratios and uses a network size that is minimax-optimal (up to constants) for this problem (\Cref{prop:optimality-bounded}). Even when intrinsic dimension equals ambient dimension, \Cref{thm:main-approx} improves on the best known result for $(s > 1)$-smooth functions by logarithmic factors in both, width and depth.

\item \textbf{A new memorization result for deep ReLU networks.} We provide a new memorization result for deep ReLU networks with flexible width and depth (\Cref{prop:point-fitting-ours}), which may be of independent interest. The result improves upon existing results of \citet{vardi2022on} in terms of network size and forms a key technical ingredient of our approximation results.

\item \textbf{Approximation of compositional models with low-dimensional components.} We propose a new model class of compositional functions, where each component is smooth and is defined on a set with low Minkowski dimension. This class combines two common assumptions imposed in the deep learning literature, i.e., the compositional structure of the target function and the low intrinsic dimension of its inputs. In \Cref{thm:main-compositional}, we prove that deep ReLU networks can approximate functions from this class at rates determined by the intrinsic dimensionality and smoothness of the component functions.

\item \textbf{Statistical rates for nonparametric regression.} In \Cref{sec:stat}, we apply the approximation results to nonparametric regression and derive improved convergence rates for empirical risk minimization with neural networks. The resulting statistical rates adapt simultaneously to smoothness, compositional structure, and intrinsic dimensionality and improve upon existing results for deep fully connected networks.
\end{itemize}

\section{Preliminaries}
In our analysis, we consider vanilla feedforward neural networks with ReLU activation function defined as follows.
\begin{definition}[Class of fully connected neural networks]
    Let $\sigma : \R \to \R$, $\sigma(x)=\max\{x,0\}=(x)_+$ be the ReLU activation function, and for $v \in \R$ denote by
    $\sigma_v(t) = \sigma(t + v)$ the $v$-biased ReLU function.
    For vectors $\bx = (x_1, \dots, x_D)^\top \in \R^D$ and $\bv = (v_1, \dots, v_D)^\top \in \R^D$, we apply $\sigma_{\bv}$ componentwise, i.e.,
    \begin{align*}
        \sigma_{\bv}(\bx) = (\sigma(x_1+v_1), \dots, \sigma(x_D+v_D))^\top = ((x_1+v_1)_+, \dots, (x_D+v_D)_+)^\top.
    \end{align*}
    A function $f : \R^{N_0} \to \R^{N_{L + 1}}$ is called a
    \emph{fully connected neural network} with depth $L \in \N$
    and width vector $(N_0, N_1, \dots, N_{L + 1})$
    if it can be written as
    \begin{align}
        \label{eq:FNN}
        f(\bx) = \bv_L +  W_L  \sigma_{\bv_{L-1}} \cdots W_1 \sigma_{\bv_0} W_0 \bx ,
    \end{align}
    where for each $i = 0, \dots, L$,
    $W_i \in \R^{N_{i + 1} \times N_{i}}$ is the weight matrix and
    $\bv_i \in \R^{N_{i + 1}}$ is the bias vector. Let $B_i = \max\{\max_{j,k} |W_{i, j,k}|, \|\bv_i\|_{\infty}\}$ be the maximum parameter magnitude in layer $i$.
    The class of such networks with depth $L$,
    maximum width $N \in \N$, and maximum parameter magnitude $B \in \R$
    is defined as
    \begin{align*}
        \mathcal{F}_{d_0, d_{L + 1}}(N, L, B)
        = \Bigl\{ f\text{ of the form \eqref{eq:FNN}}
        : N_0 = d_0,  N_{L + 1} = d_{L + 1}, \max_{i} N_i \le N, \max_{i} B_i \le B\Bigr\}.
    \end{align*}
\end{definition}
To derive non-trivial results, analyses of approximation rates and statistical risk bounds require structural assumptions on the target function \citep[see, e.g.,][Section 3]{gyorfi2002distribution}. In this work, we consider the class of $s$-Hölder smooth functions defined as follows.
\begin{definition}[Class of Hölder smooth functions]
    Let \( K \subset \R^D \), \(s > 0 \), and \( C > 0 \) be constants.
    For a multiindex \( \bm{\alpha} = (\alpha_1, \ldots, \alpha_D) \in \N_0^D \),
    define its order as \( |\bm{\alpha}| = \alpha_1 + \cdots + \alpha_D \), and
    let
    $\partial^{\bm{\alpha}} f =
        {\partial^{|\bm{\alpha}|} f}/
        {\partial x_1^{\alpha_1} \cdots \partial x_D^{\alpha_D}}.$
    Denote by \( \lfloor s \rfloor \) the integer part of \(s \).
    Then, the Hölder class of functions \( \mathcal{H}_{s}(K, C) \) is defined as
    \begin{equation}
        \begin{aligned}
            \mathcal{H}_{s}(K,C)
            = \Bigg\{ f\colon K \to \R
             & \sum_{\substack{\bm{\alpha} : \\ |\bm{\alpha}| < \lfloor s \rfloor}}
            \|\partial^{\bm{\alpha}} f\|_{\infty}  +
            \sum_{\substack{\bm{\alpha} :    \\ |\bm{\alpha}| = \lfloor s \rfloor}}
            \sup_{\substack{\bx, \by \in K   \\ \bx \neq \by}}
            \frac{|\partial^{\bm{\alpha}} f(\bx) - \partial^{\bm{\alpha}} f(\by)|}
            {\|\bx - \by\|_{\infty}^{s - \lfloor s \rfloor}}
            \le C
            \Bigg\}.
        \end{aligned}
    \end{equation}
\end{definition}
The intrinsic dimensionality of data is measured via its Minkowski dimension, defined as follows.

\begin{definition}[Covering number]
    Let $(V, \|\cdot\|)$ be a normed space and $\Theta \subset V$. $\{V_1, \dots, V_N\}$ is an $\epsilon$-cover of $\Theta$ if for each $t \in \Theta$ there exists an $i \in \{1, \dots, N\}$ and $t' \in V_i$ such that $\|t-t'\| \leq \epsilon$. The $\epsilon$-covering number is the minimal size of an $\epsilon$-cover, i.e., 
    \begin{align*}
        \mathcal{N}(\epsilon, V, \|\cdot\|) := \min\{N: \exists \epsilon\text{-cover over } \Theta \text{ of size } N\}.
    \end{align*}
\end{definition}
\begin{definition}[Minkowski dimension]
    The Minkowski dimension of a bounded set $\Mcal \subset \R^D$ is 
    \begin{align*}
        \dim_M(\Mcal) := \limsup_{\epsilon \to 0} \frac{\log \Ncal(\epsilon, \Mcal, \| \cdot \|_\infty)}{\log(1/\epsilon)}.
    \end{align*}
\end{definition}
Note that the Minkowski dimension describes a broader class of low-dimensional sets compared to other dimensionalities. 
For instance, the Minkowski dimension equals the (intrinsic) manifold dimension for smooth manifolds $\Mcal$, but it also applies to less regular sets such as fractals (see \cite{nakada2020adaptive} for more details).
\\
\\
\textbf{Notations.} Vectors are denoted in bold letters, i.e., $\bx=(x_1, \dots, x_D)^T$. We write $a \lesssim b$ if there exists a constant $C \in (0,\infty)$ such that $a \leq C b$, and $a \asymp b$ if $a \lesssim b$ and $b \lesssim a$. We write $\lfloor z \rfloor$ for the largest integer less than or equal to $z \in \R$.

\section{Approximation of smooth functions on sets with low Minkowski dimension}
\label{sec:manifold}

We start with an approximation result for ReLU networks for Hölder classes of functions having domains with low intrinsic dimension.
The numerical constants in such results typically depend exponentially on dimension and smoothness.
To better highlight the key mathematical ideas and to simplify notation, we do not track the precise values of these constants in the following.
In particular, we write
\begin{align*}
    \phi \in \wt  \Fcal(N, L, B) \quad \text{whenever} \quad \exists\, C = C(D, s) \in (0, \infty) \text{ such that } \phi \in \Fcal(CN, CL, B^C).
\end{align*}

\subsection{Main result}
The following theorem shows that networks with variable width and depth can efficiently approximate $s$-Hölder functions on sets with low intrinsic dimensionality. 
\begin{theorem} \label{thm:main-approx}
    Let $\eps > 0$, $S = [a_j, b_j]_{j = 1}^D$, $f \in \Hcal_s(S, C)$, $\Mcal \subseteq S$ be a set with covering number $\mathcal{N}(\eps^{1/s}, \Mcal, \|\cdot\|_{\infty}) \lesssim \eps^{-d/s}$. Then for $N, L \in \N$ with $N^2L^2 \asymp \eps^{-d/s}$ there is a neural network
    \begin{align*}
        \phi \in \wt \Fcal_{D, 1}(N, L + L \sqrt{\log L / \log N}, N) \quad \text{such that} \quad  \sup_{\bx \in \Mcal} |f(\bx) - \phi(\bx)| \lesssim \eps.
    \end{align*}
    In particular, if $\dim_M(\Mcal) = d$, it holds 
        $\sup_{\bx \in \Mcal} |f(\bx) - \phi(\bx)| \lesssim (NL)^{-2s/d}.$
\end{theorem}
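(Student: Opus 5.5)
Following the strategy announced in the introduction, I would combine a local Taylor approximation with the memorization result (\Cref{prop:point-fitting-ours}). The construction is in the spirit of \citet{Lu2021} and \citet{shen2022optimal}, but the full grid is replaced by a cover of $\Mcal$.

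\textbf{Step 1: discretization.} Set $\delta = \eps^{1/s}$ and partition $S$ into axis-aligned cubes of side length $\asymp \delta$. Only cubes that meet $\Mcal$ matter. By the covering assumption, there are at most $M \lesssim \eps^{-d/s} \asymp N^2L^2$ such cubes.

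\textbf{Step 2: indexing the relevant cubes.} Index the relevant cubes by $1,\dots,M$. Build a network $\psi_1$ that maps each $\bx$ in the interior of a relevant cube $Q_k$ to an anchor point $\bx_k \in Q_k$. Coordinatewise, this uses step functions: a piecewise-constant approximation of $x_j \mapsto \lfloor x_j/\delta \rfloor$ costs width $N$ and depth $L$ when $N L \gtrsim \delta^{-1}$, roughly. Then map the integer grid location to a scalar via $\sum_j m_j \delta^{-(j-1)}$-type encoding. Finally, memorize the map from scalar codes to the index $k$, or directly to the needed data. The memorization result fits $M \asymp N^2L^2$ points with a network of width $N$ and depth $L\sqrt{\log L/\log N}$-type overhead. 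The bit-extraction ideas in that result are what give the $N^2L^2$ count instead of $NL$.

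\textbf{Step 3: local Taylor polynomials.} On each cube, approximate $f$ by its Taylor polynomial of degree $\lfloor s\rfloor$ at $\bx_k$, which has error $\lesssim \delta^s = \eps$. This requires the coefficients $\partial^{\balpha}f(\bx_k)/\balpha!$ at the anchor points. These coefficients are again values at $M$ points, and they need accuracy only $\eps$. So I would quantize each coefficient to $O(\log(1/\eps))$ bits and memorize it with \Cref{prop:point-fitting-ours} from the scalar code. There are $C(D,s)$ coefficients, which affects only constants. The monomials $(\bx-\bx_k)^{\balpha}$ and their products with the coefficients are produced by Yarotsky-type multiplication networks of width $N$ and depth $L$. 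On bounded inputs these reach accuracy $N^{-cL}\ll\eps$, which is enough since $\log(1/\eps)\asymp \log(NL)$.

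\textbf{Step 4: boundary strips.} Points $\bx$ near cube boundaries are mis-indexed by the step-function approximation. I would use the standard trick of \citet{Lu2021}: take $3^D$ shifted partitions, and combine the resulting approximants via a middle-value (median) network. The median costs only $O(1)$ extra width and depth factors. Each shifted cover of $\Mcal$ still has $\lesssim \eps^{-d/s}$ relevant cubes, because each cube meets a bounded number of cover elements.

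\textbf{Main obstacle.} The hardest part is Step 2/3: making memorization of $M \asymp N^2L^2$ values with $O(\log(1/\eps))$-bit precision fit within width $\wt{O}(N)$ and depth $\wt{O}(L+L\sqrt{\log L/\log N})$. I expect this step alone to generate the $\sqrt{\log L/\log N}$ depth term, and to require parameter bound $N^{C}$ rather than a larger one. I would first try to invoke \Cref{prop:point-fitting-ours} with the scalar codes as (well-separated) inputs, and check that the separation of the codes, $\gtrsim \delta^{D}$-ish, is polynomial in $NL$.

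The final claim then follows: $\dim_M(\Mcal)=d$ gives the covering bound (with $d$ slightly enlarged, or directly by the limsup for small $\eps$), and solving $N^2L^2\asymp\eps^{-d/s}$ gives $\eps \asymp (NL)^{-2s/d}$.
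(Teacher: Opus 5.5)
Your proposal follows essentially the same route as the paper: Taylor expansion on the $\lesssim \eps^{-d/s}$ grid cells meeting $\Mcal$, step functions to locate the cell, memorization of quantized Taylor coefficients via \Cref{prop:point-fitting-ours}, monomial and product networks from \citet{Lu2021}, and median smoothing. The differences are minor: you encode the cell index as a scalar yourself, whereas the paper feeds the $D$-dimensional grid point into \Cref{prop:point-fitting-ours}, which projects it to a scalar internally; and your count of relevant cells for each shifted partition handles the fact that median smoothing also uses the approximant in neighbouring cells, which the paper's \Cref{cor:median-smoothing} passes over (its conclusion actually needs accuracy on the cells adjacent to $\Mcal_K$ too). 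One correction: when $d=1$ you need $K \asymp N^2L^2$ cells per axis, so your condition $NL \gtrsim \delta^{-1}$ fails, and direct piecewise-linear interpolation does not fit the budget, since \Cref{lem:pw-linear} only allows $\lesssim N^2L$ breakpoints; the two-scale step function of \Cref{lem:stepfun-2} (cf.\ Proposition~4.3 of \citet{Lu2021}) covers this case.
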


\begin{table}[t]
    \centering
    \small
    \setlength{\tabcolsep}{5pt}
    \begin{tabular}{lllll}
        \toprule
        Reference                                              &
        Structure of $\mathcal M$                              &
        Network                                                &
        Width                                                  & Depth                                                                 \\
        \midrule
        This work                                    & Approx.~Minkowski-$d$ & dense & $ N $ & $L + L\sqrt{\log L / \log N}$ \\[0.3em]

        \citet{schmidthieber2019deeprelunetworkapproximation}; &
        Smooth manifold                                        &
        sparse                                                 &
        $N^2$                                                  & $\log N$                                                              \\

        \citet{chen2022nonparametric}                          &                       &       &                                       \\

        \citet{nakada2020adaptive}                             &
        Exact Minkowski-$d$                                    &
        sparse                                                 &
        $N^{2}$                                                & $O(1)$                                                                \\

        \citet{zhang2023effective}                             &
        Approx.~Minkowski-$d$                                  &
        sparse                                                 &
        $N^2$                                                  & $O(1)$                                                                \\

        \citet{kohler2023estimation}                           &
        Smooth manifold                                        &
        dense                                                  &
        $N$                                                    & $\log N$                                                              \\

        \citet{jiao2023deep}                                   &
        Smooth manifold                                        &
        dense                                                  &
        $N \log N$                                             & $L \log L$                                                            \\

                                                               &
        Approx.\ manifold                                      &
        dense                                                  &
        $N^{C} \log N$                                         & $L^{C} \log L$,                                                       \\

                                                               &
        Exact Minkowski-$d$                                    &
        dense                                                  &
        $N^{C} \log N$                                         & $L^{C} \log L$,                                                       \\

        \citet{Lu2021}                                         &
        $\mathcal M = [0,1]^d$                                 &
        dense                                                  &
        $N\log N$                                              & $L \log L$                                                            \\
        \bottomrule
    \end{tabular}
    \caption{Comparison of neural network approximation results for $s$-Hölder functions restricted to a set $\mathcal M\subset[0,1]^D$ with target accuracy $\lesssim (NL)^{-2s/d}$.
        Here $d$ denotes the intrinsic dimension of $\mathcal M$, $L = 1$ if unspecified,
        and $C \gg 1$ is a numerical constant.
    }
    \label{tab:approx-comparison}
\end{table}

The result improves and extends the literature in several ways; see \Cref{tab:approx-comparison} for a summary.
In particular, when $\Mcal$ has a general low-dimensional structure and the network is deep in the sense that $L \asymp \eps^{-\alpha}$, $\alpha > 0$, most previous results either do not apply or suffer from an exponentially worse dependence on $\eps$ ; see, e.g., \cite{jiao2023deep}. Even in comparatively simple cases, such as when $\Mcal$ is a smooth manifold or when $\Mcal = [0, 1]^D$, \Cref{thm:main-approx} improves upon known bounds by at least a logarithmic factor in the network width or depth.
Notably, even the strongest existing results for approximating $\Hcal_s([0, 1]^D, C)$ with $s > 1$ using deep, fully connected networks \citep{Lu2021} are sharpened by \Cref{thm:main-approx}, yielding logarithmic improvements in width and depth.

 In \Cref{thm:main-approx} we only require an upper bound on the covering numbers of $\Mcal$ at scale $\eps^{1/s}$, which is more general than assuming a bound on the Minkowski dimension. This is important when the domain $\Mcal$ is only approximately low-dimensional, for example, when the input lies close to a low-dimensional set but is perturbed by noise.
To illustrate, let $\Mcal \subset \R^D$ satisfy $\dim_M(\Mcal) = d$ and define its $\delta$-enlargement by $\Mcal^{+\delta} = \{\bx \in \R^D \colon \inf_{\by \in \Mcal}\| \bx - \by\|_\infty \le \delta\}$. Then $\mathcal{N}(\eps, \Mcal^{\delta+}, \|\cdot\|_{\infty}) \lesssim \eps^{-d}$ for all $\eps \ge 3\delta$, but $\dim_M(\Mcal) = D$. 
In Section \ref{sec:compositional}, this property will be essential for approximating compositions of functions defined on low-dimensional domains.

\subsection{Optimality}

In the special case of Lipschitz functions on $[0, 1]^D$, i.e., $f \in \Hcal_1([0, 1]^D, C)$, \citet{shen2022optimal,ou2024covering} show that a network of width $N / \sqrt{ \log N}$ and depth $L$ can approximate $f$ up to an error $(NL)^{-2/D}$. At first glance, this may suggest that the network sizes in our result could be improved by eliminating root-logarithmic factors in width or depth. However, such logarithmic factors are intrinsic to our setting and cannot, in general, be removed. 
Intuitively, approximating functions on a low-dimensional subset $\Mcal \subset [0, 1]^D$ with $\dim_M(\Mcal) = d < D$ is comparably difficult as approximating a function on $[0, 1]^d$. This explains why the required width and depth depend primarily on the intrinsic dimension $d$ rather than the ambient dimension $D$. However, using the same function class $\Fcal$ to approximate functions uniformly over all sets $\Mcal$ with $\dim_M(\Mcal) = d$ makes the problem slightly more challenging. This is formalized in the following proposition.

\begin{proposition} \label{prop:optimality}
    Let $\| f- g\|_{\Mcal}  = \sup_{\bx \in \Mcal} |f(\bx) - g(\bx)|$.
    \begin{enumerate}[(i)]
        \item For any function class $\Fcal$,
              \begin{align*}
                  \sup_{g \in \Hcal_s([0, 1]^D, C)}   \sup_{\dim_M(\Mcal) = d}   \inf_{f \in \Fcal} \| f- g\|_{\Mcal}  \lesssim \eps \quad \Rightarrow \quad \log \Ncal(\eps, \Fcal, \|\cdot\|_\infty) \gtrsim \eps^{-d/s} \log(\eps^{-1}).
              \end{align*}
        \item For any fixed $\Mcal$ with $\dim_M(\Mcal) = d$, there is a function class $\Fcal_\Mcal$ such that
              \begin{align*}
                  \sup_{g \in \Hcal_s([0, 1]^D, C)} \inf_{f \in \Fcal_\Mcal} \| f- g\|_{\Mcal} \lesssim \eps  \qquad \text{and} \qquad \log \Ncal(\eps, \Fcal, \|\cdot\|_\infty) \lesssim \eps^{-d/s} .
              \end{align*}
    \end{enumerate}
\end{proposition}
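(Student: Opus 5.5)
We prove (ii) by a direct local-polynomial construction and (i) by a packing argument over a large family of pairs $(g,\Mcal)$.

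\textbf{Part (ii).} Fix $\Mcal$ with $\dim_M(\Mcal)=d$, so that $\Ncal(\delta,\Mcal,\|\cdot\|_\infty)\lesssim \delta^{-d-o(1)}$. To keep the exponent clean we assume the covering bound holds with exponent exactly $d$, as in \Cref{thm:main-approx}. Take $\delta=\eps^{1/s}$ and a minimal cover of $\Mcal$ by cubes $Q_1,\dots,Q_m$ with $m\lesssim\eps^{-d/s}$. Choose a partition of unity $\{\psi_k\}$ subordinate to slightly enlarged cubes, with bounded overlap. Let $\Fcal_\Mcal$ be the class of functions $\sum_k \psi_k P_k$, where each $P_k$ is a polynomial of degree $\lfloor s\rfloor$ centered at the center of $Q_k$, and its coefficients are quantized to a grid.

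The key point is the grid choice. A Taylor-type argument shows that it suffices to quantize each coefficient of order $|\balpha|$ to accuracy $\eps\,\delta^{-|\balpha|}$, with coefficients bounded by $C$. This gives $O(\log(1/\eps))$ bits per coefficient, so the naive count is $\eps^{-d/s}\log(1/\eps)$ bits, which is too many. To remove the logarithm, use the classical Kolmogorov--Tikhomirov trick: encode the polynomial of each cube relative to its neighbor. Neighboring Taylor expansions of $g$ differ by $O(\eps\,\delta^{-|\balpha|})$ in the coefficient of order $|\balpha|$, so each difference costs only $O(1)$ bits.

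To apply this, order the cubes along a spanning path, or tree, of the cube-adjacency graph in each connected component. Since $\Mcal$ may be disconnected, there can be up to $m$ components. Each component needs one absolute start costing $O(\log(1/\eps))$ bits. The remedy is to grid the starting points of the components themselves: cubes that are far apart can still be chained through a nearest-neighbor path, where the Taylor difference over a distance $r$ is $O(r^{s-|\balpha|})$ rather than $O(\delta^{s-|\balpha|})$. The cost for such a jump is then $O(\log(r/\delta))$. Summing these jump costs over a minimum spanning tree of the cube centers, and using the covering-number bound at all scales, keeps the total at $O(m)$. The result is $\log\Ncal\lesssim\eps^{-d/s}$.

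\textbf{Part (i).} We use a packing argument. For each $\Mcal$, a $(2\eps)$-separated subset of $\Hcal_s$ restricted to $\Mcal$ must be matched injectively to $\eps$-balls of $\Fcal$. Since all these matchings go into the same $\Fcal$, we pack jointly over pairs $(g,\Mcal)$. Take $\delta=\eps^{1/s}$ and the grid of $M=\delta^{-D}$ cells in $[0,1]^D$. Let $\Mcal$ range over sets consisting of $m\asymp\delta^{-d}$ chosen cells' center points, union a fixed $d$-dimensional set (for instance a $d$-dimensional face) that makes $\dim_M(\Mcal)=d$ exactly without affecting the count. Let $g$ range over bump sums $\sum_k \pm\,\eps\,\phi((\bx-\bx_k)/\delta)$ supported at the chosen cells.

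Now consider any $f\in\Fcal$ that $\eps/3$-approximates some pair. Its sign pattern at a grid point $\bx_k$ is determined by $f(\bx_k)$, so on every chosen cell $f$ reveals a definite sign. Hence one $f$ can serve pairs determined by at most $\binom{M}{m}$ choices of cells, each with a unique sign pattern. Counting then gives $|\text{pairs}|/\binom{M}{m}=2^m$, which is not enough on its own. To recover the logarithm, we instead encode the subset via signs on all $M$ cells: a single $f$ defines a sign at every grid point, so one $f$ serves at most one sign vector restricted to every subset. We must therefore count distinct restrictions. Take $g$ to vanish off the chosen cells and to equal $+\eps$ on them. Then $f$ determines the set $\{k: f(\bx_k)>\eps/2\}$, which must be exactly the chosen set, since $g=0$ at unchosen grid points that are included in $\Mcal$.

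The difficulty is that the unchosen points are not in $\Mcal$. Fix this by putting all $M$ grid points into $\Mcal$, which is too many, or, better, by using $\eps$-scale heights with multiple levels. Concretely, let $\Mcal$ always contain a fixed $d$-dimensional grid of $m$ points, and encode $\log M$ bits per point through the position of a bump within a $\delta$-cell at sub-scale $\delta/\mathrm{poly}$. This yields about $m\log(1/\eps)$ distinguishable configurations, with each $f$ determined up to $\eps$-balls. Getting this encoding to be valid, with the Hölder norm bounded and the Minkowski dimension kept at $d$, is the main obstacle. The last step is to convert the resulting bound on the number of distinguishable configurations into the covering bound $\log\Ncal(\eps,\Fcal,\|\cdot\|_\infty)\gtrsim\eps^{-d/s}\log(\eps^{-1})$.
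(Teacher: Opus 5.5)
\textbf{Part (ii).} Your part (ii) is correct, given the covering bound at every scale that you assume, and it takes a different and sharper route than the paper. The paper uses all piecewise polynomials on $\asymp\eps^{-d/s}$ cubes with coefficients in $[-C,C]$ and argues from the parameter count alone. But that class has $\log\Ncal(\eps,\cdot,\|\cdot\|_\infty)\gtrsim\eps^{-d/s}\log(\eps^{-1})$; its free constant terms alone force this. The log-free bound therefore really needs your Kolmogorov--Tikhomirov chaining along a minimum spanning tree, in which the number of edges longer than $t$ is below $\Ncal(t/4,\Mcal,\|\cdot\|_\infty)\lesssim t^{-d}$ for $t\ge 2\eps^{1/s}$.

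You should make two points explicit. First, $\Fcal_\Mcal$ must be the image of $\Hcal_s$ under the quantized-Taylor map; taking all quantized coefficient vectors would give $\eps^{-O(m)}$ elements. Second, each increment must be measured against the neighbour's quantized jet re-expanded at the new centre. Raw differences of lower-order coefficients are in general of order $\delta$, not $\delta^{s-|\balpha|}$.

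\textbf{Part (i) has a gap.} You stop at ``the main obstacle'', and the proposed remedy cannot work, for two reasons.
\begin{enumerate}
\item A bump of width $\delta/p$ obeying the H\"older constraint has height $\lesssim(\delta/p)^s=\eps p^{-s}$, so positions below scale $\delta$ are invisible at accuracy $\eps$.
\item On an essentially fixed $\Mcal$ with $\Ncal(t,\Mcal,\|\cdot\|_\infty)\lesssim t^{-d}$ at all scales, your own part (ii) gives an admissible class with $\log\Ncal\lesssim\eps^{-d/s}$. So no packing on such a set can produce the logarithm.
\end{enumerate}
Your earlier diagnosis is right. Because the hypothesis is $\sup_\Mcal\inf_f$, the approximant may depend on $\Mcal$. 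Varying the support of $\delta$-width, $\eps$-height bumps then certifies only $\log\Ncal\gtrsim m$: bump sums indexed by an $m$-universal family of sign vectors on the $M$ cells, of size $2^{m+O(\log m)}\log M$, already serve all such pairs.

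Following the paper would not rescue this. Its Lemma A.1 passes from $\sup_S\inf_f$ to a single $f_i$ that is good for all $S$ simultaneously, and that step is false in general. A counterexample: two singleton sets, four targets taking values in $\{0,3\eps\}$ on them, and $\Fcal=\{0,3\eps\}$.

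\textbf{The missing idea.} The logarithm comes from scales \emph{coarser} than $\delta=\eps^{1/s}$, and one fixed set suffices.
\begin{itemize}
\item Take $m\asymp\delta^{-d}$ points on a grid of spacing $h=m^{-1/D}$, plus a $d$-dimensional cube of side $\delta$. Then $\dim_M(\Mcal)=d$ and $\Ncal(\delta,\Mcal,\|\cdot\|_\infty)\lesssim\delta^{-d}$.
\item Disjointly supported bumps of width $h$ and height $c\,h^s$ lie in $\Hcal_s([0,1]^D,C)$.
\item Quantize each height in steps of $\kappa\eps$, with $\kappa$ large relative to the implicit constant in the hypothesis. This gives a family separated on $\Mcal$ with log-cardinality $m\log(c\,h^s/(\kappa\eps))\gtrsim(1-d/D)\,\eps^{-d/s}\log(\eps^{-1})$, because $h^s\asymp\eps^{d/D}$.
\item Since $\Mcal$ is now fixed, the covering argument is valid and gives the claim for $d<D$.
\end{itemize}
Under the literal reading, even the option you discarded works: all $\delta^{-D}$ grid points together with a $d$-dimensional face have Minkowski dimension $d$, which gives $\eps^{-D/s}$.

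Some use of $d<D$ is unavoidable. For $d=D$, an $\eps$-net of $\Hcal_s([0,1]^D,C)$ satisfies the hypothesis with $\log\Ncal\asymp\eps^{-D/s}$, with no logarithmic factor.
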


Combining \Cref{prop:optimality}~(i) with the covering number bounds for neural networks from \citet{ou2024covering}, we obtain the following optimality result for our approximation theorem.

\begin{proposition} \label{prop:optimality-bounded}
    The network size in \Cref{thm:main-approx} is optimal in the sense that
    \begin{align*}
        \log \Ncal(\eps, \wt \Fcal_{D, 1}(N, L + L \sqrt{\log L / \log N}, N), \|\cdot\|_\infty) \lesssim \eps^{-d/s} \log(\eps^{-1}).
    \end{align*}
\end{proposition}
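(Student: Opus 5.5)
We plug the network class from \Cref{thm:main-approx} into the covering number bound for bounded-parameter ReLU networks of \citet{ou2024covering}. That bound states, roughly, that for networks of width $N'$, depth $L'$ and weights bounded by $B$, restricted to a bounded input domain,
\begin{align*}
    \log \Ncal(\eps, \Fcal(N', L', B), \|\cdot\|_\infty) \lesssim N'^2 L' \log\bigl(L' B N' / \eps\bigr),
\end{align*}
or a sharper variant that exploits the structure $(N')^2 L'$ times a logarithmic factor in the depth-width product. We then compute this expression for $N' = CN$, $L' = C(L + L\sqrt{\log L/\log N})$ and $B = N^C$, and compare it with $\eps^{-d/s}\log(\eps^{-1})$ using the calibration $N^2L^2 \asymp \eps^{-d/s}$ from \Cref{thm:main-approx}.

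The crude bound $N^2 L'\log(\cdot)$ is not enough. It gives $N^2 L$ times a logarithm, whereas the target is $N^2L^2\log(NL)$; that is actually \emph{smaller} than needed, which suggests I am misreading which quantity must be of order $N^2L^2$. Fully connected networks have about $N^2 L$ parameters, but the relevant complexity for deep networks is $N^2L^2$: see the covering and VC-type bounds in \citet{ou2024covering} and Bartlett et al., where the pseudo-dimension scales as $W L \log W$ with $W \asymp N^2L$. So the step is to invoke the bound in the form
\begin{align*}
    \log \Ncal \lesssim W L' \log(\cdot) \asymp N^2 L'^2 \log(B N' L'/\eps),
\end{align*}
with $W$ the number of parameters. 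This form is what makes the uniform approximation lower bound of \Cref{prop:optimality}~(i) compatible with \Cref{thm:main-approx}.

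Plugging in the values, $L'^2 \lesssim L^2(1 + \log L/\log N)$, so
\begin{align*}
    N^2 L'^2 \log(NL/\eps) \lesssim N^2L^2\Bigl(1+\tfrac{\log L}{\log N}\Bigr)\log(\eps^{-1}).
\end{align*}
The main obstacle is the extra factor $1 + \log L/\log N$. When $L \lesssim N^{O(1)}$ this factor is $O(1)$ and we are done. When $L$ is much larger than $N$, I would use a refined covering bound in which the logarithmic factor is $\log N$ (or $\log$ of the width times the weight bound) rather than $\log(1/\eps)$. The $\sqrt{\log L/\log N}$ depth inflation in \Cref{thm:main-approx} is designed precisely so that the product $L'^2 \log N \asymp L^2(\log N + \log L) \asymp L^2 \log(NL)$ holds. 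Since $\log(NL) \asymp \log(\eps^{-1})$ by the calibration $N^2L^2 \asymp \eps^{-d/s}$, the total is $\lesssim N^2L^2\log(\eps^{-1}) \asymp \eps^{-d/s}\log(\eps^{-1})$. Combined with \Cref{prop:optimality}~(i), this shows that the network size cannot be reduced by more than constant factors.
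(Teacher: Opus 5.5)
Your strategy matches the paper's: bound the covering number of $\Fcal(N',L',B)$ via \citet{ou2024covering}, with $N'=CN$, $L'=C(L+L\sqrt{\log L/\log N})$, $B=N^C$, and then use $N^2L^2\asymp\eps^{-d/s}$ and $\log(NL)\asymp\log(\eps^{-1})$. The gap is that the decisive step rests on a covering bound you never state correctly.

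\begin{itemize}
\item Your first form, $N'^2L'\log(L'BN'/\eps)$, does not hold for bounded-weight deep networks. The parameter-to-function map has Lipschitz constant of order $((N'+1)B)^{L'}$. As you noticed, that bound would contradict \Cref{prop:optimality}~(i) combined with \Cref{thm:main-approx}.
\item Your second form, $N^2L'^2\log(BN'L'/\eps)$, comes from a pseudo-dimension heuristic, which concerns a different complexity measure than sup-norm covering. It is a valid consequence but is lossy. It attaches $\log(\eps^{-1})$ to the $L'^2$ term and so leaves the spurious factor $1+\log L/\log N$.
\item Your remedy for $L\gg N$ is a ``refined bound in which the logarithmic factor is $\log N$ rather than $\log(\eps^{-1})$''. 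You assert it without a source. It also cannot be right as a general covering bound, since a covering number must grow as $\eps\to 0$.
\end{itemize}

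The missing piece is the precise form of Theorem 2.1 of \citet{ou2024covering}:
\[
\log\Ncal(\eps,\Fcal_{D,1}(N',L',B),\|\cdot\|_\infty)\lesssim N'^2L'\log\bigl((N'+1)^{L'}B^{L'}/\eps\bigr)= N'^2L'^2\log\bigl((N'+1)B\bigr)+N'^2L'\log(\eps^{-1}).
\]
The $\log(\eps^{-1})$ is not removed; it multiplies only the lower-order factor $N'^2L'$. The factor $L'^2$ is paired only with $\log((N'+1)B)\asymp\log N$, because $B=N^C$.

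\begin{itemize}
\item The first term is $\lesssim N^2L^2(1+\log L/\log N)\log N=N^2L^2\log(NL)$.
\item The second term is $\lesssim N^2L(1+\sqrt{\log L/\log N})\log(\eps^{-1})\lesssim N^2L^2\log(\eps^{-1})$.
\end{itemize}

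Both estimates hold for all $N,L$ simultaneously, so no case split between $L\lesssim N^{O(1)}$ and $L\gg N$ is needed. With this bound in hand, your observation $L'^2\log N\asymp L^2\log(NL)$ becomes exactly the paper's computation.
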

Up to constants, it is not possible to construct smaller bounded-weight networks that achieve the same approximation accuracy uniformly over all sets $\Mcal$ with $\dim_M(\Mcal) = d$.

\subsection{Outline and main tools for the proof of Theorem \ref{thm:main-approx}}
The proof of \Cref{thm:main-approx}, given in Section \ref{apx:proofs}, follows the standard approximation strategy for Hölder-smooth functions used throughout the literature. Suppose $\Mcal \subseteq [0, 1]^D$, fix $K \sim \eps^{-1/s}$, and partition $[0,1]^D$ in a regular grid with mesh size $1/K$ and grid points $\mathcal{G}_K=\{\bz_{\bbeta}=\bbeta/K: \bbeta \in \{0, \dots, K-1\}^D\}$. For $\bx \in [0, 1]^D$, define $\bbeta(\bx):=(\lfloor K x_j \rfloor)_{j=1}^D$ and let $\bx_{\bbeta} = \bbeta(\bx)/ K$ denote the grid point of the cell containing $\bx$. By Taylor's theorem, 
          \begin{align*}
              \sup_{\bx \in [0, 1]^D} \left|f(\bx) - \sum_{\substack{\|\balpha\|_1 \leq \lfloor s \rfloor }} \frac{\partial^{\balpha} f(\bx_{\bbeta})}{\balpha!} (\bx - \bx_{\bbeta})^{\balpha} \right| \lesssim \eps.
          \end{align*}
Thus, uniformly approximating $f$ over $[0,1]^D$ reduces to approximating piecewise polynomials of degree $\lfloor s \rfloor$, with coefficients given by derivatives of $f$ evaluated at the grid points. The network construction proceeds in stages: first approximating the map $\bx \mapsto \bx_{\bbeta}$, then implementing the truncated Taylor polynomial. Proposition 4.1 of \citet{Lu2021} (see also \Cref{lem:poly-approx}) provides an efficient approximation of the monomial basis, reducing the remaining task to fitting coefficients $\partial^{\balpha} f(\bx_{\bbeta}) / \balpha!$ at the grid points $\bx_{\bbeta}$. Constructing networks that memorize these values and combining all parts yields the desired approximation.

\medskip
\textit{Differences to previous proof strategies.} Compared to prior work, our proof systematically uses recent tools for approximating functions with dense, bounded-weight networks with flexible width and depth  \citep{Lu2021, ou2024three, ou2024covering}.
In particular, approximating the map $\bx \mapsto \bx_{\bbeta}$ and the associated monomials is straightforward (\Cref{lem:stepfun-2}, \Cref{lem:poly-approx}) and can be achieved with networks significantly smaller than the final network in \Cref{thm:main-approx}. Moreover, we adapt the median smoothing technique of \citet{Lu2021} to sets of low intrinsic dimension (\Cref{prop:median-smoothing-nn}), allowing an approximation that was initially valid only on the set where $\bx \mapsto \bx_{\bbeta}$ is computed exactly to be extended to the entire set $\Mcal$.

The main challenge lies in efficiently solving the point fitting problem. 
When $\Mcal = [0, 1]^D$, this is typically handled by re-indexing the grid points $\bz_{\bbeta}$ to $\{1, \ldots, K^D\}$ and using bit-extraction techniques to memorize the corresponding coefficients \citep[e.g.,][]{shen2022optimal,Lu2021,ou2024three}.
For a general set $\Mcal$ with Minkowski dimension $d < D$, this approach becomes inefficient. Only grid cells $Q_{\bbeta} = \bz_{\bbeta} + [0, 1/K]^D$ intersecting $\Mcal$ need to be stored, and there are only $O(K^d)$ such $\bbeta$. However, there is no simple reindexing from the full grid $\{0, \ldots, K-1\}^D$ that uniquely identifies these relevant cells.
\citet{nakada2020adaptive} circumvented this problem by embedding the Taylor coefficients directly into the network weights using extremely wide architectures. While effective in theory, this results in sparse, very wide and shallow networks that are far from practical implementations.

Our key technical innovation is a new result on network memorization that solves the problem efficiently with dense networks of flexible width and depth.
\begin{proposition} \label{prop:point-fitting-ours}
    Let $N, L, s \in \N$, $\delta > 0$, and suppose we are given samples
    $$(\bx_1, y_1), \dots, (\bx_J, y_J) \in [0, 1]^D \times \{0, \dots, 2^r - 1\},$$
    where $\|\bx_i - \bx_j\| \ge \delta$ for every $i \neq j$ and $J \le N^2 L^2$. Denote $R = 2J^2 D /\delta $. Then there is 
    $$\phi \in \wt \Fcal_{D, 1}\left(N,  L + L \frac{\sqrt{\log L}}{\sqrt{\log N}}  + L\frac{\log(\delta^{-1}) +  r}{\sqrt{\log N \log (NL)}}, N + 2^{(r + \log (R)) / L} \right) $$
    such that
        $\phi(\bx_j) =  y_j \quad \text{for } j = 1, \dots, J.$
\end{proposition}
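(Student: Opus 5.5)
We follow the architecture of \citet{vardi2022on}, but with the wide-plus-deep bit-extraction machinery of \citet{Lu2021,ou2024three}. The plan has three stages: (a) project the inputs to distinct integers, (b) compress the labels together with the integer identifiers into a small number of weights, and (c) decode them with bit extraction using width $N$ and depth $\approx L$.

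\emph{Stage (a): projection.} We choose a direction $\bu \in \R^D$ with $\|\bu\|_2=1$ such that $|\bu^\top(\bx_i-\bx_j)| \ge \delta/(J^2 D)$ for all $i\neq j$. Such a $\bu$ exists by a standard probabilistic argument: for a random unit vector, each pair fails with probability $\lesssim \sqrt{D}\cdot(\text{threshold})/\delta$, and there are fewer than $J^2/2$ pairs. After an affine rescaling, computable exactly by one linear layer with weights bounded by $R=2J^2D/\delta$, the map $\bx_j \mapsto t_j$ sends the points to reals in $[0,R]$ with pairwise gaps at least $1$. We then sort the points by $t_j$ and set $m_j=\lfloor t_j\rfloor$. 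These are distinct integers in $\{0,\dots,R\}$, but computing the floor exactly is not needed. It suffices to partition the $J$ sorted points into $M \asymp N^2$ consecutive blocks of size about $L^2$.

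\emph{Stage (b): block index.} We compute the block index with a step function having $M$ steps, realized on the finitely many values $t_j$. Because consecutive $t_j$ differ by at least $1$, the breakpoints can be placed in the gaps. Following \Cref{lem:stepfun-2} and \citet{Lu2021}, such a step function is implementable by a network of width $O(N)$ and depth $O(L)$; alternatively it can be implemented as a sum of $N$ ReLU ramps of width $1$, two layers deep, composed with itself. From the block index $k$ we then look up, by a second step or table function of the same size, two stored numbers $a_k$ and $b_k$. The number $a_k$ encodes the binary expansions of the (offset) positions $t_j$ in block $k$, each using $\log R$ bits. The number $b_k$ encodes the corresponding labels $y_j$, each using $r$ bits. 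Each number thus holds $L^2(\log R + r)$ bits.

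\emph{Stage (c): decoding.} Inside block $k$ we iterate over the roughly $L^2$ points. At each step we extract the next $\log R+r$ bits from $(a_k,b_k)$, compare the extracted position with $t$ via a ReLU indicator (valid because the gaps are $\ge 1$), and accumulate the label when they match. Bit extraction with width $N$ extracts about $\log N$ bits per layer. The first plan, storing all $L^2(\log R+r)$ bits in one number, requires weights of size $2^{L^2(\log R+r)}$, which is too large. We therefore split the storage across the $L$ layers instead: each layer holds a chunk of $(r+\log R)/L$ bits per entry, which gives the weight bound $N+2^{(r+\log R)/L}$. This balancing of depth against width, in the style of \citet{ou2024three}, produces the additional depth terms $L\sqrt{\log L/\log N}$ and $L(\log\delta^{-1}+r)/\sqrt{\log N\log(NL)}$. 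We arrive at these by taking blocks of size $L^2\log N/\log(NL)$ rather than $L^2$ and accounting for the bits extracted per layer.

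\emph{Main obstacle.} The crux is this bookkeeping in stage (c): matching the number of bits stored per block, the bits extracted per layer, and the weight magnitude so that the depth is exactly $L$ plus the stated lower-order terms. Once block sizes and chunk lengths are fixed, stages (a) and (b) and the final assembly follow from the cited lemmas. Exactness holds because every step is evaluated only on the finite set $\{t_j\}$, whose gaps allow exact ReLU step functions.
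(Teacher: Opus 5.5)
There is a genuine gap in the depth bookkeeping of stages (b)--(c), which is the step you yourself identify as the crux. You use about $N^2$ blocks of about $L^2$ points (or $L^2\log N/\log(NL)$ points), and your decoder scans every point of a block sequentially. Each point costs $\lceil c/n\rceil\ge 1$ layers, where $c\approx\log R+r$ is the number of bits per point and $n\approx\log N$ is the number of bits extracted per layer. The decoder alone therefore has depth of order $L^2c/\log(NL)$. The admissible depth is $\asymp Lc/\sqrt{\log N\log(NL)}$, so you exceed it by a factor of about $L\sqrt{\log N/\log(NL)}$, which is not a constant. Distributing bits across layers cannot help, because it does not reduce the number of bits that must be read.

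The missing idea is to let the block lookup consume depth as well as width. By \Cref{lem:pw-linear}, width $N$ and depth $\tilde L$ realize a piecewise linear map with $N^2\tilde L$ pieces. The paper takes $n=\lfloor\log_3N\rfloor$ and $\tilde L=L\sqrt{\log(NL)/n}$. This gives $M=N^2L\sqrt{\log(NL)/n}$ blocks of only $L'=L\sqrt{n/\log(NL)}$ points each. The lookup maps $x$ directly to two reduced-alphabet ternary codes $u_m,w_m$ at depth cost $\tilde L\lesssim L+L\sqrt{\log L/\log N}$. The decoder then uses \Cref{lem:bit-decode} and has depth $L'\lceil c/n\rceil\lesssim Lc/\sqrt{\log N\log(NL)}$. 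This balance, with $ML'=N^2L^2$, is what produces the stated depth.

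Your explanation of the weight bound is also not a working mechanism. Packing many bits into one real code costs nothing in weight magnitude: the codes lie in $[0,1]$, and decoding a ternary expansion with digits in $\{0,1\}$ needs weights at most $3^n\le N$. In the paper, the factor $2^{(r+\log R)/L}$ has two concrete sources.
\begin{enumerate}
\item \Cref{lem:pw-linear} with input gap $2^{-s}$, where $s=\log R+1$, yields weights of order $(M^6 2^{4s})^{1/L}$.
\item The match test is done at the fine scale with $O(1)$ slopes and outputs $2^{-(s+r)}y_j$. This is then multiplied by $2^{s+r}$ through $L$ layers, each with weight $2^{(s+r)/L}$.
\end{enumerate}
For the same reason, your single-layer rescaling of the projection to $[0,R]$, with weights of size $R$, already breaks the bound $N+2^{(r+\log R)/L}$. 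You would need to keep the projection in $[0,1]$, or spread that scaling over $L$ layers.

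Finally, storing only $\lfloor t_j\rfloor$ leaves no uniform margin for an exact ReLU match test. A wrong value $t_k-\lfloor t_j\rfloor$ with $t_k\le t_j-1$ can lie arbitrarily close to $0$ from below, while a correct value can equal $0$. The paper stores $s+2$ bits, so that $|x_j-\hat x_j|\le 2^{-(s+2)}$ while $|x_j-\hat x_k|>2^{-(s+1)}$ for $k\neq j$.
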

The proof is given in \Cref{apx:proofs} and outlined as follows. First, implement an affine map $z\colon [0, 1]^D \to [0, 1]$ that keeps the values $z(\bx_j)$ sufficiently separated. Then implement a piecewise linear map that assigns the $z(\bx_j)$ to crafted numbers $u, w$. These numbers encode the necessary binary digits of nearby $z(\bx_k)$ values along with the corresponding $y_k$ values in a reduced-alphabet ternary expansion. Using techniques developed in \citet{ou2024three}, the values $z(\bx_k)$ and $y_k$ can then be decoded from $u, w$ using flexible bounded-weight networks (\Cref{lem:bit-decode}). It remains to compare $z(\bx_j)$ to the stored $z(\bx_k)$ values and return the corresponding $y_j$ when a match is found.

Our proof strategy shares some similarity with the best previous result on network memorization \citep[Theorem 5.1]{vardi2022on}, but makes more efficient use of network width and weight magnitude. In particular, \Cref{prop:point-fitting-ours} improves their result from width $N^2$ to $N$, reduces the depth by a $\sqrt{\log N}$ factor, and the weight magnitude from about $2^{L (r + \log(R))}$ to $N + 2^{(r + \log(R))/L}$.

\section{Compositional models with low Minkowski dimension}
\label{sec:compositional}

In this article we consider a model which combines the compositional model introduced in \cite{kohler2021rate} with the low Minkowski dimension assumption considered in \cite{nakada2020adaptive}. More precisely, we consider a class of compositional functions in which each component function either depends on only a small number of input coordinates or is defined on a set of intrinsically low dimension, quantified via its Minkowski dimension.

\begin{definition}[Class of compositional models on sets with low Minkowski dimension]
    \label{def1}
    A function \( f \colon \mathbb{R}^D \supset \Mcal \to \mathbb{R} \) is called a \emph{compositional model of level}
    \(\ell \in \mathbb{N}\) with dimension vector $(D_0, \dots, D_\ell)$, where $D_0=D$ and $D_\ell=1$ and \emph{smoothness and intrinsic dimension parameter set}
    \[
        \mathcal{P} = \{(d_{ij}, s_{ij}) \in \mathbb{N} \times \mathbb{R}_{\geq 1} \colon i = 0, \dots, \ell,\ j = 1, \dots, D_{i}\},
    \]
    if there exist functions
        $g_i \colon [-C, C]^{D_i} \to [-C, C]^{D_{i+1}}$, $i = 0, \dots, \ell$,   for some $C>0$,
    such that $g_0 = \text{id}$ and
    \begin{align}
        \label{comp}
        f = g_{\ell} \circ g_{\ell-1} \circ \dots \circ g_0,
    \end{align}
    where \( g_i \) is a vector-valued function with components
        $g_i = (g_{i1}, \dots, g_{iD_{i+1}})^{\top}$
    satisfying the following conditions.
    \begin{itemize}
        \item[\textnormal{(C)}] (\emph{Coordinate sparsity}) There exists a subset $S_{ij} \subseteq \{1, \dots, D_i\}$ such that $g_{ij}(\bx)=g_{ij} \circ \pi_{S_{ij}}(\bx)$, where $\pi_{S_{ij}}(\bx) =(x_k)_{k \in |S_{ij}|}$ denotes the coordinate projection.
        \item[\textnormal{(S)}] (\emph{Smoothness}) It holds \( g_{ij} \in \mathcal{H}_{s_{ij}}([-C, C]^{|S_{ij}|}, C) \).
        \item[\textnormal{(M)}] (\emph{Low Minkowski dimension}) The set $\mathcal{M}_{ij} := \pi_{S_{ij}} \circ g_{i - 1}(\Mcal)$  satisfies $\dim_M({\mathcal{M}_{ij}}) \leq d_{ij}$.
    \end{itemize}
    The corresponding class of compositional models is defined as
    \begin{align*}
        \mathcal{G}(\ell, \mathcal{P},  \bm{D})=\{f \text{ of the form }\eqref{comp}\}.
    \end{align*}
\end{definition}

\begin{remark} 
    The function classes considered in \cite{kohler2021rate, schmidt2020nonparametric}
    arise as special cases of Definition~\ref{def1} in which only coordinate sparsity \textnormal{(C)} and smoothness \textnormal{(S)} is imposed on the component functions.
    Conversely, \cite{nakada2020adaptive} studies smooth function classes characterized by
    low Minkowski dimension, which are a special case of Definition~\ref{def1} with $\ell = 1$.
\end{remark}

The modeling assumption in Definition~\ref{def1} is motivated by the structure of many real-world data-generating processes. In many applications, the target function can be decomposed into a sequence of simpler transformations, where at each stage of the hierarchy the component functions may depend only on a small subset of coordinates, reflecting sparse or localized interactions. At the same time inputs are often constrained to intrinsically low-dimensional sets due to latent factors, dependencies among covariates, or physical and biological constraints. By allowing either coordinate sparsity or low Minkowski dimension at each component or both, Definition~\ref{def1} captures these common structural features of regression functions while remaining flexible enough to encompass classical additive and index models as well as more complex hierarchical dependencies.


\begin{example}[Sparse feature extraction with latent factor aggregation]
    Let $\mathcal{X} \subset \R^D$ be a compact set. Consider a function of the form
    \begin{align*}
        f(\bx) = h(g_1(\bx_{S_1}), \dots, g_m(\bx_{S_m})),
    \end{align*}
    where each $S_j \subset \{1, \dots, D\}$ with $|S_{j}| \leq d$ and $\bx_{S_j} = (x_{k})_{k \in S_j}$. If all component functions $g_j:\R^{|S_{\ell}|} \to \R$ are Lipschitz, the feature map
    \begin{align*}
        \bx \mapsto G(\bx) = (g_1(\bx_{S_1}), \dots, g_m(\bx_{S_m})) \in \R^m
    \end{align*}
    is Lipschitz and, by standard properties of Minkowski dimension under Lipschitz maps, we have
    \begin{align*}
        \dim_M(G(\mathcal{X})) \leq \min\left\{ \dim_M(\mathcal{X}), m\right\}.
    \end{align*} Hence, $G(\bx)$ takes values in a set of intrinsically low dimension. As the second-level function $h$ acts on this set, it therefore satisfies the low Minkowski dimension condition \textnormal{(M)}.
\end{example}

Approximating a compositional model reduces to approximating its individual component functions. The global approximation error can be controlled by the largest component-wise approximation error, as shown in the following lemma.

\begin{lemma}[Error propagation lemma]
\label{lemprop} \label{lem:error-propagation:1}
    Let $h=h_{\ell} \circ \cdots \circ h_0$ be a compositional model defined as in \Cref{def1}. Let $\hat{h}$ be obtained by replacing each $h_{ij}$ by $\hat{h}_{ij}$. Define partial compositions
    \begin{align*}
        H_i:=h_i \circ \cdots \circ h_0, \quad \hat{H}_i:=\hat{h}_i \circ \cdots \circ \hat{h}_0,
    \end{align*}
    and the accumulated errors and
    \begin{align*}
        \delta_0 = 0, \qquad \delta_i:=\sup_{x \in \mathcal{M}} \|H_i(\bx)-\hat{H}_i(\bx)\|_{\infty}, \quad i \ge 1.
    \end{align*}
    For each $i=0, \dots, \ell, j=1, \dots, D_i$, let $\mathcal{M}^{+\delta_{i-1}}_{ij}$ be the $\delta_{i-1}$-enlargement of $\mathcal{M}_{ij}$ defined as $\mathcal{M}_{ij}^{+\delta_{i-1}} := \{\bx \in \R^{d_{ij}}: \inf_{\by \in \mathcal{M}_{ij}}(\bx,\by) \leq \delta_{i-1}\}$
and assume that 
    \begin{align}
    \label{error}
        \sup_{\bz \in (\mathcal{M}_{ij})^{+\delta_{i-1}}} |h_{ij}(\bz)-\hat{h}_{ij}(\bz)| \leq \epsilon_{ij}.
    \end{align}
    Then  $ \delta_i \leq C \delta_{i-1} + \max_j \epsilon_{ij} $ and 
    \begin{align*}
        \|h-\hat{h}\|_{\infty} = \delta_{\ell} \leq \sum_{i=1}^{\ell} C^{\ell-i} \max_j \epsilon_{ij} \leq \ell \max\{C^{\ell-1}, 1\} \max_{ij} \epsilon_{ij}.
    \end{align*}
   
\end{lemma}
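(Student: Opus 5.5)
We argue by induction on the level $i$, using the triangle inequality to split the error at each stage into an approximation term and a propagation term. Fix $\bx \in \Mcal$ and $i \ge 1$. For each component $j$ we write
\begin{align*}
    |H_{ij}(\bx) - \hat H_{ij}(\bx)|
    &= |h_{ij}(H_{i-1}(\bx)) - \hat h_{ij}(\hat H_{i-1}(\bx))| \\
    &\le |h_{ij}(H_{i-1}(\bx)) - h_{ij}(\hat H_{i-1}(\bx))| + |h_{ij}(\hat H_{i-1}(\bx)) - \hat h_{ij}(\hat H_{i-1}(\bx))|.
\end{align*}
By coordinate sparsity (C), both $h_{ij}$ and $\hat h_{ij}$ depend only on the coordinates in $S_{ij}$, so all evaluations reduce to the projected points $\pi_{S_{ij}} H_{i-1}(\bx)$ and $\pi_{S_{ij}} \hat H_{i-1}(\bx)$.

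The first term is handled by the smoothness assumption (S). Since $s_{ij} \ge 1$, the function $h_{ij} \in \Hcal_{s_{ij}}([-C,C]^{|S_{ij}|}, C)$ is Lipschitz with respect to $\|\cdot\|_\infty$. When $s_{ij} > 1$, its first partial derivatives are bounded by $C$, and the mean value theorem gives a Lipschitz constant of order $C$, possibly up to a dimensional factor that we absorb into the generic constant $C$. When $s_{ij} = 1$, the Hölder seminorm itself is the Lipschitz bound. Because $\|\pi_{S_{ij}} H_{i-1}(\bx) - \pi_{S_{ij}} \hat H_{i-1}(\bx)\|_\infty \le \delta_{i-1}$, the first term is at most $C\delta_{i-1}$. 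One caveat must be checked here: $\hat H_{i-1}(\bx)$ might leave $[-C,C]^{D_i}$. In that case we either extend $h_{ij}$ in a Lipschitz way or assume the approximants are clipped to $[-C,C]$; clipping is easy to implement with ReLU networks and does not increase the error.

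The second term is handled by assumption \eqref{error}. The point $\pi_{S_{ij}} H_{i-1}(\bx)$ lies in $\Mcal_{ij}$, and the perturbed point $\pi_{S_{ij}} \hat H_{i-1}(\bx)$ lies within sup-distance $\delta_{i-1}$ of it. Hence the perturbed point belongs to $\Mcal_{ij}^{+\delta_{i-1}}$, and the second term is at most $\epsilon_{ij}$. Taking the maximum over $j$ and the supremum over $\bx \in \Mcal$ gives the recursion $\delta_i \le C\delta_{i-1} + \max_j \epsilon_{ij}$.

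Unrolling this recursion from $\delta_0 = 0$ gives $\delta_\ell \le \sum_{i=1}^{\ell} C^{\ell-i}\max_j \epsilon_{ij}$. Bounding each power $C^{\ell-i}$ by $\max\{C^{\ell-1},1\}$ then yields the final inequality. Finally, $\|h-\hat h\|_\infty = \delta_\ell$ holds by definition, because $D_\ell = 1$ and the supremum is taken over $\Mcal$.

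The main obstacle is bookkeeping rather than substance. Two points need care. The first is the precise domain issue just described, namely that $\hat H_{i-1}$ must map into the domain where $h_{ij}$ is defined and Lipschitz. The second is using the same constant $C$ for the Lipschitz bound, which may require enlarging $C$ by a factor depending on $|S_{ij}|$. Everything else is a direct triangle-inequality induction.
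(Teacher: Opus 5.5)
Your proposal is correct and is essentially the paper's proof: you split $|h_{ij}(\bu)-\hat h_{ij}(\hat\bu)|$ by the triangle inequality into a Lipschitz term bounded by $C\delta_{i-1}$ and an approximation term bounded by $\epsilon_{ij}$ (using $\hat\bu\in\Mcal_{ij}^{+\delta_{i-1}}$), then unroll the recursion from $\delta_0=0$. The two caveats you flag are both left implicit in the paper: that $\hat\bu$ may leave $[-C,C]^{|S_{ij}|}$, where $h_{ij}$ is defined and Lipschitz, and that the Lipschitz constant equals $C$ only up to how the H\"older norm is normalized. Componentwise clipping of the intermediate outputs, as you suggest, validly handles the first of these, since it cannot increase the error relative to the true values in $[-C,C]$.
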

\begin{proof}
    Fix $\bx \in \mathcal{M}$. For each $i \geq 1$ and component $j$, let
    \begin{align*}
        \bu := \pi_{S_{ij}}(H_{i-1}(\bx)) \in \mathcal{M}_{ij}, \quad \hat{\bu} := \pi_{S_{ij}}(\hat{H}_{i-1}(\bx)).
    \end{align*}
    Then $\|\bu-\hat{\bu}\|_{\infty} \leq \|H_{i-1}(\bu)-\hat{H}_{i-1}(\bu)\|_{\infty} \leq \delta_{i-1}$, so $\hat{\bu} \in (\mathcal{M}_{ij})^{+\delta_{i-1}}$. Using triangle inequality together with the Lipschitzness of $h_{ij}$ and \eqref{error}, yields
    \begin{align*}
        |h_{ij}(\bu)-\hat{h}_{ij}(\hat{\bu})| &\leq |h_{ij}(\bu)-h_{ij}(\hat{\bu})|+|h_{ij}(\hat{\bu})-\hat{h}_{ij}(\hat{\bu})|
        \leq C \|\bu-\hat{\bu}\|_{\infty} + \epsilon_{ij} \leq C\delta_{i-1} + \epsilon_{ij},
    \end{align*}
     yielding $\delta_i \leq C\delta_{i-1} + \max_j \epsilon_{ij}$. Unrolling the recursion proves the bound.
\end{proof}


We note that many functions can be expressed as a composition of simpler functions and thus form a compositional model of corresponding level $\ell$ according to Definition \ref{def1}. For instance,
\begin{align*}
    g(x,y):= xy = \frac{(x+y)^2 - (x-y)^2}{4} = g_1 \circ g_0,
\end{align*}
where $g_{01}(x,y) = (x+y)^2$ and $g_{02}(x,y)=(x-y)^2$ and $g_1(z_1, z_2)=(z_1 - z_2)/4$.
Here $D=D_1=2$ and $D_0=d_{01}=d_{02}=2$. In turn, $xy$ can be considered as a compositional model of level $1$. The previous lemma shows, that if we have given approximators for every function in the composition, then the composition of these approximators forms also an approximator for the composition model.
In general, most constructive approximation results for neural networks employ an underlying compositional structure, even when the target function is not explicitly represented in this form. Such a structure is also used to propagate approximation errors in the proof of \Cref{thm:main-approx}. 

For approximating compositional functions as defined in \Cref{def1}, we now combine \Cref{thm:main-approx} with \Cref{lem:error-propagation:1}. The following theorem shows  that the networks required to approximate the compositional model are only slightly larger than those for a single, $s^*$-smooth function $g_{ij}$ with $d^*$-dimensional domain. To clearly capture the enlargement, we leave constants depending on $\ell$ explicit.


\begin{theorem} \label{thm:main-compositional}
    Let $\eps > 0$, $S = [a_j, b_j]_{j = 1}^D$, $\Mcal \subseteq S$, $g \in \Gcal(\ell, \Pcal, \bm D)$, and 
    $$(d^*, s^*) \in \arg\max_{(d, s) \in \Pcal} d/s.$$
    Then there are $N, L \in \N$ with $N^2L^2 \asymp \eps_0^{-d^*/s^*} := \left(\eps/\ell(C + 1)^\ell \right)^{-d^*/s^*}$ and a neural network
    \begin{align*}
        \phi \in \wt \Fcal_{D, 1}(\max_{i} D_iN, \ell(L + L \sqrt{\log L / \log N}), N) \quad \text{such that} \quad  \sup_{\bx \in \Mcal} |g(\bx) - \phi(\bx)| \lesssim \eps.
    \end{align*}
\end{theorem}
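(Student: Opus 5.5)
The plan is to approximate each component $g_{ij}$ separately with \Cref{thm:main-approx}, stack the approximators of one level in parallel, compose the levels, and control the total error with \Cref{lem:error-propagation:1}.

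\textbf{Choice of accuracy.} Set $\eps_0 = \eps/(\ell(C+1)^\ell)$. I would first show by induction that, if every component approximator has error at most $\eps_0$ on the relevant enlarged domain, then
\begin{align*}
\delta_i \le \sum_{k\le i} C^{i-k}\eps_0 \le i (C+1)^{i}\eps_0 .
\end{align*}
In particular $\delta_{i-1}\le \eps/(C+1)\le \eps_0^{1/s}$ up to constants. This last inequality holds once $\eps$ is small, since $s\ge1$.

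\textbf{Covering bound on the enlarged domains.} Fix $(i,j)$ and put $d=d_{ij}$, $s=s_{ij}$. Because $\dim_M(\Mcal_{ij})\le d$, we have $\Ncal(\eps_0^{1/s},\Mcal_{ij},\|\cdot\|_\infty)\lesssim \eps_0^{-d/s}$. Strictly, the Minkowski dimension only gives $\eps_0^{-(d+\eta)/s}$ for every $\eta>0$. I would absorb this into the constant or read (M) as a covering-number condition; I flag it as a technical point to address. By the enlargement remark after \Cref{thm:main-approx}, and since $\delta_{i-1}\lesssim\eps_0^{1/s}$, the enlargement satisfies
\begin{align*}
\Ncal\bigl(\eps_0^{1/s},\Mcal_{ij}^{+\delta_{i-1}},\|\cdot\|_\infty\bigr)\lesssim \eps_0^{-d/s}.
\end{align*}
It suffices to cover at scale $3\delta_{i-1}$ and refine. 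Since $d/s\le d^*/s^*$, this count is also $\lesssim \eps_0^{-d^*/s^*}\asymp N^2L^2$.

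\textbf{Approximating one component.} Apply \Cref{thm:main-approx} to $g_{ij}\in\Hcal_s([-C,C]^{|S_{ij}|},C)$ on $\Mcal_{ij}^{+\delta_{i-1}}$. The theorem is stated for $N^2L^2\asymp$ covering number, but a larger network at the common $N,L$ works, because we may use a finer grid. So I would apply it with accuracy $\eps_0$ and the common pair $N,L$ fixed by $\eps_0^{-d^*/s^*}$. This gives $\hat g_{ij}\in\wt\Fcal(N, L+L\sqrt{\log L/\log N}, N)$ with error $\lesssim\eps_0$ on the enlarged set. Precomposing with the linear projection $\pi_{S_{ij}}$ is absorbed into the first weight matrix.

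\textbf{Assembling the network.} Place the $D_{i+1}$ networks of level $i$ in parallel, giving width $\le D_{i+1}N$, and pad their depths to equal length using identity ReLU pairs. Then concatenate the $\ell$ levels, for total depth $\ell(L+L\sqrt{\log L/\log N})$ up to constants. Weights stay bounded by $N^{C}$. Clipping to $[-C,C]$ between levels is optional and cheap. \Cref{lem:error-propagation:1} then yields $\sup_{\Mcal}|g-\phi|\le \ell\max(C^{\ell-1},1)\,\eps_0\lesssim\eps$.

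\textbf{Main obstacle.} The delicate step is verifying the hypothesis of \Cref{thm:main-approx} on the perturbed domains $\Mcal_{ij}^{+\delta_{i-1}}$. The argument needs $\delta_{i-1}$ to be at most the grid scale $\eps_0^{1/s_{ij}}$, which in turn requires the chosen $\eps_0$ (with the factor $(C+1)^\ell$) to force $\delta_{i-1}\lesssim \eps_0\le\eps_0^{1/s}$. The same step must also reconcile the Minkowski-dimension limsup with the explicit covering bound.
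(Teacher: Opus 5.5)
Your proposal is correct and follows the paper's proof. You apply \Cref{thm:main-approx} componentwise on enlargements of the $\Mcal_{ij}$; your ``cover at scale $3\delta_{i-1}$ and refine'' is the paper's \Cref{cov}, with $s_{ij}\ge 1$ keeping the radius below $\eps_0^{1/s_{ij}}$ up to constants. You then use a common pair $N,L$ justified by $d_{ij}/s_{ij}\le d^*/s^*$, parallelize and compose, and conclude with \Cref{lem:error-propagation:1} and $\eps_0=\eps/(\ell(C+1)^\ell)$. The only difference is that the paper fixes accuracies and enlargement radii a priori as $\eps_i=\eps_0(C+1)^i$, whereas you use the uniform accuracy $\eps_0$ on the inductively controlled radii $\delta_{i-1}\lesssim\eps_0$.

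The ``finer grid'' remark is not needed and can backfire, since below scale $\delta_{i-1}$ the enlarged set is full-dimensional. What makes the common $N,L$ admissible is the monotonicity $d_{ij}/s_{ij}\le d^*/s^*$ you already invoke: keep the grid at scale $\eps_0^{1/s_{ij}}$ and memorize fewer than $N^2L^2$ points.
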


\textbf{Proof sketch}
Fix a target accuracy $\eps>0$. Fix $\eps_0>0$ to be chosen later and define a geometrically increasing error target
$\eps_i := \eps_0 (C+1)^i , i=0,\dots,\ell.$
Assume $\eps_\ell\le 1$ and set
$L' := L + L\sqrt{\log L/\log N}.$
As in \Cref{lem:error-propagation:1}, define
\[
G_i := g_i\circ\cdots\circ g_0, \quad
\hat G_i := \hat g_i\circ\cdots\circ \hat g_0, \quad \delta_0 := 0, \quad 
\delta_i := \sup_{\bx\in\Mcal}\|G_i(\bx)-\hat G_i(\bx)\|_\infty .
\]

\medskip
\noindent\emph{Step 1: Approximation on fixed enlargements.}
For each level $i=0,\dots,\ell$ and each component $g_{ij}$, approximate $g_{ij}$ uniformly on the enlarged set $(\Mcal_{ij})^{+\eps_i}$ with accuracy $\eps_i$.  
Using \Cref{thm:main-approx}, this is possible with networks
\[
\hat g_{ij}\in \tilde{\Fcal}_{D_{i-1},1}(N,L',N), \quad \text{s.t. } N^2L^2 \asymp \eps_i^{-d_{ij}/s_{ij}}.
\]
Since $\eps_i$ increases in $i$ and $\eps_0 \le 1$, the single condition $N^2L^2 \asymp \eps_0^{-d^*/s^*}$ works for all $i,j$.

\medskip
\noindent\emph{Step 2: Error propagation.}
We show inductively that for all $i=0,\dots,\ell$,
\[
\delta_i \le \eps_{i+1},
\qquad 
\hat G_i \in \tilde{\Fcal}_{D,D_i}(\max_k D_k N,\, iL',\, N).
\]
The base case $i=1$ follows since $\delta_0=0$ and the approximation of $g_{1j}$ applies on $(\Mcal_{1j})^{+\eps_1}$.  
For the inductive step, assume $\delta_{i-1}\le\eps_i$. Then the inputs to $g_{ij}$ produced by $\hat G_{i-1}$ lie in $(\Mcal_{ij})^{+\eps_i}$, where the local approximation error is at most $\eps_i$. By the error propagation lemma (\Cref{lem:error-propagation:1}),
\[
\delta_i \le C\delta_{i-1}+\eps_i
\le (C+1)\eps_i
= \eps_{i+1}.
\]
The stated network size follows from standard parallelization and composition arguments.

\medskip
\noindent\emph{Step 3: Choice of $\eps_0$.}
\Cref{lem:error-propagation:1} yields
\[
\delta_\ell 
\le \sum_{i=1}^\ell C^{\ell-i}\eps_i
\le \ell (C+1)^\ell \eps_0 .
\]
Choosing
$\eps_0 := \eps/\ell (C+1)^\ell$ 
ensures 
\[
\delta_\ell  = \sup_{\bx \in \Mcal} | G_\ell(\bx) - \hat G_\ell(\bx)| ) = \sup_{\bx \in \Mcal} | g(\bx) - \phi(\bx)| \le \eps,
\]
as claimed. Full details can be found in \Cref{apx:proofs}.
\hfill $\blacksquare$ 


\section{Application to nonparametric regression}
\label{sec:stat}

We finally apply our approximation results to derive statistical rates in a nonparametric regression problem.
Suppose we observe $n$ independent and identically distributed pairs of covariates and corresponding responses $(\bX_i, Y_i)$ from the model
\begin{align*}
    Y_i = f_0(\bX_i) + \epsilon_i, \quad i=1, \dots, n
\end{align*}
with $\epsilon_i \sim \mathcal{N}(0, \sigma^2)$ independent of $\bX_i$. We want to learn the regression function $f_0: \R^D \to \R$ by empirical risk minimization over a class of neural networks. To keep the proofs simple, we only consider the square loss and Gaussian noise, noting that the same rates can be derived in more general loss and noise settings with \citet[Theorem 3.4.1]{van2023weak}.

\begin{theorem} \label{thm:main-stat}
    Let $s > 0$, $d \in \N$, and define $\eps_n = n^{-s/(2s + d)} \log(n)^{s/(2s + d)}$. 
    Choose $N_n, L_n \in \N$ such that $N_n^2L_n^2 \asymp  \eps_n^{-d/s}$ and assume the class $ \Fcal_n = \wt \Fcal_{D, 1}(N_n, L_n + L_n \sqrt{\log L_n / \log N_n}, N_n)$ satisfies $\inf_{f \in  \Fcal_n} \|f - f_0\|_{\infty} \lesssim \eps_n$. Then  any empirical risk minimizer 
    \begin{align*}
         \wh f \in \argmin_{f \in \mathcal{F}_n} \frac{1}{n} \sum_{i=1}^n (Y_i - f(\bX_i))^2,
    \end{align*}
    satisfies
    \begin{align*}
         \E\left[\| \wh f - f_0\|_{L^2(\P_{\bX})}^2\right] \lesssim \eps^2_n.
    \end{align*}
\end{theorem}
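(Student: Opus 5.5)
We prove \Cref{thm:main-stat} with the standard oracle inequality for least squares over a uniformly bounded class, combined with the covering number bound from \Cref{prop:optimality-bounded}. First, we truncate. Since $f_0$ is bounded (it is well approximated by networks and the covariates live in a bounded set), we replace each network $f$ by its clipping $T_F f = \max\{-F,\min\{F,f\}\}$ at a level $F \gtrsim \|f_0\|_\infty$. This can be realized by two extra ReLU layers, so the class keeps its form up to constants, and clipping never increases the distance to $f_0$. The approximation hypothesis $\inf_{f\in\Fcal_n}\|f-f_0\|_\infty\lesssim\eps_n$ therefore still holds.

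Next, we invoke an oracle inequality for empirical risk minimizers with Gaussian noise. One version is \citet[Lemma 4]{schmidt2020nonparametric}; an alternative is \citet[Theorem 3.4.1]{van2023weak} with a peeling argument. It gives, for $\delta \in (0,1]$,
\begin{align*}
\E\bigl[\|\wh f - f_0\|_{L^2(\P_{\bX})}^2\bigr] \lesssim \inf_{f\in\Fcal_n}\|f-f_0\|_\infty^2 + \frac{(F^2+\sigma^2)\,\log \Ncal(\delta,\Fcal_n,\|\cdot\|_\infty) + 1}{n} + (F+\sigma)\,\delta .
\end{align*}
The first term is $\lesssim \eps_n^2$ by assumption. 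We take $\delta = 1/n$, so the last term is $O(1/n) \lesssim \eps_n^2$.

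The key step is the entropy term. The covering bound of \citet{ou2024covering}, exactly as used in \Cref{prop:optimality-bounded}, gives for bounded-weight networks of width $N_n$, depth $L_n' = L_n + L_n\sqrt{\log L_n/\log N_n}$ and weights bounded by $N_n^C$:
\begin{align*}
\log\Ncal(\delta,\Fcal_n,\|\cdot\|_\infty) \lesssim N_n^2 L_n^2 \log(1/\delta) + N_n^2L_n'^2\ \text{-type corrections}.
\end{align*}
We need to check that the extra depth $L_n\sqrt{\log L_n/\log N_n}$ contributes at most a factor $\log(N_nL_n)$. This is precisely the bookkeeping behind \Cref{prop:optimality-bounded}: the bound there reads $\eps^{-d/s}\log(\eps^{-1})$ with $N^2L^2\asymp\eps^{-d/s}$. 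Replacing $\log(1/\eps)$ by $\log(1/\delta) = \log n$, which is allowed since $\eps_n$ is polynomial in $n$, yields
\begin{align*}
\log\Ncal(1/n,\Fcal_n,\|\cdot\|_\infty) \lesssim \eps_n^{-d/s}\log n .
\end{align*}
I expect this to be the main obstacle: one has to verify that the Ou–Wang-type covering bound, with weight magnitude polynomial in $N_n$ and the $\sqrt{\log}$ depth overhead, yields only a single $\log n$ factor and not $\log^2 n$. If the cited bound is stated only at scale $\eps$, I would redo the Lipschitz-in-parameters argument. Perturbing the weights by $\eta$ changes the output by at most $(N_nB)^{L_n'}\eta$, which would give $\log$-covering $\approx (\#\text{params})\cdot L_n'\log(N_n)$. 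That is too crude, so it is genuinely necessary to rely on the refined bound from \citet{ou2024covering}.

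Finally, we balance the terms. Plugging in the entropy bound gives
\begin{align*}
\E\bigl[\|\wh f-f_0\|_{L^2(\P_{\bX})}^2\bigr] \lesssim \eps_n^2 + \frac{\eps_n^{-d/s}\log n}{n} .
\end{align*}
With $\eps_n = (n/\log n)^{-s/(2s+d)}$ we have $\eps_n^{-d/s}\log n/n = (n/\log n)^{d/(2s+d)-1} = \eps_n^2$. Both terms are therefore of order $\eps_n^2$, which proves the claim.
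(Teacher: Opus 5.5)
Your proposal is essentially the paper's proof: an oracle inequality for least squares, combined with the entropy bound from the proof of \Cref{prop:optimality-bounded} and the identity $\eps_n^{-d/s}\log n/n=\eps_n^2$. The only differences are that you clip the networks (so strictly you analyze ERM over the clipped class) and use Lemma 4 of \citet{schmidt2020nonparametric} at covering scale $1/n$, whereas the paper cites Theorem 4.1 of \citet{ou2024covering} at scale $\eps_n^2$ with no explicit truncation step. Your side remark that the Lipschitz-in-parameters covering argument would be too crude is mistaken: it gives $N_n^2(L_n')^2\log N_n+N_n^2L_n'\log n\lesssim N_n^2L_n^2\log(nN_nL_n)$ with $L_n'=L_n+L_n\sqrt{\log L_n/\log N_n}$, which is exactly the bound used in \Cref{prop:optimality-bounded}, so the step you flag as the main obstacle is routine.
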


\begin{proof}
Theorem 4.1 of \citet{ou2024covering} gives the oracle inequality
    \begin{align*}
         \E\left[\| \wh f - f_0\|_{L^2(\P_{\bX})}^2\right] \lesssim \eps_n^2 + \frac{1 + \log \Ncal(\eps_n^2, \Fcal_n, \| \cdot \|_\infty)}{n}.
    \end{align*}
    By \Cref{prop:optimality-bounded}, we have 
\begin{align*}
     \log \Ncal(\eps_n^2, \wt \Fcal_n, \|\cdot\|_{\infty}) \lesssim \eps_n^{-d/s} \log(\eps_n^{-1}),
\end{align*}
and it holds
\begin{align*}
    \eps_n^{-d/s} \log(\eps_n^{-1})  =  \eps_n^2 \eps_n^{-(2s + d)/s} \log(\eps_n^{-1})  = n \eps_n^2  \log(\eps_n^{-1}) / \log(n) \lesssim n \eps_n^2.
\end{align*}
Combining the displays and noting $\eps_n^2 \gtrsim 1/n$, the claim follows.
\end{proof}

In case that $f_0$ is $s-$smooth, applying \Cref{thm:main-approx} with $\Mcal = [0, 1]^D$ gives a statistical rate $\eps_n = n^{-2s/(2s + D)} \log(n)^{2s/(2s + D)}$, which matches the minimax optimal rate up to a small $ \log(n)^{2s/(2s + D)}$ factor \citep{stone1982optimal}. Notably, even in this classical setting, our result improves the best known results for fully connected networks by at least a $(\log n)^4$ factor (\cite{jiao2023deep, kohler2021rate, kohler2023estimation}). This improvement is more relevant than it may seem: for example, taking $s=1$ and $D=10$, the sequence $\eps_n' = n^{-2s/(2s + D)} (\log n)^{4}$ increases until $n \ge 10^{10}$, while the rate $\eps_n$ implied by \Cref{thm:main-stat} decreases for all $n \ge 3$.
When the domain $\Mcal$ has Minkowski dimension $d \ll D$, however, we obtain a much faster statistical rate of order $n^{-2s/(2s + d)} \log(n)^{2s/(2s + d)}$, also matching the minimax rate up to log factors.
Several previous works achieve a similar rate (with worse $\log$-factors), but do not apply to deep networks. 

Lastly, combining \Cref{thm:main-stat} with \Cref{thm:main-compositional} 
gives even faster statistical rates for compositional models. The rate $\eps_n = n^{-2s^*/(2s^* + d^*)} \log(n)^{2s^*/(2s^* + d^*)}$ depends only on the pair $(s^*, d^*) \in \Pcal$ attaining the maximal ratio $d^*/s^*$. Especially if the input map $g_1$ is very smooth (e.g., affine) and the composition is sparse, this rate can be way faster than a general model for $g \in \Hcal_{\min_{ij}s_{ij}}(\Mcal, C)$ ignoring this structure. This shows that deep neural networks can adapt efficiently to compositional structures and low intrinsic dimensionality, providing further explanation for their practical success.

\section{Discussion}

A recurring theme in neural network approximation theory is the close alignment between architectural design and structural assumptions on the target function. Feedforward neural networks are inherently compositional objects, and \Cref{lem:error-propagation:1} makes explicit how approximation guarantees for compositional target functions can be obtained by combining approximation bounds for their individual components. From this perspective, depth enables a controlled propagation of approximation errors across hierarchical representations, allowing the overall approximation complexity to be governed by the most challenging components in the composition.

The compositional model class introduced in \Cref{def1} is sufficiently flexible to capture structural properties commonly associated with convolutional architectures. In image classification tasks, target functions are often assumed to exhibit locality and hierarchical aggregation, where decisions are based on features extracted from spatially localized regions and progressively combined across layers. Within our framework, such behavior can be modeled by component functions depending only on subsets of input coordinates, corresponding to spatial locality, while low intrinsic dimensionality reflects the empirical observation that intermediate feature representations concentrate on low-dimensional sets. Similar structural assumptions have previously been used to derive sharp statistical guarantees for convolutional classifiers \citep{KOHLER2025106188}. While our analysis focuses on fully connected networks, it suggests that comparable approximation guarantees may be attainable for convolutional architectures under suitable notions of local compositionality.

A related perspective applies to transformer-based models for language tasks. In this setting, inputs are sequences of tokens rather than spatial grids, and interactions between tokens can span long ranges. Nonetheless, many language understanding tasks appear to admit a compositional structure in which representations are constructed from local token interactions and subsequently refined through increasingly global contextual integration. Empirical studies indicate that intermediate representations in transformer models often concentrate on low-dimensional sets and that only a limited subset of token interactions effectively contributes to each representation \citep{ValerianiDCLAC23}. Extending the present framework to this setting would require replacing spatial locality with appropriate structural constraints on token interactions, providing an interesting direction for future theoretical work.

Overall, many modern learning architectures rely on feedforward networks as core building blocks, and many learning problems exhibit structural properties such as compositionality and low intrinsic dimensionality. The approximation theory developed in this work for flexible-width-and-depth feedforward networks provides a powerful foundation for analyzing such settings. At the same time, the proposed compositional model class offers a way to formalize prediction tasks that combine hierarchical structure with geometric constraints, suggesting several directions for extending approximation and learning guarantees to more specialized architectures.

\bibliography{hcm.bib}

\appendix

\section{Proofs of the main results} \label{apx:proofs} 

Throughout our proofs we frequently make use of the following properties of ReLU networks:

\medskip\noindent\textit{Identity.} We repeatedly use the fact that the identity function on $\R$ can be represented using ReLU activation as
 $       x=\max\{x,0\}-\max\{-x,0\}=\sigma(x)-\sigma(-x) \in \mathcal{F}_{1,1}((2,1),2,1)$.
    Accordingly, for $\bx \in \R^D$,
    \begin{align*}
        \bx=(\sigma(x_1)-\sigma(-x_1), \dots, \sigma(x_1)-\sigma(-x_1))^T =:f_{id}(\bx) \in \mathcal{F}_{D,D}((2D,D),2,1).
    \end{align*}
    
\medskip\noindent\textit{Depth alignment.} We can align the depth of two networks $f \in \mathcal{F}_{d_0, d_f}(N_f, L_f, B)$ and $g \in \mathcal{F}_{d_0, d_g}(N_g, L_g, B)$ with $L_f < L_g$ by using that 
$$f=\underbrace{f_{id} \circ \dots \circ f_{id}}_{L_g - L_f} \circ f \in \mathcal{F}_{d_0, d_f}(N_f \vee 2d_f,  L_g, B).$$

\medskip\noindent\textit{Parallelization.} After depth alignment, we have 
        $(f,g) \in \mathcal{F}_{d_0, d_f+d_g}((N_{f} \vee 2d_L)+N_g, L_g, B)$.
        
\medskip\noindent\textit{Composition.} Composing $f$ and $g$ leads to $f \circ g \in \mathcal{F}_{d_0, d_f}(\max\{N_g, N_f\}, L_f + L_g, B)$. Here we do not need to apply depth alignment before.

\subsection{Proof of \Cref{thm:main-approx}}
The affine rescaling $A(\bx) = \left((x_j - a_j)/(b_j - a_j)\right)_{j=1}^D$ can be implemented exactly by a zero-hidden-layer network. Since this transformation introduces no approximation error, and since approximating $f / C$ with error $\eps$ is equivalent to approximating $f$ with error $C \eps$, we may assume without loss of generality that $f \in \Hcal_s([0, 1]^D, 1)$. The proof proceeds in four steps. 

\begin{enumerate}
    \item We partition $[0,1]^D$ in a regular grid of cubes 
    \begin{align*}
              Q_{\bbeta} = \left[ \frac{\beta_i}{K}, \frac{\beta_{i} +1}{K}\right]_{i = 1}^D, \quad \bbeta \in \{ 0, \dots, K - 1 \}^D
          \end{align*}
          of side length $1/K$. We choose $K \sim (N^2 L^2 )^{1/d}  \sim \eps^{-1/s}$ so that $\mathcal{M} $ is covered by at most $K^{d} \sim  \eps^{-d/s}$ such cubes. For $\bx \in Q_{\bbeta}$, we denote by $\bx_{\bbeta} = \bbeta / K$ the lower left corner of the cube containing $\bx$ and let $\bx_{\bbeta}'$ be an element on the line segment between $\bx$ and $\bx_{\bbeta}$. By Taylor's theorem, we have
          \begin{align*}
              f(\bx) = \sum_{\substack{\|\balpha\|_1 \leq \lfloor s \rfloor}} \frac{\partial^{\balpha} f(\bx_{\bbeta})}{\balpha!} (\bx - \bx_{\bbeta})^{\balpha} + \sum_{\substack{\|\balpha\|_1 = \lfloor s \rfloor }} \frac{\partial^{\balpha} f(\bx_{\bbeta}') - \partial^{\balpha} f(\bx_{\bbeta})}{\balpha!} (\bx - \bx_{\bbeta})^{\balpha}.
          \end{align*}
          In turn, approximating $f(\bx)$ reduces to approximating the truncated Taylor polynomial on each cube, with a remainder term controlled by the Hölder smoothness of $f$.
    \item We now approximate the truncated Taylor sum by a neural network.
          \begin{itemize}
              \item[(i)] By \Cref{lem:poly-approx}, for every $\balpha=(\alpha_1, \dots, \alpha_D)^T$ there is a neural network
                  \begin{align*}
                      \phi_{P, \balpha} \in \wt \Fcal_{D, 1}(N, L, 1),
                  \end{align*}
                  such that 
                  \begin{align*}
                      \sup_{\bx \in [0, 1]^D}| \phi_{P, \balpha}(\bx) - x_1^{\alpha_1} \cdots x_D^{\alpha_D} |\lesssim N^{-7sL} \lesssim (N^2 L^2)^{-s/d} \lesssim \eps.
                  \end{align*}

              \item[(ii)] Set $\delta = \eps / 4$. By \Cref{lem:stepfun-2}, there exists a network
                  \begin{align*}
                      \phi_{\beta} \in \wt \Fcal_{1, 1}(N, L,N),
                  \end{align*}
                  such that
                  \begin{align*}
                      \phi_{\beta}(x) = \beta / K  \quad \text{for } x \in \left[\frac{\beta}{K}, \frac{ \beta + 1}{K}- \delta \ind_{\beta \le K - 2}\right], \quad \beta = 0, \dots K - 1,
                  \end{align*}
                  and $\phi_\beta(x) = 0$ for $x \le 0$ and $\phi_\beta(x) = (K - 1) / K$ for $x \ge 1$.
                  By parallelizing these networks, i.e., computing $(\phi_{\beta_1}, \dots, \phi_{\beta_D})$, we construct a network
                  \begin{align*}
                      \phi_{\bbeta} \in \wt \Fcal_{D, 1}(N, L,  N),  
                  \end{align*}
                  such that
                  \begin{align*}
                      \phi_{\bbeta}(\bx) = \bx_{\bbeta} \quad \text{for } \bx \in \left[\frac{\beta_i}{K}, \frac{ \beta_i + 1}{K}- \delta \ind_{\beta_i \le K - 2}\right]_{i = 1}^D = [0,1]^D \setminus \Omega_{K, \delta},
                  \end{align*}
                  where
                  \begin{align}
                      \Omega_{K, \delta} = \bigcup_{j = 1}^D \left\{\bx \in [0,1]^d\colon x_j \in \bigcup_{k = 1}^{K - 1} \left(\frac{k}{K} - \delta,  \frac{k}{K}\right)\right\}. \label{eq:omega}
                  \end{align}

              \item[(iii)]
                  Define $\xi_{\bbeta, \balpha} = \partial^{\balpha} f(\bbeta/K) / \balpha!$ for all $\bbeta \in \{0, \dots, K - 1\}^D$ such that $Q_{\bbeta} \cap \Mcal \neq \emptyset$.
                  We want to approximate these values by a neural network up to accuracy $\eps$ using \Cref{prop:point-fitting-ours}.
                  Set
                  \begin{align*}
                      \wt r &= \lceil \log(\eps^{-1}) \rceil \asymp \log(NL), \\
                      \wt J      & \asymp \eps^{-d/s} \asymp  N^2 L^2,         \\
                      \wt \delta & \asymp \eps^{1/s} \asymp ( N^2 L^2)^{-1/d}, \\
                      \log(\wt R) & = \log(2D \wt J^2/\wt \delta)\asymp \log(NL).
                  \end{align*}
                  Observe that $\log(\wt \delta^{-1}) + \wt r \lesssim \log(NL)$ and $\log (NL) / \sqrt{\log N \log (NL)} \lesssim 1 + \sqrt{\log L / \log N}$.
                  Then according to \Cref{prop:point-fitting-ours}, there exists a neural network
                  \begin{align*}
                      \phi_{\xi, \balpha} \in \wt \Fcal_{D, 1}\left(N,  L+ L\sqrt{\log L / \log N}, N + 2^{(\wt r + \log(\wt R)) / L} \right),
                  \end{align*}
                  such that
                  \begin{align*}
                      \phi_{\xi, \balpha}(\bx_{\bbeta}) = 2^{-\wt r}\lfloor 2^{\wt r} \xi_{\bbeta, \balpha} \rfloor  \quad \text{for every } \bbeta \text{ s.t. } Q_{\bbeta} \cap \Mcal \neq \emptyset.
                  \end{align*}
                  Which implies
                  \begin{align*}
                      |\phi_{\xi, \balpha}(\bx_{\bbeta}) - \xi_{\bbeta, \balpha}  | \lesssim 2^{- \wt r} = \eps \quad \text{for every } \bbeta \text{ s.t. } Q_{\bbeta} \cap \Mcal \neq \emptyset.
                  \end{align*}
                  Since 
          \begin{align*}
            \log(\wt R) / L \lesssim \log(NL) / L \lesssim \log N,
          \end{align*}
          we have 
          \begin{align*}
            \phi_{\xi, \balpha} \in \wt \Fcal_{D, 1}\left(N,  L+ L\sqrt{\log L / \log N}, N  \right).
          \end{align*}
          \end{itemize}
          
    \item Combining the above, we construct a neural network
          \begin{align*}
              \tilde \phi \in \wt \Fcal_{D, 1}\left(N,  L+ L\sqrt{\log L / \log N} , N \right),
          \end{align*}
          defined as 
          \begin{align*}
              \tilde \phi(\bx) = \sum_{\substack{\|\balpha\|_1 \leq \lfloor s \rfloor}} \phi_{P, (1, 1)}( \phi_{\xi, \balpha}(\bx_{\bbeta}) , \phi_{P, \balpha}(\bx - \phi_{\bbeta}(\bx))).
          \end{align*}
          By \Cref{lem:error-propagation:1} and the error bounds on the individual networks, we have
          \begin{align*}
              \sup_{\bx \in \Mcal_K \setminus \Omega_{K, \delta}} | f(\bx) - \tilde \phi (\bx)| \lesssim \eps,
          \end{align*}
          where $\Mcal_K = \bigcup_{\bbeta\colon Q_{\bbeta} \cap \Mcal \neq \emptyset} Q_{\bbeta} $.
    \item Finally, \Cref{prop:median-smoothing-nn} implies that there is $\phi \in \wt \Fcal_{D, 1}\left(N, L+ L\sqrt{\log L / \log N}, N \right)$ such that
          \begin{align*}
              \sup_{\bx \in \Mcal_K} | f(\bx) -  \phi (\bx)| \lesssim \eps.
          \end{align*}
          The final claim follows since $\Mcal \subset \Mcal_K$. 
\end{enumerate}

\subsection{Proof of \Cref{prop:optimality}}

We start with an auxiliary lemma used for the first part of the proposition.
The following is a modified version of Proposition 3.1 of \citet{ou2024covering}.
\begin{lemma} \label{lem:covering-lower-bound}
    Let $\Gcal$ be a class of functions and $\Scal$ be a collection of sets $S \subseteq [0, 1]^D$ such that
    \begin{align*}
        \inf_{f, g \in \Gcal, f \neq g} \sup_{S \in \Scal} \|f - g\|_{S} \ge 3\eps.
    \end{align*}
    Then any function class $\Fcal$ with
    \begin{align*}
        \sup_{g \in \Gcal}\sup_{S \in \Scal} \inf_{f \in \Fcal} \|f - g\|_{S} \le \eps,
    \end{align*}
    must satisfy $\Ncal(\eps, \Fcal, \|\cdot\|_\infty) \ge |\Gcal|$.
\end{lemma}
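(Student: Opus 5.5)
The plan is a packing-versus-covering argument in the spirit of Proposition~3.1 of \citet{ou2024covering}. The separation hypothesis forces the approximants of distinct elements of $\Gcal$ to lie in distinct elements of any $\eps$-cover of $\Fcal$, which gives an injection $\Gcal \to \{V_1,\dots,V_M\}$ and hence $M \ge |\Gcal|$. I do not yet have a complete argument: the counting works cleanly only for a single witnessing set, and extending it to a general collection $\Scal$ is the open step.

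\emph{Reading of the cover.} I would fix an $\eps$-cover $\{V_1, \dots, V_M\}$ of $\Fcal$ of minimal size $M = \Ncal(\eps, \Fcal, \|\cdot\|_\infty)$, following the definition of covering number in the preliminaries. I would read each cell as gathering functions that are uniformly close to a common representative. The key quantitative input is how close two members of the same cell are in $\|\cdot\|_\infty$, hence on every $S \subseteq [0,1]^D$. For the counting to work, this must be strictly below the gap of $\eps$ derived next, which is what the constant $3$ in the hypothesis is calibrated against.

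\emph{Separation transfers to approximants.} Take $g \neq g'$ in $\Gcal$ and any $\eta > 0$. By the separation hypothesis there is $S = S_{g,g'} \in \Scal$ with $\|g - g'\|_S \ge 3\eps - \eta$. By the approximation hypothesis there are $f_{g,S}, f_{g',S} \in \Fcal$ with $\|f_{g,S} - g\|_S \le \eps$ and $\|f_{g',S} - g'\|_S \le \eps$. The triangle inequality on $S$ then gives
\[
\|f_{g,S} - f_{g',S}\|_\infty \ge \|f_{g,S} - f_{g',S}\|_S \ge \eps - \eta .
\]
So on the witnessing set, approximants of distinct elements of $\Gcal$ are essentially $\eps$-separated. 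By the calibration above, they cannot share a cover cell, and letting $\eta \to 0$ removes the slack.

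\emph{Assigning cells: the main obstacle.} The remaining task is to build a well-defined injection $g \mapsto V_{i(g)}$, and this is where the difficulty sits. The approximant $f_{g,S}$ depends on $S$, and the witnessing set $S_{g,g'}$ depends on the pair. When $\Scal$ has a single element the argument closes: set $i(g)$ to be a cell containing $f_{g,S}$, and injectivity follows from the previous step. For general $\Scal$ I would first try a pigeonhole argument. Suppose $M < |\Gcal|$. For each fixed $S$, the map $g \mapsto \text{cell}(f_{g,S})$ has a collision $g_S \neq g'_S$. One would then like to find a single pair colliding for its own witnessing set $S_{g,g'}$, which would contradict the previous step. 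Proving that such a pair exists is the step I cannot yet guarantee and where the proof needs a genuine further idea. The fallback is a reduction guaranteeing one set $S \in \Scal$ that witnesses all pairwise separations at once. That is the situation in the application to \Cref{prop:optimality}(i), where one would construct $\Gcal$ as bump functions on a single low-dimensional set $\Mcal$ and take $\Scal = \{\Mcal\}$.
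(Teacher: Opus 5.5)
There is a genuine gap, and no argument can close it, because the statement is false as written. The paper's own proof breaks at the same two points as yours. First, the general-$\Scal$ step you flag. Since $\sup_{S\in\Scal}\|h\|_S=\|h\|_U$ with $U=\bigcup_{S\in\Scal}S$, the collection $\Scal$ only matters because the second hypothesis lets the approximant depend on $S$, and that freedom destroys the conclusion. Take distinct $p_1,\dots,p_m\in[0,1]^D$ with $m\ge 2$, $\Scal=\{\{p_1\},\dots,\{p_m\}\}$, and $A\ge 3$. Let $g_\sigma$ be any functions with $g_\sigma(p_k)=A\eps\,\sigma_k$ for $\sigma\in\{0,1\}^m$, and let $\Fcal=\{0,A\eps\}$ (constant functions). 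Distinct $g_\sigma$ differ by $A\eps$ at some $p_k$, and every $g_\sigma$ is matched exactly on each single $S$. Yet $\Ncal(\eps,\Fcal,\|\cdot\|_\infty)\le 2<2^m=|\Gcal|$. Here the approximants are forced, and no pair ever collides on its own witnessing set, so your pigeonhole search has nothing to find. The paper's proof hides exactly this. It claims each $g$ has one center $f_i$ with $\sup_S\|g-f_i\|_S\le\sup_S\inf_{f}\|f-g\|_S+\eps$, but the index $i$ its argument produces depends on $S$. So it silently replaces $\sup_S\inf_f$ by $\inf_f\sup_S$, which is the interchange you rightly refused to make. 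Your fallback is the correct repair: assume $\sup_{g\in\Gcal}\inf_{f\in\Fcal}\|f-g\|_U\le\eps$ for a single set $U$. For the application, this means one set $\Mcal$ with $\dim_M(\Mcal)=d$ on which all of $\Gcal$ is separated.

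Second, your single-set case does not close with the constant $3$ either. The triangle inequality only gives $\|f_{g,S}-f_{g',S}\|_\infty\ge\eps-\eta$. But two members of one cell of an $\eps$-cover are both within $\eps$ of a common center, so they can be $2\eps$ apart and may share a cell. Letting $\eta\to0$ cannot manufacture the strict gap you need. Concretely, $\Scal=\{[0,1]^D\}$, $\Gcal=\{0,3\eps\}$ and $\Fcal=\{\eps,2\eps\}$ (constants) satisfy both hypotheses. However, $\{\eps\}$ is already an $\eps$-cover of $\Fcal$, so $\Ncal(\eps,\Fcal,\|\cdot\|_\infty)=1<2$. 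The paper's final sentence has the same defect. Its centers are within $2\eps$ of the elements of $\Gcal$, so two elements sharing a center are only forced to be within $4\eps$ of each other, which does not contradict $3\eps$-separation. A correct version assumes $\inf_{g\neq g'}\|g-g'\|_U>4\eps$. Then choose $f_g\in\Fcal$ with $\|f_g-g\|_U\le\eps+\eta$, with $\eta$ small enough that $4\eps+2\eta$ stays below the separation, and a cell containing $f_g$; this gives an injection from $\Gcal$ into the cells. The constant $4$ is sharp: take $\Gcal=\{0,4\eps\}$ and $\Fcal=\{\eps,2\eps,3\eps\}$, which the single center $2\eps$ covers. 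In the application the constant is absorbed in $\lesssim$; the quantifier order is not.
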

\begin{proof}
    We prove this by contradiction. Suppose there exist $f_1, \dots, f_N \in \Fcal$ with $N < |\Gcal|$ such that for any $f \in \Fcal$, there exists an $i$ with $\|f - f_i\|_\infty \le \eps$.
    Then for every $g \in \Gcal$, there is an $i \in \{1, \dots, N\}$ such that
    \begin{align*}
        \sup_{S \in \Scal} \|g - f_i\|_{S} \le \sup_{S \in \Scal} \inf_{f \in \Fcal} \|f - g\|_{S} + \eps  \le 2\eps.
    \end{align*}
    This implies that $f_{1}, \dots, f_{N}$ are centers of an $\eps$-covering of $\Gcal$ with respect to the metric $\sup_{S \in \Scal} \|\cdot\|_{S}$. This is a contradiction since $N < |\Gcal|$ and all elements in $\Gcal$ are at least $2\eps$ apart in this metric.
\end{proof}

We proceed to the main proof.

\begin{enumerate}[(i)]
    \item We construct a set $\Gcal \subset \Hcal_s[0, 1]^D, C)$  satisfying
          \begin{align*}
              \inf_{f, g \in \Gcal, f \neq g} \sup_{\dim_M(\Mcal) \le d}\| f- g\|_{\Mcal} \gtrsim \eps, \quad \log |\Gcal| \gtrsim \eps^{-d/s} \log(\eps^{-1}).
          \end{align*}
          The lower bound then follows from \Cref{lem:covering-lower-bound} above.

          For $a \in (0, \infty)$, define the polynomial bump $\phi\colon [0, 1]^D \to \R$ as
          \begin{align*}
              \phi(x) = a\prod_{j=1}^D   x_j^{s+1}(1-x_j)^{s+1}.
          \end{align*}
          By straightforward computations, we can verify that $\phi\in \Hcal_s([0, 1]^D, aC')$ for some $C' < \infty$, and all derivatives up to order $\lfloor  s \rfloor$ vanish on the faces of $[0, 1]^D$.
          Next, let $K = \lceil \eps^{-1/s}\rceil$ and define half-open cubes
          \begin{align*}
              Q_{\bbeta} = \prod_{i=1}^D \biggl[\frac{\beta_i}{K}, \frac{\beta_i + 1}{K}\biggr), \qquad \bbeta \in \{0, \dots, K-1\}^D.
          \end{align*}
          For any $\bbeta$, define
          \begin{align*}
              \phi_{\bbeta, K}(\bx) = K^{-s}\phi(K(\bx - \bbeta/K)), \qquad \bx \in [0, 1]^D.
          \end{align*}
          Then for every multi-index $\balpha$ with $|\balpha| \le \lfloor s \rfloor $,
          \begin{align*}
              \| \partial^{\balpha} \phi_{\bbeta, K} \|_\infty \asymp aK^{|\balpha| - s} \le a,
          \end{align*}
          and for $|\balpha| = \lfloor s \rfloor$ and $\bx, \by \in [0, 1]^D$,
          \begin{align*}
              |\partial^{\balpha} \phi_{\bbeta, K}(\bx) - \partial^{\balpha} \phi_{\bbeta, K}(\by)|
               & \lesssim a K^{\lfloor s \rfloor - s} \|K(\bx - \by)\|_\infty^{s - \lfloor s \rfloor} \le a \|\bx - \by\|_\infty^{s - \lfloor s \rfloor}.
          \end{align*}

          For any function $\sigma\colon \{0, \dots, K-1\}^D \to \{0, 1\}$, define
          \begin{align*}
              f_{\bm \sigma}(\bx) = \sum_{\bbeta \in \{0, \dots, K-1\}^D} \sigma(\bbeta) \phi_{\bbeta, K}(\bx), \qquad \bx \in [0, 1]^D.
          \end{align*}
          Further, for any $\Bcal \subset \{0, \dots, K - 1\}^D$, define
          \begin{align*}
              \Sigma_{\Bcal} = \left\{\sigma\colon \{0, \dots, K-1\}^D \to \{0, 1\} \colon \sigma(\bbeta) = 0 \text{ for } \bbeta \notin \Bcal\right\}, \quad \Sigma = \bigcup_{|\Bcal| \le \eps^{-d/s}} \Sigma_{\Bcal}.
          \end{align*}
          Choosing $a$ small enough, we have  $\Gcal = \{f_{\bm \sigma} \colon \sigma \in \Sigma\} \subset \Hcal_s([0, 1]^D, C)$.

          for any $\sigma \neq \sigma' \in \Sigma$, it holds
          that
          \begin{align*}
              \sup_{\dim_M(\Mcal) \le d} \| f-g\|_{\Mcal} \ge K^{-s} \|\phi\|_\infty \gtrsim \eps.
          \end{align*}
          Finally, by standard combinatorial arguments,
          \begin{align*}
              \log |\Gcal| = \log |\Sigma| \ge \log \binom{K^D}{\lfloor \eps^{-d/s} \rfloor} \gtrsim \eps^{-d/s} \log(\eps^{-1}).
          \end{align*}

    \item Cover $\Mcal$ by $N = \Ncal(\eps^{1/s}, \Mcal, \|\cdot\|_\infty) \lesssim \eps^{-d / s}$ hypercubes $Q_1, \dots, Q_N$ of side-length $\eps^{1/s}$. For each hypercube $Q_k$, let $\bx_k$ be its center and define the function class
          \begin{align*}
              \Fcal_\Mcal = \left\{ f \colon f(\bx) = \sum_{k = 1}^N \ind_{Q_k}(\bx) \sum_{|\balpha| \le \lfloor s \rfloor } c_{k, \balpha} (\bx - \bx_k)^{\balpha} , c_{k, \balpha} \in \R, |c_{k, \balpha}| \le C \right\}.
          \end{align*}
          Then, by standard Taylor approximation arguments, for any $g \in \Hcal_s([0, 1]^D, C)$ there is an $f \in \Fcal_\Mcal$ such that
          \begin{align*}
              \sup_{\bx \in \Mcal} |g(\bx) - f(\bx)| \lesssim \eps.
          \end{align*}
          Moreover, the number of parameters in $\Fcal_\Mcal$ is at most $N \cdot \binom{D + \lfloor s \rfloor}{\lfloor s \rfloor} \lesssim \eps^{-d/s}$, which implies the desired bound on the covering number by Theorem 2.7.17 of \citet{van2023weak}.
\end{enumerate}

\subsection{Proof of \Cref{prop:optimality-bounded}}

Theorem 2.1  \citet{ou2024covering} implies that the covering number of 
$\Fcal_{D, 1}(N, L, B)$ satisfies
\begin{align*}
       \log \Ncal(\eps, \Fcal_{D, 1}(N, L, B), \|\cdot\|_\infty)     \lesssim N^2 L\log((N + 1)^LB^L / \eps).
\end{align*}
Therefore,
\begin{align*}
     & \quad \; \log \Ncal(\eps, \Fcal_{D, 1}(N, L + L \sqrt{\log L / \log N}, N^c), \|\cdot\|_\infty)            \\
     & \lesssim N^2 L^2 (1 + \log L / \log N) \log (N) +  N^2 L (1 + \sqrt{\log L / \log N}) \log(\eps^{-1}) \\
     & \lesssim N^2 L^2 \log (NL) +  N^2 L^2 \log(\eps^{-1}).
\end{align*}
Combined with \Cref{prop:optimality}, we have
\begin{align*}
    N^2 L^2 \log (NL) +  N^2 L^2 \log(\eps^{-1}) \gtrsim \eps^{-d/s} \log(\eps^{-1}),
\end{align*}
which implies the claim.

\subsection{Proof of \Cref{prop:point-fitting-ours}}

    By Lemma 13 of \citet{park2021provable} there is a unit vector $\bu \in \R^D$ such that for all $i \neq j$,
    \begin{align*}
        \sqrt{\frac{8}{\pi D}} \frac{1}{J^2} \| \bx_i - \bx_j\| \le |\bu^\top (\bx_i - \bx_j) | \le \| \bx_i - \bx_j\|.
    \end{align*}
    Defining $\wt \bu = \bu / \sqrt{D}$, we have
    \begin{align*}
        |\wt \bu^\top (\bx_i - \bx_j) | \ge  \sqrt{\frac{8}{\pi D^2}} \frac{1}{J^2} \ge R^{-1} \qquad \text{and} \qquad |\wt \bu^\top \bx_i| \le  1.
    \end{align*}
    Define $\phi_z \in \wt \Fcal_{D, 1}(1, 1, 1)$ by
    \begin{align*}
        \phi_z(\bx) = [1 + \sigma(\wt \bu^\top \bx) + \sigma(-\wt \bu^\top \bx)]/2 = (1 + \wt \bu^\top \bx)/2 \in [0, 1].
    \end{align*}
    Define $z_1 =  \phi_z(\bx_1), \dots, z_J =  \phi_z(\bx_J)$. We may assume without loss of generality that they are in ascending order, otherwise reorder them.
    Let $s = \log(R) + 1$.
    Since $| z_i - z_j| \ge R^{-1}/2 \ge 2^{-s}$ for $i \neq j$,  we can apply \Cref{lem:point-fitting-ours} below to construct a network $\phi_m \in \wt \Fcal_{1, 1}(N,  L + L [\sqrt{\log L} +  (s + r)  / \sqrt{\log (NL)}] / \sqrt{\log N}, N + 2^{(r + \log(R))/L})$ such that
    \begin{align*}
        \phi_m(z_j) = y_j \quad \text{for } j = 1, \dots, J.
    \end{align*}
    The final network is $\phi = \phi_m \circ \phi_z.$
    Since $\log(R) \lesssim \log(NL) + \log(\delta^{-1})$, and
    \begin{align*}
        \frac{\log(NL)}{\sqrt{\log N \log (NL)}} = \frac{\sqrt{\log(NL)}}{\sqrt{\log N}}  \lesssim 1 + \frac{\sqrt{\log L}}{\sqrt{\log N}},
    \end{align*}
    the proof is complete.

\begin{lemma} \label{lem:point-fitting-ours}
    Let $N, L, s \in \N$ with and suppose we are given numbers $x_1< x_2 < \dots < x_{J} \in [0, 1)$ with and $y_1, \dots, y_J \in \{0, \dots, 2^{r} -1\}$, where $|x_i - x_j| \ge 2^{-s}$ and $J \le N^2 L^2 $. Then there is a neural network $$\phi \in \wt \Fcal_{1, 1}(N, L + L [\sqrt{\log L} +  (s + r)  / \sqrt{\log (NL)}] / \sqrt{\log N}, N + 2^{(s + r) / L})$$ such that
    \begin{align*}
        \phi(x_j) = y_j \quad \text{for } j = 1, \dots, J.
    \end{align*}
\end{lemma}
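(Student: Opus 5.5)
We build the network in two stages: a coarse localization step that reduces the problem to $O(N^2L^2)$ small memorization problems, and a bit-extraction step that solves each small problem. Concretely, partition the $J$ sorted points into $M \asymp N^2$ consecutive blocks of at most $m \asymp L^2$ points each. Let the block boundaries $t_1<\dots<t_M$ lie strictly between consecutive points; this is possible because the gaps are at least $2^{-s}$.

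\emph{Step 1 (block index map).} Construct a piecewise linear map $\psi$ that sends every $x$ in the $k$-th block to the constant $k$, or to a rescaled version of it. This map has $O(N^2)$ breakpoints. Using the standard width-$N$, depth-$O(1)$ construction of step functions with $N^2$ pieces (the analogue of \Cref{lem:stepfun-2}), we can implement $\psi$ exactly away from the gaps. The transition regions can be chosen of width $2^{-s-1}$, which keeps the weights of order $2^{s}$. Alternatively, we can spread this scale over $L$ layers, giving weights of order $2^{s/L}$ per layer.

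\emph{Step 2 (storing the block data).} For block $k$, define two numbers $u_k, w_k$. The number $u_k$ concatenates, in a reduced-alphabet ternary expansion as in \citet{ou2024three}, the $s$ binary digits of every $x_j$ in block $k$. The number $w_k$ concatenates the corresponding $r$-bit labels $y_j$. Each expansion has length $m(s+r) \asymp L^2(s+r)$. A second step-type network with $N^2$ pieces maps $k \mapsto (u_k, w_k)$; piecewise linear interpolation through $M$ prescribed values is again implementable with width $N$ and depth $O(1)$.

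\emph{Step 3 (decoding and matching).} Apply \Cref{lem:bit-decode} to $u_k$ and $w_k$, which sequentially extracts the stored $x$-values and labels. The decoding proceeds in chunks: each layer of width $N$ extracts about $\sqrt{\log N \log(NL)}$ digits. Extracting $L^2(s+r)$ digits with width $N$ therefore costs depth $L + L\sqrt{\log L}/\sqrt{\log N} + L(s+r)/\sqrt{\log N\log(NL)}$, matching the target. The total weight budget $2^{s+r}$ is spread multiplicatively over $L$ layers, which yields weights of size $N + 2^{(s+r)/L}$.

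For each decoded candidate $x_k$, compute an indicator that $|x-x_k| < 2^{-s-1}$. This is a trapezoid bump built from four ReLUs with slope $2^{s}$, again factored across layers. Multiply the indicator by the decoded $y_k$; since $y_k \le 2^r$, the product can be formed exactly as $\min(y_k, 2^r\cdot\text{indicator})$. Accumulate the results in a running-sum channel. Because the points are $2^{-s}$-separated, exactly one candidate matches, so the output equals $y_j$ exactly.

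The main obstacle is Step 3. The comparison of $x$ with $x_k$ must be exact even though $x_k$ is reconstructed only from its first $s$ bits. To handle this, we replace each $x_j$ by its truncation $\lfloor 2^{s}x_j\rfloor 2^{-s}$ and compare against $\lfloor 2^{s}x\rfloor$. The latter is obtained for free inside the digit-extraction routine applied to $x$ itself; as a fallback, we apply that routine directly to $x$ in parallel. With this replacement the match becomes an equality of integers, detected by a bump of unit width. The remaining difficulty is keeping the per-layer weights bounded by $N + 2^{(s+r)/L}$ throughout, and for this we follow the bookkeeping of \citet{ou2024three}.
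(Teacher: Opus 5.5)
Your architecture matches the paper's: localize to a block, store the block's points and labels in two reduced-alphabet ternary numbers, decode them sequentially with \Cref{lem:bit-decode}, then match with a bump and accumulate. However, your block sizes break the depth bound, and the depth count in Step 3 is wrong. With $M\asymp N^2$ blocks, every block contains $\asymp L^2$ points whose digits are decoded one after another. At width $3^n\le N$, \Cref{lem:bit-decode} extracts only $n=\lfloor\log_3 N\rfloor$ digits per layer. The rate $\sqrt{\log N\log(NL)}$ you state is at least $\log N$, and it is not what the lemma provides at width $N$. So one block costs depth of order $L^2(s+r)/\log N$, and at least $L^2$ if you process one candidate per iteration. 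Even with your stated rate, dividing $L^2(s+r)$ digits gives $L^2(s+r)/\sqrt{\log N\log(NL)}$, a factor $L$ above the target term $L(s+r)/\sqrt{\log N\log(NL)}$. Concretely, for fixed $N,s,r$ and $L\to\infty$ your network has depth $\asymp L^2$, while the statement allows $\asymp L\sqrt{\log L}$.

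The missing idea is to let the localization step use depth as well, and to balance it against the decoder. \Cref{lem:pw-linear} with width $N$ and depth $L_1$ interpolates about $N^2L_1$ breakpoints. The paper takes $L_1=L\sqrt{\log(NL)/n}$, which gives $M=N^2L\sqrt{\log(NL)/n}$ blocks of only $L'=L\sqrt{n/\log(NL)}$ points each. Localization then costs depth $L_1\lesssim L+L\sqrt{\log L/\log N}$. Decoding costs $L'\lceil c/n\rceil\lesssim L+L(s+r)/\sqrt{\log N\log(NL)}$ with $c=\max\{r,s+2\}$. This balance is exactly where the square-root factors in the statement come from. Your Step 1 at depth $O(1)$ also violates the weight bound: the weights of \Cref{lem:pw-linear} at depth $L_1$ contain the factor $\delta^{-4/L_1}=2^{4s/L_1}$. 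Once you spread this over $\gtrsim L$ layers, as you suggest, that same depth buys the missing factor $L$ in the number of pieces.

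Your remedy for the matching step also does not work as described. \Cref{lem:bit-decode} applies only to inputs of the form $\sum_k\theta_k3^{-k}$, not to the raw $x$. A generic step network for $\lfloor 2^s x\rfloor$ (as in \Cref{lem:stepfun-2}) is exact only away from small neighbourhoods of the dyadic grid, and nothing prevents data points from lying there. The remedy is also unnecessary. Store $s+2$ bits, so that $|x_j-\hat x_j|\le 2^{-(s+2)}$ while $|x_j-\hat x_k|>2^{-(s+1)}$ for $k\ne j$. Then use a trapezoid of height $2^{-s}$ with plateau $[-2^{-(s+2)},2^{-(s+2)}]$ and support in $(-2^{-(s+1)},2^{-(s+1)})$, which has $O(1)$ slopes. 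Finally, carry $2^{-(s+r)}y$ through the decoder and multiply by $2^{(s+r)/L}$ in each of $L$ final layers; this keeps all weights within $N+2^{(s+r)/L}$.
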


\begin{proof} Without loss of generality we assume $J = N^2 L^2$, otherwise add arbitrary pairs $(x_j, y_j)$ to the set. Further let $n = \lfloor \log_3 N \rfloor$ and assume for simplicity that $L' =  L  \sqrt{n /\log (NL) } $  and  $M =N^2 L  \sqrt{\log (NL) / n}  $ are integers.
    \begin{enumerate}
        \item  Observe that $ J= ML'$. Construct $M$ disjoint intervals $I_m = [x_{(m - 1)L' + 1}, x_{mL'}], m = 1, \dots, M$, each containing exactly $L'$ of the $x_j$. 
        Let $s' = s + 2$, define $\hat x_j = \sum_{i=1}^{s'} 2^{-i} \bit_i( x_j)$, and observe that $|\hat x_j - x_j| \le 2^{-s'}$ and, for $j \neq k$,
        \begin{align*}
            | x_j - \hat x_k| \ge |x_j - x_k| - |x_k - \hat x_k| \ge 2^{-s}  - 2^{-s'} > 2^{-(s + 1)}.
        \end{align*}
        
        \item Let $c = \max\{r, s'\}$. For $ m = 1, \dots, M,$ define the numbers 
        \begin{align*}
            u_m &= \sum_{j=1}^{L'} \sum_{i=1}^{c} \bit_{i}(\hat x_{(m-1)L' + j}) 3^{-(j - 1)c + i}, \\ 
            v_m &= \sum_{j=1}^{L'} \sum_{i=1}^{c} \bit_{i}(y_{(m-1)L' + j} 2^{-r}) 3^{-(j - 1)c + i}.
        \end{align*}
        The number $u_m$ encodes all the digits of $\hat x_{(m - 1)L' + 1},\dots, \hat x_{mL' }$ and the number $v_m$ encodes all the digits of $y_{(m - 1)L' + 1},\dots, y_{mL' }$.
        We apply \Cref{lem:pw-linear} with $\bar y = 1$, $\delta = 2^{-s}$ to construct  neural networks $\phi_{u}, \phi_{w} \in \wt \Fcal_{1, 1}(N, L  \sqrt{\log (NL) / n}, (2^{4s}M^6)^{1/L})$ such that
        \begin{align*}
                  \phi_{u}(x) = u_m, \quad \phi_{w}(x) = w_m \quad \text{for } x \in I_m.
        \end{align*}
        Choosing $L \ge 4s$ ensures that $(2^{4s}M^6)^{1/L} \le 2 N (NL)^{12/L}$. For $L \ge 12$, we have have $(NL)^{12/L} \le 12N$ and therefore $\phi_u, \phi_w \in \wt \Fcal_{1, 1}(N, L  \sqrt{\log (NL) / n}, N)$.

        \item We next construct a decoder network that decodes $y_j$ from $x_j, u_m, w_m$ for $x_j \in I_m$.
        This networks implements an iterative procedure, decoding $\hat x_j$ and $y_j$ from $u_m$, checking whether $\hat x_j$ matches $x_j$, and outputting $y_j$ if they match, and $0$ otherwise.

        We illustrate the main mechanism for the first step of the iteration.
        Apply \Cref{lem:bit-decode} to construct a network $   \psi = (\psi_{1}, \psi_{2}) \in \wt \Fcal_{1, 2}(3^n, \lceil c / n \rceil, 3^n)$ such that
        \begin{align*}
              \psi(x) =  \left(\sum_{i = 1}^{c} \bit_{i}(x)2^{-i}, \sum_{i = 1}^{\infty} \bit_{c + i}(x)3^{-i}\right).
        \end{align*}
        Denote $\psi_{2}^{(k)}$ the $k$-th iteration of applying $\psi_{2}$, i.e.,   
        \begin{align*}
            \psi_{2}^{(1)}(x) & = \psi_{2}(x), \quad \psi_{2}^{(k)}(x) = \psi_{2}(\psi_{2}^{(k - 1)}(x)), \quad k = 2, 3, \dots,
        \end{align*}
        and observe that for $ k = 1, \dots, L'$, it holds that
        \begin{align*}
            \psi_{1} \circ \psi_{2}^{(k - 1)}(u_m) &= \hat x_{(m - 1)L' + k}, \qquad
            \psi_{1} \circ \psi_{2}^{(k - 1)}(w_m) =  2^{-r}y_{(m - 1)L' + k}.
        \end{align*}
        
        Next define a piecewise linear function $\phi_{\ind}(x)$ interpolating the points 
        $$(-2, 0), (-2^{-(s+1)}, 0), (-2^{-(s+2)}, 2^{-s}), (2^{-(s+2)}, 2^{-s}), (2^{-(s+1)}, 0), (2, 0).$$
        Then for any $j, k$,  it holds
        \begin{align*}
            \phi_{\ind}(x_j - \hat x_k) =  2^{-s}\ind_{j = k},
        \end{align*}
        and, by \Cref{lem:pw-linear}, we can realize this function by $\phi_{\ind} \in \wt \Fcal_{1, 1}(1, 1, 1)$.
        Define $\xi \in \wt \Fcal_{3, 1}(1, 1, 1)$ by
        \begin{align*}
            \xi(x, \hat x, y) = \sigma(y + \phi_{\ind}(x - \hat x) - 2^{-s}).
        \end{align*}
        Then for any $y \in [0, 2^{-s}]$ and $1 \le j, k \le J$, it holds that
        \begin{align*}
            \xi(x_j, \hat x_k, y) = \begin{cases}
                y \quad \text{if } j = k, \\
                0 \quad \text{otherwise}.
            \end{cases}
        \end{align*}
        In particular,
        \begin{align*}
            \xi\left(x_j, \psi_{1} \circ \psi_{2}^{(k - 1)}(u_m), 2^{-s}\psi_{1} \circ \psi_{2}^{(k - 1)}(w_m)\right) = \begin{cases}
                2^{-(s+r)} y_{j} \quad \text{if } j = (m - 1)L' + k, \\
                0 \quad \text{otherwise}.
            \end{cases}
        \end{align*}

        \item Equipped with these building blocks, we can now the final decoder network. We show by induction over $k = 1, \dots, L'$ that there is $\phi' \in \wt \Fcal_{4, 4}( 3^n, k\lceil s' / n \rceil,  3^n)$
        computing
        \begin{align} \label{eq:decoder-induction}
            \phi'(x_j, u_m, w_m, y) = 
            \begin{pmatrix}
                x_j \\
                \psi_{2}^{(k)}(u_m) \\
                \psi_{2}^{(k)}(w_m) \\
                2^{-(s+r)}y_j \ind_{(m - 1)L' \le j < (m - 1)L' + k}.
            \end{pmatrix}
        \end{align}
        The base case is straightforward: the first coordinate is constant, the second and third coordinates are computed by $\psi \in \wt \Fcal_{1, 2}(3^n, \lceil c / n \rceil, 3^n)$, and the last coordinate is 0. Now suppose the claim holds for $k - 1$, and let $\phi'' \in \wt \Fcal_{4, 4}( 3^n, (k - 1)\lceil c / n \rceil,  3^n)$ be the corresponding network. 
        Define
        \begin{align*}
            \phi'(x_j, u, w, y) = 
            \begin{pmatrix}
                x_j \\
                \psi_{2}(\phi''(x_j, u, w, y)_2) \\
                \psi_{2}(\phi''(x_j, u, w, y)_3) \\
                y + \xi\left(x_j, \psi_{1}(u), \psi_{1}(w)\right)
         \end{pmatrix}.
        \end{align*}
        The function can be implemented by a network $\phi' \in \wt \Fcal( 3^n, k\lceil c / n \rceil,  3^n)$ by stacking and chaining the networks for $\phi''$ and $\psi$ and $\xi$ appropriately.
        Further it satisfies the desired property \eqref{eq:decoder-induction}, completing the induction.

        \item Finally, let $\rho(x) = 2^{(s + r) / L} \, x$ and define $\rho^{(L)}$ as its $L$-fold iteration. Then $\rho^{(L)}(x) = 2^{(s + r)} x$ and $\rho^{(L)} \in \wt \Fcal_{1,1}(1, L, 2^{(s + r) / L})$.
         The final network is 
        \begin{align*}
            \phi(x) = \rho^{(L)}\left(\phi'\left(x, \phi_u(x), \phi_w(x), 0\right)_4\right).
        \end{align*}
         The stated network size follows upon noting that $3^n \le N$, 
        \begin{align*}
            L' \lceil c / n \rceil = L  \sqrt{n /\log (NL) }  \lceil c / n \rceil \lesssim L (s + r) / \sqrt{\log N \log (NL)},
        \end{align*}
        and
              \begin{align*}
                  \sqrt{\log(NL) / n} \le 1 + \sqrt{\log L / \log N}.
              \end{align*}
    \end{enumerate}
\end{proof}

\subsection{Proof of \Cref{thm:main-compositional}}

\begin{lemma}[Covering number on $\epsilon$-enlargement]
\label{cov}
    Let $A \subset R^m$, $r>0$ and $\eta >0$. Define the $\epsilon$-enlargement by 
    \begin{align}
    \label{enlarge}
        A^{+\epsilon} := \{x \in \R^m: \mathrm{dist}_{\infty}(x,A) \leq \epsilon\}.
    \end{align}
    Then
    \begin{align*}
        \mathcal{N}(\eta, A^{+\epsilon}, \|\cdot\|_{\infty}) \lesssim \mathcal{N}(\eta, A, \|\cdot\|_{\infty})\left(1+\frac{\epsilon}{\eta}\right)^m. 
    \end{align*}
    Consequently, if $\mathcal{N}(\eta, A, \|\cdot\|_{\infty}) \lesssim \eta^{-d}$ for small $\eta$, then
    \begin{align*}
        \mathcal{N}(\eta, A^{+\epsilon}, \|\cdot\|_{\infty}) \lesssim \eta^{-d} \left(1+\frac{\epsilon}{\eta}\right)^m.
    \end{align*}
\end{lemma}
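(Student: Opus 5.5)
The plan is a direct covering argument in the sup-norm. Fix a minimal $\eta$-cover of $A$ with centers $a_1,\dots,a_K$, where $K=\mathcal{N}(\eta, A, \|\cdot\|_\infty)$. For each $x \in A^{+\epsilon}$ there is some $a \in A$ with $\|x-a\|_\infty \le \epsilon$. This holds up to an arbitrarily small slack, or exactly if we use closures, which does not affect covering numbers up to constants. There is also some $a_k$ with $\|a-a_k\|_\infty \le \eta$. Hence
\begin{align*}
A^{+\epsilon} \subseteq \bigcup_{k=1}^K B_\infty(a_k, \epsilon+\eta),
\end{align*}
where $B_\infty(c,\rho)=c+[-\rho,\rho]^m$ is a cube of side length $2\rho$.

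It therefore suffices to cover a single $\|\cdot\|_\infty$-ball of radius $\epsilon+\eta$ by $\eta$-balls. Here the sup-norm makes the count explicit: split each coordinate interval $[c_i-(\epsilon+\eta), c_i+\epsilon+\eta]$ into $\lceil (\epsilon+\eta)/\eta\rceil$ consecutive subintervals of length at most $2\eta$, and take their midpoints. The product grid of midpoints has
\begin{align*}
\lceil 1+\epsilon/\eta\rceil^m \le (2+\epsilon/\eta)^m \le 2^m(1+\epsilon/\eta)^m
\end{align*}
points. Every point of the cube is within $\eta$ in each coordinate of one of these grid points.

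Combining the two steps gives
\begin{align*}
\mathcal{N}(\eta, A^{+\epsilon}, \|\cdot\|_\infty) \le 2^m K (1+\epsilon/\eta)^m.
\end{align*}
The constant $2^m$ is absorbed into $\lesssim$, since $m$ is fixed (it is one of the dimensions $|S_{ij}|\le D_i$ in the application). The consequence follows immediately by substituting the hypothesis $K \lesssim \eta^{-d}$ for small $\eta$.

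There is no real obstacle. The only points needing care are the near-attainment of the distance to $A$ in the definition of the enlargement, which is harmless since it can be handled with an arbitrarily small slack or by replacing $A$ with its closure (which has the same covering numbers up to a factor-$2$ change in scale). One should also note that the constant depends on $m$ only through $2^m$.
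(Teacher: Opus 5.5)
Your proposal is correct and follows the paper's proof: cover $A$ by $\mathcal{N}(\eta, A, \|\cdot\|_\infty)$ sup-norm $\eta$-balls, note that $A^{+\epsilon}$ lies in the union of the concentric cubes of radius $\epsilon+\eta$, and cover each such cube by a product grid of $\eta$-balls. Where the paper only asserts the per-cube count and ignores whether $\mathrm{dist}_\infty(x,A)$ is attained, you make both explicit: the bound $\lceil 1+\epsilon/\eta\rceil^m \le 2^m(1+\epsilon/\eta)^m$ and the closure/slack argument.
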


\begin{proof}
    Let $\mathcal{N}(\eta, A, \|\cdot\|_{\infty}) =N$ and denote by $\{V_1, \dots, V_N\}$ the corresponding $\eta$-cover of $A$. Then
    \begin{align*}
        A^{+\epsilon} \subset \bigcup_{k=1}^N(V_k)^{+\epsilon}.
    \end{align*}
    Each $(V_k)^{+\epsilon}$ is contained in a translate of $[-\epsilon-\eta, \epsilon+\eta]^m$, which can be covered by $\lesssim ((\epsilon+\eta)/\eta)^m \lesssim (1+\epsilon/\eta)^m$. Multiplying by $N$ yields the claim.
\end{proof}

We proceed to the proof of the theorem.
Fix $\eps_0>0$ and set
\[
\eps_i := \eps_0 (C+1)^i,\qquad i=0,\dots,\ell,
\]
so that $\eps_{i+1}=(C+1)\eps_i$.
Assume $\eps_\ell\le 1$ and set $L'=L+L\sqrt{\log L/\log N}$.
Denote partial compositions
\[
G_i:=g_i \circ \cdots \circ g_0,
\qquad
\hat{G}_i := \hat{g}_i \circ \cdots \circ \hat{g}_0,
\]
for functios $\hat g_i$ to be chosen, $\phi = \hat G_\ell$,
and the cumulative errors 
\[
\delta_0 :=0, \qquad \delta_i := \sup_{\bx \in \mathcal{M}} \|G_i(\bx)-\hat{G}_i(\bx)\|_{\infty}, \quad i\ge 1.
\]

\medskip
\noindent\textbf{Step 1: construction on a fixed enlargement.}
For each level $i=0,\dots,\ell$ and each component $j=1,\dots,D_i$, we construct $\hat g_{ij}$ so that
\begin{equation}\label{eq:fixed-dom}
\sup_{\bz \in (\mathcal{M}_{ij})^{+\eps_i}} |g_{ij}(\bz)-\hat g_{ij}(\bz)| \le \eps_i.
\end{equation}
By \Cref{cov} with $\eta=\eps_i^{1/s_{ij}}$ and enlargement radius $\eps_i$,
\begin{align*}
\mathcal{N}\!\left(\eps_i^{1/s_{ij}}, (\mathcal{M}_{ij})^{+\eps_i}, \|\cdot\|_\infty\right)
&\lesssim
\mathcal{N}\!\left(\eps_i^{1/s_{ij}}, \mathcal{M}_{ij}, \|\cdot\|_\infty\right)
\left(1+\frac{\eps_i}{\eps_i^{1/s_{ij}}}\right)^{d_{ij}} \\
&\lesssim
\eps_i^{-d_{ij}/s_{ij}}
\left(1+\eps_i^{1-1/s_{ij}}\right)^{d_{ij}}
\lesssim
\eps_i^{-d_{ij}/s_{ij}},
\end{align*}
where in the last step we used $s_{ij}\ge 1$ and $\eps_i\le 1$ so that
$\eps_i^{1-1/s_{ij}}\le 1$.

Therefore, by \Cref{thm:main-approx}, there exist networks
$\hat g_{ij}\in \tilde \Fcal_{D_{i-1},1}(N,L',N)$ satisfying \eqref{eq:fixed-dom}, provided
\[
N^2L^2 \asymp \eps_i^{-d_{ij}/s_{ij}}
\quad\text{for all } i,j.
\]
In particular, it suffices to choose
\[
N^2L^2 \asymp \eps_0^{-d^*/s^*},
\qquad\text{where}\qquad
\frac{d^*}{s^*}:=\max_{i,j}\frac{d_{ij}}{s_{ij}}.
\]
Define $\hat g_i:=(\hat g_{i1},\dots,\hat g_{iD_i})$ and $\hat G_i:=\hat g_i\circ\cdots\circ\hat g_0$.

\medskip
\noindent\textbf{Step 2: Induction for the propagated error.}
We claim that for all $i=0,\dots,\ell$,
\begin{equation}\label{eq:delta-bound}
\delta_i \le \eps_{i+1}
\end{equation}
and $\hat{G}_i \in \tilde \Fcal_{D,D_i}(\max_{k}D_kN,iL',N)$. \\

\noindent\emph{Base case $i=1$.}
Since $\delta_0=0$, we have $(\mathcal{M}_{1j})^{+\delta_0}=\mathcal{M}_{1j}\subset (\mathcal{M}_{1j})^{+\eps_1}$.
Thus \eqref{eq:fixed-dom} yields
\[
\|G_1(\bx)-\hat{G}_1(\bx)\|_{\infty} = \sup_{\bz\in(\mathcal M_{1j})^{+\delta_0}} |g_{1j}(\bz)-\hat g_{1j}(\bz)|
\le
\sup_{\bz\in(\mathcal M_{1j})^{+\eps_1}} |g_{1j}(\bz)-\hat g_{1j}(\bz)|
\le \eps_1.
\]
Applying \Cref{lemprop} gives 
$\delta_1\le C\delta_0+\eps_{1}=\eps_1\le \eps_2$ and using parallelizing of the networks $\hat{g}_{ij}$, we have $\hat{G}_1=(\hat{g}_{11}, \dots, \hat{g}_{1D_1}) \in \tilde \Fcal_{D,D_1}(D_1N,L',N)$.\\

\noindent\emph{Inductive step.}
Assume $\delta_{i-1}\le \eps_i$. Then for any $\bx\in\mathcal M$ and any $j$,
with $\bu:=\pi_{S_{ij}}(G_{i-1}(\bx))\in\mathcal M_{ij}$ and
$\hat\bu:=\pi_{S_{ij}}(\hat G_{i-1}(\bx))$, we have
$\|\bu-\hat\bu\|_\infty \le \delta_{i-1}\le \eps_i$ and hence
\[
\hat\bu\in (\mathcal M_{ij})^{+\delta_{i-1}}
\subset (\mathcal M_{ij})^{+\eps_i}.
\]
Therefore \eqref{eq:fixed-dom} applies at $\hat\bu$ and gives
$|g_{ij}(\hat\bu)-\hat g_{ij}(\hat\bu)|\le \eps_i$, where $\hat{g}_{ij} \in \mathcal{F}_{D_{i-1},1}(N, L',N)$.
Using \Cref{lemprop} we conclude
\[
\delta_i \le C\delta_{i-1}+\eps_i \le C\eps_i+\eps_i = \eps_{i+1}
\]
and
\begin{align*}
    \delta_i = \sup_{\bx \in \Mcal}\|G_i(\bx) - \hat{G}_i(\bx)\|_\infty  \leq \eps_{i+1},
\end{align*}
where $\hat{G}_i=(\hat{g}_{i1}, \dots, \hat{g}_{iD_i})(G_{i-1}(\bx)) \in \mathcal{F}_{D,D_i}(\max_k D_k N, iL',N)$.
This proves the assertion.

\medskip
\noindent \textbf{Step 3: Choice of $\eps_0$.}
Finally, unrolling the recursion (or applying \Cref{lemprop}) yields
\[
 \sup_{\bx \in \Mcal} |G_\ell(\bx)-\hat G_\ell(\bx)| =\delta_\ell
\le \sum_{i=1}^{\ell} C^{\ell-i}\eps_i
= \sum_{i=1}^{\ell} C^{\ell-i}(C + 1)^i \eps_0
\le \ell (C + 1)^{\ell} \eps_0.
\]
With the choice
\[
\eps_{0}:= \frac{\eps}{\ell(C + 1)^\ell},
\]
we obtain $\delta_\ell\le \eps$ and $\hat{G}_{\ell} \in \mathcal{F}_{D,1}(\max_i D_i N, \ell L', N)$, completing the proof.

\section{Auxiliary results}

\subsection{Step-function approximation}

\begin{lemma} \label{lem:stepfun-2}
    For any \( N, L, r \in \mathbb{N}^{+} \) and $K = N^2L^2$,
    there exists a one-dimensional function \( \phi \in \wt \Fcal_{1,1}(\sqrt{N}, rL, N) \) such that
    \[
        \phi(x) = \frac{k}{K}, \quad \text{if } x \in \left[ \frac{k}{K}, \frac{k+1}{K} - \frac{\ind_{k \le K - 2}}{4 K^r}  \right], \quad \text{for } k = 0,1, \dots, K - 1.
    \]
\end{lemma}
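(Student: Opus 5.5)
This lemma is a variant of the step-function constructions in \citet{Lu2021} and \citet{ou2024three}. The plan is to build $\phi$ as a composition of $2r$ coarse step functions, exploiting the factorization $K = N^2L^2 = (NL)^2$. Writing $x = \sum$ of base-$(NL)$ digits, the value $\lfloor Kx\rfloor/K$ equals $k_1/(NL) + k_2/K$ with $k_1 = \lfloor NL x\rfloor$ and $k_2 = \lfloor NL(NLx - k_1)\rfloor$. It therefore suffices to implement, twice, a step function with $NL$ steps and to chain them through the residual computation $x \mapsto NLx - k_1$, which is affine once $k_1$ is known.

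\textbf{Step 1: a single coarse step function.} First I construct $\psi \in \wt\Fcal_{1,1}(\sqrt N, L, N)$ with $\psi(y) = j$ for $y \in [j/(NL), (j+1)/(NL) - \eta]$, $j = 0,\dots,NL-1$, where $\eta$ is a small gap. This is a piecewise linear function with $O(NL)$ breakpoints. One route is the standard width-$N$, depth-$L$ construction: a shallow layer of width about $\sqrt N$ produces $\sqrt N$ steps, and the depth is used to compose and refine the pieces, since a composition of sawtooth-like maps multiplies the number of steps. Equivalently, I would invoke the piecewise-linear realization result \Cref{lem:pw-linear}, which is already used in the paper for exactly this purpose, to obtain $O(NL)$ pieces with width $\sqrt N$ and depth $L$ up to constants.

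\textbf{Step 2: handling the gap size.} The slopes of the jumps are of order $1/\eta$. With $\eta = 1/(4K^r)$ this gives weights of order $K^r = (NL)^{2r}$, which exceeds the allowed magnitude $N$. I would distribute each steep jump across depth. A jump of slope $K^r$ is written as an $O(r)$-fold composition of clipping maps $t \mapsto \min(\max(ct,0),1)$ with $c \le N$. Since $N^{O(1)\cdot rL}$ dominates $(NL)^{2r}$, this costs depth $O(rL)$, possibly less. Applying a clamp $\min(\max(\cdot,0),1)$ to each indicator-type unit and iterating the scaling $L$ times, with factor $K^{r/L}\lesssim N^{O(1)}$ per layer, keeps the weights within $N^C$ for large $L$. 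This is the source of the $rL$ depth in the statement.

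\textbf{Step 3: composing the levels.} The final network is $\phi(x) = \psi(x)/(NL) + \psi(NLx - \psi(x))/K$, with a clamp applied before the second $\psi$. Correctness on the stated intervals requires checking that whenever $x$ lies in the allowed interval for $k$, the first-level residual $NLx - k_1$ lies in an allowed interval for $k_2$. This holds because the excluded gaps of the fine grid are those near multiples of $1/K$, which contain the gaps of the coarse grid. So I will choose the coarse gap as $\eta' = NL\cdot 1/(4K^r)$ on the rescaled scale, and this is compatible.

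I expect the main obstacle to be Step 2 together with the width budget $\sqrt N$ in Step 1: getting $NL$ steps with width only $\sqrt N$ and bounded weights. If the direct construction fails, I would fall back on three levels of $\sqrt N$-width step functions ($\sqrt N\cdot\sqrt N\cdot L^2$ steps arranged via a depth-$L$ bit-extraction–style iteration). The constants would be absorbed into $\wt\Fcal$.
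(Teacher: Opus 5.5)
Your overall construction is the paper's. With $K'=NL$, the paper realizes a coarse step function $\psi$ with $K'$ steps and gap $1/(4K^r)$ via \Cref{lem:pw-linear} at width $\sqrt N$, valued in $\{k/K'\}$ rather than in the integers. It then sets $\phi(x)=\psi(x)+\psi(K'(x-\psi(x)))/K'$, which is your Step 3 formula, and verifies it with the same interval check. On that check, the gap in the first call must be at most $1/(4K^r)$ in $x$, because its transition zone has to sit inside the fine excluded interval just below each multiple of $1/(NL)$. The rescaled second call tolerates gaps up to $NL/(4K^r)$. So a single $\psi$ with gap $1/(4K^r)$ serves both calls, and the extra clamp is not needed.

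The part of your plan that does not hold up is Step 2. Within width $\sqrt N$ there are no per-jump ``indicator-type units'' to clamp: a sum of $NL$ clamped ramps needs width of order $NL$, or depth of order $\sqrt N\, rL$ if serialized. Also, $L$ scalings by $K^{r/L}=(NL)^{2r/L}$ do not give weights $N^{O(1)}$ with a constant independent of $r$. No separate sharpening is needed, because \Cref{lem:pw-linear} already trades depth for weight magnitude: its bound $(M^6\delta^{-4}\bar y)^{1/L}$ refers to the depth $L$ of the realizing network. The paper applies it with depth $rL$ in place of $L$. Width $C\sqrt N$ still accommodates $M\approx 2NL\le (C\sqrt N)^2 rL$ breakpoints. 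With $\delta=1/(4K^r)$ and $\bar y\le 1$, the weights are at most $\bigl((2NL)^6\,4^4K^{4r}\bigr)^{1/(rL)}\lesssim (NL)^{14/L}\lesssim N^{14}$. This is the entire source of the $rL$ depth, so once you invoke the lemma this way, Step 2 and the three-level fallback can be dropped. The multiplication by $NL$ before the second call should likewise be spread over $L$ layers with factor $(NL)^{1/L}\lesssim N$; the paper leaves this detail implicit.
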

\begin{proof}
    Let $K' = NL$ and observe that a piecewise linear function 
    satisfying $\phi(x) = 0$ for $x \le 0$, $\phi(x) = k/K'$ for $x \ge (K' - 1)/K'$, and
    otherwise interpolating the points
    \begin{align*}
       \bigcup_{k = 0}^{K' - 1} \left\{\left( \frac{k}{K'}, \frac{k}{K'} \right) \right\} \cup  \bigcup_{k = 0}^{K' - 2}  \left\{ \left( \frac{k+1}{K'} - \frac{1}{4 K^s}, \frac{k}{K'} \right) \right\}.
    \end{align*}
    Applying \Cref{lem:pw-linear} with $\bar y = 1$, $\delta = 1/(4 K^s)$, this
    can be realized as $\psi \in \wt \Fcal_{1,1}(\sqrt{N}, L, (K')^{6/L} K^{4s/L}) \subseteq \wt \Fcal_{1,1}(\sqrt{N}, sL, N)$.
    Based on this, we can construct a network $\phi \in  \wt \Fcal_{1,1}(\sqrt{N}, sL, N)$ computing
    \begin{align*}
        \phi(x) = \psi(x) + \psi(K'(x - \psi(x))) / K'.
    \end{align*}
    Fix $k \in \{0, 1, \dots, K - 1\}$ and let $x \in [k/K, (k+1)/K - 1/(4 K^s)]$.
    Define $k' := \lfloor K'x \rfloor $ and observe that $\lfloor Kx \rfloor = k$.
    Then it holds
    \begin{align*}
        \phi(x) = \frac{k'}{K'} + \frac{\lfloor K x - K ' k'  \rfloor}{K} = \frac{K'k'}{K} + \frac{\lfloor K x \rfloor - K ' k' }{K} = \frac{k}{K},
    \end{align*}
    as desired.
\end{proof}

\subsection{Median smoothing}

Define the modulus of continuity of a function $f$ as
\[
    \omega_f(\delta) = \sup_{\bx, \by \in [0,1]^D\colon \|\bx - \by\|_{\infty} \le \delta} |f(\bx) - f(\by)|, \quad \delta > 0.
\]
The following is a direct consequence of Lemma 3.4 of \citet{Lu2021} (restated in \Cref{lem:median-smoothing}) upon reparametrization.

\begin{lemma} \label{cor:median-smoothing}
    Let $\eps>0$, $M, K\in\mathbb{N}^+$, $\delta\in(0,1/(3K)]$,
    $\Bcal \subseteq \{ 0, \dots, K - 1\}^D$, and
    \begin{align*}
        Q_{\bbeta} = \left[ \frac{\beta_i}{K}, \frac{\beta_{i} +1}{K}\right]_{i = 1}^D, \quad \bbeta \in \Bcal.
    \end{align*}
    Assume $f\in C([0,1]^D)$ and $g:\mathbb{R}\to\mathbb{R}$ is a general function with
    \begin{align*}
        |g(\bx)-f(\bx)|\le \eps, \qquad \text{for any }  \bx \in \bigcup_{\bbeta \in \Bcal}  Q_{\bbeta} \setminus \Omega_{K, \delta},
    \end{align*}
    with $\Omega_{K, \delta}$ and $M_g$ defined as in \Cref{lem:median-smoothing}.
    Then
    \[
        |M_g(\bx)-f(\bx)|\le \eps + D\omega_f(\delta)
        \quad \text{for any }   \bx \in \bigcup_{\bbeta \in \Bcal} Q_{\bbeta}.
    \]
\end{lemma}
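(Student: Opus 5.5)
The plan is to reduce \Cref{cor:median-smoothing} to Lemma 3.4 of \citet{Lu2021} (restated as \Cref{lem:median-smoothing}). That lemma applies the median smoothing operator $M_g$ (built from $D$ iterated middle-value operations over shifted copies $g(\bx \pm \delta \be_j)$, or over $3$ shifts per coordinate) and concludes $|M_g(\bx) - f(\bx)| \le \eps + D\omega_f(\delta)$ on all of $[0,1]^D$. Its hypothesis is that $|g - f| \le \eps$ on $[0,1]^D \setminus \Omega_{K,\delta}$. Here the hypothesis holds only on $\bigcup_{\bbeta \in \Bcal} Q_{\bbeta} \setminus \Omega_{K,\delta}$, and the conclusion is claimed only on $\bigcup_{\bbeta\in\Bcal} Q_{\bbeta}$. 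So the proof is a localization of Lu et al.'s argument rather than a direct invocation.

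\emph{First step.} I would fix $\bx \in Q_{\bbeta}$ with $\bbeta \in \Bcal$ and unfold the definition of $M_g(\bx)$ as the iterated median. The output is determined by the values of $g$ at the $3^D$ points $\bx + \delta \sum_j t_j \be_j$ with $t_j \in \{-1,0,1\}$ (or whichever shift pattern \Cref{lem:median-smoothing} uses).

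\emph{Second step: the key geometric observation.} Since $\delta \le 1/(3K)$, in each coordinate $j$ the three points $x_j, x_j\pm\delta$ contain at most one point in the bad strip $\bigcup_k (k/K - \delta, k/K)$. Lu et al.'s argument uses exactly this fact, coordinate by coordinate. I need the stronger statement that at least two of the three points lie in the good part of the \emph{same} cube's coordinate interval $[\beta_j/K, (\beta_j+1)/K]$, or of an adjacent one that still lies in $\bigcup_{\Bcal} Q$.

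This is the main obstacle. Near the boundary of $\bigcup_{\Bcal} Q_{\bbeta}$, a shift can leave the admissible union, and $g$ is then uncontrolled. I would first try to use the closed-cube convention together with the one-sided structure of $\Omega_{K,\delta}$, whose bad strips are $(k/K-\delta, k/K)$ lying just left of the grid lines. If the shifts are $x_j, x_j - \delta, x_j - 2\delta$ (or the shift pattern can be chosen this way), then for $x_j \in [\beta_j/K, (\beta_j+1)/K]$ two of the three points stay in $[\beta_j/K, (\beta_j+1)/K]$ outside the strip. The only exception is when $x_j$ lies near the left edge. In that case I would note that the points falling into the neighbouring cube $\bbeta - \be_j$ form the minority, so the median ignores them. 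If necessary I would extend $g$ by clipping the input into the cube via coordinatewise projection, which is harmless since $M_g$ only uses $g$ at the shifted points.

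\emph{Third step.} Given this, I would induct over coordinates as in Lu et al. After the median in coordinate $j$, the value is within $\eps + j\,\omega_f(\delta)$ of $f$ evaluated at a point differing from $\bx$ only in the remaining coordinates. The median of three numbers, two of which are $\eps'$-close to $f$ at points within $\delta$, is $\eps' + \omega_f(\delta)$-close. Iterating $D$ times gives $\eps + D\omega_f(\delta)$. Since all evaluations stay within $[0,1]^D$ (with boundary clipping), the modulus $\omega_f$ as defined applies, and the bound holds on $\bigcup_{\bbeta\in\Bcal}Q_{\bbeta}$.
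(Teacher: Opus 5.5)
There is a genuine gap. It is exactly the obstacle you flag in your second step, and none of your workarounds resolves it; in fact the statement is false as written.

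You cannot change the shift pattern or clip $g$. $M_g$ is fixed by \Cref{lem:median-smoothing} to query $g$ at $\bx\pm\delta\be_{i+1}$. Clipping alters $g$ precisely at those shifted points outside the cube, so it bounds a different function, and in the application $g$ is a fixed network.

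Even the one-sided pattern would fail. For $x_j\in[\beta_j/K,\beta_j/K+\delta)$, both $x_j-\delta$ and $x_j-2\delta$ lie left of $\beta_j/K$, i.e.\ in the strip $(\beta_j/K-\delta,\beta_j/K)$, in the slab of $\bbeta-\be_j$, or outside $[0,1]$. So they are the majority, not the minority.

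With the symmetric shifts the failure sits at the upper faces. If $x_j\in((\beta_j+1)/K-\delta,(\beta_j+1)/K)$ with $\beta_j\le K-2$, then $\bx\in\Omega_{K,\delta}$ and $\bx+\delta\be_j$ lies in the slab of $\bbeta+\be_j$. If that point is not covered by $\bigcup_{\bbeta'\in\Bcal}Q_{\bbeta'}$, only one of the three values is controlled and the median is arbitrary. Concretely, take $D=1$, $K=2$, $\Bcal=\{0\}$, $f\equiv 0$, and $g=0$ on $[0,1/2-\delta]\cup\{1/2\}$, $g=1$ elsewhere. The hypothesis holds, yet $M_g(1/2-\delta/2)=\operatorname{mid}(0,1,1)=1$, which violates the claimed bound for every $\eps<1$.

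What is missing is an extra hypothesis, not a cleverer argument. Assume $|g-f|\le\eps$ on $\bigl(\bigcup_{\bbeta'\in\Bcal^+}Q_{\bbeta'}\bigr)\setminus\Omega_{K,\delta}$, where $\Bcal^+$ is the set of $\bbeta'\in\{0,\dots,K-1\}^D$ with $\|\bbeta'-\bbeta\|_\infty\le 1$ for some $\bbeta\in\Bcal$. Every point at which $M_g(\bx)$ evaluates $g$ is within sup-distance $\delta<1/K$ of $\bx$. It therefore lies in this enlarged union or outside $[0,1]^D$. Your third step then goes through verbatim: this is Lu et al.'s coordinatewise induction, using that for $y\in[0,1]$ at most one of $y-\delta,y,y+\delta$ lies in a strip or outside $[0,1]$.

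The paper's own proof does not escape the problem either. It rescales $Q_{\bbeta}$ to $[0,1]^D$ with $\tilde K=1$ and cites \Cref{lem:median-smoothing}. But $\Omega_{1,\tilde\delta}=\emptyset$, so that lemma would require $|g-f|\le\eps$ on all of $Q_{\bbeta}$, including the strips near its upper faces, which is not assumed.

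For \Cref{thm:main-approx} the repair is cheap. Also fit the Taylor coefficients on the cubes adjacent to those meeting $\Mcal$; this enlarges $\wt J$ by at most a factor $3^D$.
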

\begin{proof}
    Fix $\bbeta \in \Bcal$ and reparametrize
    \begin{align*}
        \tilde f(\bx) =  f(\bbeta/K + \bx/K), \; \tilde g(\bx) = g(\bbeta/K + \bx/K), \; M_{\tilde f}(\bx ) = M_{f}(\bx/K ), \; \tilde K = 1, \; \tilde \delta = \delta  K.
    \end{align*}
    Observe that $\tilde \delta \in (0, 1/(3\tilde K)]$, and
    \begin{align*}
        \omega_{\tilde f}(\tilde \delta) & = \sup_{\bx, \by \in [0,1]^D\colon \|\bx - \by\|_{\infty} \le \tilde \delta} |\tilde f(\bx) - \tilde f(\by)|,  \\
                                         & =  \sup_{\bx, \by \in [0,1]^D\colon \|\bx - \by\|_{\infty} \le \delta  K} | f(k/K +  \bx/K) - f(k/K + \by/K)|, \\
                                         & \le  \sup_{\bx, \by \in [0,1]^D\colon \|\bx - \by\|_{\infty} \le \delta} | f( \bx) - f(\by)|                   \\
                                         & = \omega_f(\delta).
    \end{align*}
    Then  \Cref{lem:median-smoothing} implies that
    \begin{align*}
        \sup_{\bx \in Q_{\bbeta}}|M_{\tilde g}(\bx)-\tilde f(\bx)|\le \eps + d\omega_{\tilde f}(\tilde \delta) \le  \eps + D\omega_f(\delta).
    \end{align*}
    Since $\bbeta \in \Bcal$ was arbitrary, the proof is complete.
\end{proof}

The next result follows from iterative application of \Cref{lem:mid-1d}.
\begin{lemma} \label{lem:mid}
    Suppose $N,L, B \in \N$, $\phi \in \Fcal_{D,1}(N, L, B)$, and define $M_\phi$ as in \Cref{lem:median-smoothing}. Then $M_\phi \in \Fcal_{D,1}(3^D(N + 4), L + 2D, B)$.
\end{lemma}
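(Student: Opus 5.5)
Let $\phi\in\Fcal_{D,1}(N,L,B)$. The statement is said to follow by iterating \Cref{lem:mid-1d}, so the plan is to write $M_\phi$ as $D$ successive one-dimensional median operations and track the network size at each stage.

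In \citet{Lu2021}, $M_\phi$ is built by applying a one-dimensional median operator coordinate by coordinate. For $k=1,\dots,D$, set $\phi_0=\phi$ and
\begin{align*}
\phi_k(\bx)=\mathrm{mid}\bigl(\phi_{k-1}(\bx-\delta\be_k),\,\phi_{k-1}(\bx),\,\phi_{k-1}(\bx+\delta\be_k)\bigr),
\end{align*}
where $\mathrm{mid}$ is the middle value of three numbers. Then $M_\phi=\phi_D$. I expect \Cref{lem:mid-1d} to give exactly this one-step bound: if $\psi\in\Fcal_{D,1}(N',L',B)$, then $\bx\mapsto\mathrm{mid}(\psi(\bx-\delta\be_k),\psi(\bx),\psi(\bx+\delta\be_k))$ lies in $\Fcal_{D,1}(3(N'+4),L'+2,B)$ or a comparable class.

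The step is realized in three parts. \emph{Shifts:} the translations $\bx\mapsto\bx\pm\delta\be_k$ are absorbed into the first-layer biases, which needs $|\delta|\le B$, true since $\delta\le1/(3K)\le 1\le B$. \emph{Parallel copies:} three copies of $\psi$ run side by side, which multiplies the width by $3$ and leaves the depth unchanged. \emph{Median:} the three outputs are fed into a ReLU network computing $\mathrm{mid}$. This uses $\mathrm{mid}(a,b,c)=a+b+c-\max(a,b,c)-\min(a,b,c)$ together with $\max(a,b)=\tfrac12(a+b+|a-b|)$, written with unit-size weights. It costs two extra hidden layers and a few extra neurons per copy, which is where the $+4$ comes from.

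To keep the extra depth at $2$, the median layers should be merged with the output affine layers of the three copies rather than stacked on top of them. The main obstacle is this bookkeeping, not any conceptual step. Specifically, the width recursion must close to $3^D(N+4)$ rather than something like $3^D N+4\cdot 3^D\cdot D$, and the weights must stay bounded by $B$, with no constant such as $1/2$ multiplied by large weights. The width closes because $3(3^{k-1}(N+4)+4)\le 3^k(N+4)+12$, and the additive slack can be absorbed by reusing neurons. If an exact constant fails, the $\wt\Fcal$ convention used everywhere this lemma is invoked tolerates constant-factor losses.

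Iterating $D$ times gives width $3^D(N+4)$ and depth $L+2D$, with the weight bound unchanged at $B$.
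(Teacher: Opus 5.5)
Your overall plan is the paper's: write $M_\phi$ as $D$ nested one-dimensional median steps, realize each step as three shifted copies of the previous network in parallel followed by a ReLU network for $\mathrm{mid}$, and iterate. The gap is in the width bookkeeping, which you yourself identify as the crux.

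\Cref{lem:mid-1d} is not the one-step bound you guessed. It only says $\mathrm{mid}\in\Fcal_{3,1}(14,2,1)$. Its two hidden layers are new layers placed \emph{after} the $L+2(k-1)$ hidden layers of the three parallel copies; only its input affine map is merged with their output affine maps. So widths combine by a maximum, not a sum: if $\phi_{k-1}$ has width $W_{k-1}$, then $\phi_k$ has width $\max\{3W_{k-1},14\}$. Since $\max\{3N,14\}\le 3(N+4)$ and $3^k(N+4)\ge 15>14$, induction gives $W_D\le 3^D(N+4)$ exactly. The $+4$ only makes room for the width-$14$ median block when $N$ is small.

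Your ``a few extra neurons per copy'' instead produces the recursion $W_k=3(W_{k-1}+4)$, i.e.\ $W_D=3^DN+6(3^D-1)$. This already exceeds $3^D(N+4)$ for $D\ge 2$. ``Absorbing the slack by reusing neurons'' is not an argument. Retreating to the $\wt\Fcal$ convention does not prove the lemma either, since it is stated with explicit constants. The fix is simply to drop the per-copy extra neurons and take the maximum.

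A minor side remark concerns the shift $\bx\mapsto\bx\mp\delta\be_k$. Absorbing it into the first layer changes the bias to $\bv_0\mp\delta W_0\be_k$, whose entries can reach $(1+\delta)B$, and up to $(1+D\delta)B$ after $D$ steps. So $|\delta|\le B$ is not the relevant condition. The paper's proof passes over this point as well.
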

\begin{proof}
    We follow the proof Theorem 2.1 of \citet{Lu2021}.
    Implement the function
    \begin{align*}
        \Phi_{1}(\bx) = (\phi(\bx - \delta \be_1), \phi(\bx), \phi(\bx + \delta \be_1))^\top,
    \end{align*}
    by a ReLU network with width $3N$ and depth $L$ and weights bounded by $B$. By \Cref{lem:mid-1d}, the function $M_{\phi, 1} = \operatorname{mid} \circ \, \Phi_1$ can be implemented by a ReLU network with width $\max\{3N, 14\} \le 3(N + 4)$, depth $L + 2$, and weights bounded by $\max(B, 1) = B$.
    Repeating this procedure $D$ times, we obtain that $M_\phi = M_{\phi, D}$ can be implemented by a ReLU network with width $3^D(N + 4)$, depth $L + 2D$, and weights bounded by $B$.
\end{proof}

Together, we have the following proposition.
\begin{proposition} \label{prop:median-smoothing-nn}
    Let $\eps>0$, $M, K\in\mathbb{N}^+$, $\delta\in(0,1/(3K)]$,
    $\Bcal \subseteq \{ 0, \dots, K - 1\}^D$, and
    \begin{align*}
        Q_{\bbeta} = \left[ \frac{\beta_i}{K}, \frac{\beta_{i} +1}{K}\right]_{i = 1}^D, \quad \bbeta \in \Bcal.
    \end{align*}
    Assume $f\in C([0,1]^D)$ and $g \in \Fcal_{D,1}(N, L, B)$ are  such that
    \begin{align*}
        |g(\bx)-f(\bx)|\le \eps, \qquad \text{for any }  \bx \in \bigcup_{\bbeta \in \Bcal}  Q_{\bbeta} \setminus \Omega_{K, \delta},
    \end{align*}
    with $\Omega_{K, \delta}$ defined as in \Cref{lem:median-smoothing}.
    Then there  $\phi \in \Fcal_{D,1}(3^D(N + 4), L + 2D, B)$  such that
    \[
        |\phi(\bx)-f(\bx)|\le \eps + D\omega_f(\delta)
        \qquad \text{for any }   \bx \in \bigcup_{\bbeta \in \Bcal} Q_{\bbeta}.
    \]
\end{proposition}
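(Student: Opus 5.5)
The proposition follows by feeding the network $g$ into the median-smoothing operator $M_g$ of \Cref{lem:median-smoothing}. We then combine two earlier results: \Cref{cor:median-smoothing}, which controls the approximation error of $M_g$, and \Cref{lem:mid}, which controls its network size. Concretely, we set $\phi := M_g$. Here $M_g$ is the iterated coordinatewise median
\begin{align*}
M_{g,0} = g, \qquad M_{g,k}(\bx) = \operatorname{mid}\bigl(M_{g,k-1}(\bx - \delta \be_k), M_{g,k-1}(\bx), M_{g,k-1}(\bx + \delta \be_k)\bigr), \quad k = 1, \dots, D,
\end{align*}
with the same $\delta$ as in the hypothesis, so that $\phi = M_{g,D}$.

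\emph{Step 1 (error bound).} By hypothesis, $|g - f| \le \eps$ on $\bigcup_{\bbeta \in \Bcal} Q_{\bbeta} \setminus \Omega_{K,\delta}$, and $f$ is continuous on $[0,1]^D$. This is exactly the setting of \Cref{cor:median-smoothing}. That corollary only requires $g$ to be a general function, so in particular it applies to a network. It gives
\begin{align*}
|M_g(\bx) - f(\bx)| \le \eps + D\,\omega_f(\delta) \qquad \text{for all } \bx \in \bigcup_{\bbeta \in \Bcal} Q_{\bbeta}.
\end{align*}
For this step I only need to check that the $\delta$ and $\Omega_{K,\delta}$ used in defining $M_g$ match those in the corollary, and that $\delta \le 1/(3K)$. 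Both hold by assumption.

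\emph{Step 2 (network size).} Since $g \in \Fcal_{D,1}(N,L,B)$, \Cref{lem:mid} shows that $M_g \in \Fcal_{D,1}(3^D(N+4), L+2D, B)$. The key point is that shifting the input by $\pm\delta\be_k$ only changes the first-layer bias. The three shifted copies at each stage therefore run in parallel within width $3\times$ the previous width, and the three-input median costs two extra layers with weights bounded by $1 \le B$ (\Cref{lem:mid-1d}).

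\emph{Main obstacle.} The one place to be careful is the weight bound under the input shifts. Absorbing $\pm\delta\be_k$ into the first-layer bias changes it by $\delta \cdot \max|W_0| \le \delta B$, so the new bias has magnitude at most $B + \delta B$. To avoid exceeding $B$, I would instead add a preliminary affine/identity layer computing $\bx \pm \delta\be_k$, which has weights in $\{0,\pm1\}$ and bias $\delta < 1 \le B$. This costs only $O(1)$ extra depth per coordinate, consistent with the depth $L+2D$ of \Cref{lem:mid} up to the constants hidden in $\wt\Fcal$. Alternatively, I would accept the slight enlargement $B(1+\delta) \le 2B$, which is absorbed in the $\wt\Fcal$ notation used downstream in \Cref{thm:main-approx}. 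With Steps 1 and 2 combined, $\phi = M_g$ has both the required size and the required error bound.
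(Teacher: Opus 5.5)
Your plan is the paper's own argument: the proposition is obtained there by combining \Cref{cor:median-smoothing} (error) with \Cref{lem:mid} (size) for $\phi=M_g$. The gap is in Step~1, which takes \Cref{cor:median-smoothing} as a black box. That lemma is false as stated, so the bound you assert for $M_g$ does not hold.

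The median trick needs, in each direction $j$, two of the three points $\bx-\delta\be_j,\bx,\bx+\delta\be_j$ to be controlled. Suppose $\bx\in Q_{\bbeta}$ lies in the strip $(\beta_j+1)/K-\delta<x_j<(\beta_j+1)/K$. Then $\bx\in\Omega_{K,\delta}$, so $g(\bx)$ is uncontrolled. Moreover, $\bx+\delta\be_j$ lies in the neighbouring cube $Q_{\bbeta+\be_j}$, about which nothing is assumed when $\bbeta+\be_j\notin\Bcal$.

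Concretely, take $D=1$, $K=2$, $\delta=1/6$ and $\Bcal=\{0\}$, so that $Q_0\setminus\Omega_{K,\delta}=[0,1/3]\cup\{1/2\}$. Let $f\equiv 0$ and $g(x)=12\sigma(x-1/3)-24\sigma(x-5/12)+24\sigma(x-1/2)-12\sigma(x-7/12)$. Then $g=f$ on $Q_0\setminus\Omega_{K,\delta}$ and $\omega_f\equiv 0$. Yet $M_g(5/12)=\operatorname{mid}\bigl(g(1/4),g(5/12),g(7/12)\bigr)=\operatorname{mid}(0,1,1)=1$.

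The paper's proof of \Cref{cor:median-smoothing} breaks at the rescaling to $\tilde K=1$. There $\Omega_{1,\tilde\delta}=\emptyset$, so \Cref{lem:median-smoothing} would need $|g-f|\le\eps$ on all of $Q_{\bbeta}$, strips included, which is not assumed. Since the proposition is existential and $\phi\equiv0$ works in this example, this refutes the argument via $M_g$, not the statement itself.

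The missing idea is to assume control on the neighbour closure $\Bcal^+=\{\bbeta+\bm c\colon \bbeta\in\Bcal,\ \bm c\in\{-1,0,1\}^D\}\cap\{0,\dots,K-1\}^D$. That is, require $|g-f|\le\eps$ on $\bigcup_{\bbeta'\in\Bcal^+}Q_{\bbeta'}\setminus\Omega_{K,\delta}$. Since $\delta<1/K$, every point $\bx+\delta\bm c$ queried by $M_g(\bx)$, for $\bx\in Q_{\bbeta}$ with $\bbeta\in\Bcal$, then lies either outside $[0,1]^D$ or in $\bigcup_{\bbeta'\in\Bcal^+}Q_{\bbeta'}$.

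The coordinatewise induction behind \Cref{lem:median-smoothing} then runs locally. One shows $|M_{g,k}(\by)-f(\by)|\le\eps+k\,\omega_f(\delta)$ for the queried $\by\in[0,1]^D$ whose coordinates $y_{k+1},\dots,y_D$ avoid the strips. This works because, in direction $k+1$, at least two of the three points have that coordinate in $[0,1]$ and outside the strips; here $\delta\le 1/(3K)$ is used, also near $0$ and $1$. In \Cref{thm:main-approx} this fix only multiplies the number of memorized cells by a constant depending on $D$.

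Your weight-bound remark is legitimate, since \Cref{lem:mid} silently absorbs the shifts into the first-layer bias. However, the shifts accumulate over the $D$ stages. The innermost copies compute $g(\bx+\delta\bm c)$, whose first-layer bias is $\bv_0+\delta W_0\bm c$, of size up to $B(1+D\delta)$ rather than $B(1+\delta)$. Also, a single preliminary layer realizing all $3^D$ shifts suffices, not one per coordinate. Unlike the error-bound gap, this is only a constant-factor correction.
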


\subsection{Bit decoding}

\begin{lemma} \label{lem:bit-decode}
        Let $n, \ell \in \mathbb{N}^+$.
    There exists a function
    \[
        \phi \in \wt \Fcal_{1,2}(3^{n}, \lceil \ell/n\rceil, 3^n)
    \]
    such that given any $\theta_1, \theta_2, \ldots \in \{0,1\}$, and $i \in \N$, it holds that
    \[
        \phi\left(\sum_{k = 1}^\infty \theta_k 3^{-k}\right)
        \;=\; \left(\sum_{j=1}^{\ell}  \theta_j 2^{-j}, \sum_{k = 1}^\infty \theta_{\ell + k} 3^{-k}  \right).
    \]
\end{lemma}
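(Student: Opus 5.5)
The plan is to proceed by induction on $\ell$ in blocks of $n$ digits, following the ternary bit-extraction idea of \citet{ou2024three}. The basic building block is a single \emph{digit-peeling} step. Let $x = \sum_{k\ge1}\theta_k 3^{-k}$ with $\theta_k\in\{0,1\}$. Then $3x = \theta_1 + \sum_{k\ge1}\theta_{k+1}3^{-k}$, and the tail lies in $[0,1/2]$. So $3x\in[0,1/2]$ when $\theta_1=0$ and $3x\in[1,3/2]$ when $\theta_1=1$; the gap $(1/2,1)$ is the point of the reduced (binary-digit) ternary alphabet. Consequently $\theta_1 = T(3x)$, where $T$ is the piecewise linear map equal to $0$ on $(-\infty,1/2]$, to $1$ on $[1,\infty)$, and linear in between. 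That is, $T(t)=2\sigma(t-1/2)-2\sigma(t-1)$, which has weights $O(1)$. The remainder is $3x-\theta_1$.

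Rather than peeling one digit per layer, which would give depth $\ell$ instead of $\lceil \ell/n\rceil$, I would peel $n$ digits at once. On the input set $\{\sum\theta_k3^{-k}\}$, the value $3^n x$ lies in a union of $2^n$ disjoint intervals $[m, m+1/2]$, where $m=\sum_{k=1}^n\theta_k3^{n-k}$ ranges over integers with ternary digits in $\{0,1\}$. The gaps between consecutive admissible $m$ all have length at least $1/2$.

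The next step is to build two one-hidden-layer networks of width $O(3^n)$. The first is the step function $S_n$, which equals $\sum_{k=1}^n\theta_k 2^{-k}$ on $[m,m+1/2]$. The second computes the remainder $3^nx - m$. Each is a piecewise linear function with at most $2\cdot 2^n\le 3^n$ breakpoints, whose transition ramps sit inside the gaps of length $\ge 1/2$. I would write them as sums of ramps $c\,(\sigma(t-a)-\sigma(t-a-1/2))$. This gives slopes $O(1)$, jump sizes at most $3^n$, and breakpoints in $[0,3^n]$, so all weights are bounded by $3^n$. The input scaling $3^n x$ is a single weight of size $3^n$.

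One block then maps $(y,x)\mapsto(y + 2^{-jn}S_n(3^nx),\,3^nx-m)$. The accumulator $y$ is carried forward through the identity $\sigma(y)-\sigma(-y)$, and the factor $2^{-jn}$ is absorbed into the output weights, which are at most $1$. Composing $\lceil \ell/n\rceil$ blocks of width $O(3^n)$ gives depth $\lceil\ell/n\rceil$ (times a constant) and weights at most $3^n$. This lies in $\wt\Fcal_{1,2}(3^n,\lceil\ell/n\rceil,3^n)$, since the notation absorbs constant-factor inflations; a weight of $3^n\cdot O(1)$ becomes $(3^n)^C$.

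If $n\nmid\ell$, the last block peels only $\ell - n\lfloor\ell/n\rfloor$ digits. This uses the same construction with a smaller $n$ and stays within the bounds.

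The main obstacle is making sure the remainder is exactly $\sum_k\theta_{\ell+k}3^{-k}$ and lies in $[0,1/2]$ after every block. This is what allows the input-set structure to be reproduced for the next block, so that the blocks can be iterated. The check is immediate from the interval computation above: on $[m,m+1/2]$ the remainder map is exactly $t-m$. The only remaining point is to confirm that the ramps never overlap the admissible intervals. This holds because $3^nx-m\le 1/2$ while the next admissible integer is at least $m+1$.
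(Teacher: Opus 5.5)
Your proposal is correct and follows essentially the same route as the paper. Both arguments induct over $\lceil \ell/n\rceil$ blocks, each a one-hidden-layer network of width $O(3^n)$ with weights of order $3^n$, which peels $n$ reduced-alphabet ternary digits, adds their binary value to an accumulator, and passes on the shifted remainder. The paper imports \Cref{lem:bit-sum} and takes differences of partial bit sums; you instead build each block directly from the gaps between the intervals $[m,m+1/2]$. Your shorter final block when $n \nmid \ell$ also gives the remainder $\sum_k \theta_{\ell+k}3^{-k}$ more cleanly than the paper's appeal to $\psi(x,\min\{n,\ell-(m-1)n\})_2$, whose stated second output always shifts by $n$ digits.
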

\begin{proof}
    Let $M = \lceil \ell/n \rceil$ and write 
    \begin{align*}
        \sum_{j=1}^{\ell}  \theta_j 2^{-j}
        = \sum_{m=1}^{M}  \left( \sum_{j=1}^{n} \theta_{(m-1) n + j} 2^{-(m-1) n - j} \ind_{(m -1) n + j \le \ell}  \right).
    \end{align*}
    Let $\psi$ be the function in \Cref{lem:bit-sum} and write it as $\psi(x, i) = (\psi(x, i)_1, \psi(x, i)_2)$. 
    For $m = 1, \dots, M$, define $\phi_m = (\phi_{m, 1}, \phi_{m, 2})$ as
    \begin{align*}
        \phi_{m, 1}(y, x) &=  y + \sum_{j=1}^{n} [\psi(x, j)_1 - \psi(x, j - 1)_1] 2^{-(m-1) n - j}  \ind_{(m -1) n + j \le \ell} , \\
         \phi_{m, 2}(y,x) &= \psi(x, \min\{n, \ell - (m-1) n\})_2.
    \end{align*}
    The function $\psi_j(x) = [\psi(x, j)_1 - \psi(x, j - 1)_1]$ can be implemented by parallelizing and subtracting two networks in $\wt \Fcal_{1,1}(2^{n}, 1, 3^n)$ by \Cref{lem:bit-sum}. The summation over $j$ can be implemented by parallelizing $n$ such networks and adding their outputs together with fixed weights, all of which bounded by 1. A final layer is required to add the result to $y$.  Thus, $\phi_m \in \wt \Fcal_{2,2}(n 2^{n}, 1, 3^n)$ for all $m = 1, \dots, M$. Since $n 2^{n} \le 2 \times  3^{n}$ for all $n \ge 1$, we can also write $\phi_m \in \wt \Fcal( 3^{n}, 1, 3^n)$.
    
    We prove the lemma by induction on $M$. For $M = 1$, define $\phi= \phi_{1}(0, \cdot) \in \wt \Fcal_{1,2}(3^{n}, 1, 3^n)$, which satisfies
    \begin{align*}
        \phi(x) &= \left( \sum_{j=1}^{n} \theta_j 2^{-j} \ind_{j \le \ell}, \sum_{k = 1}^\infty \theta_{n + k} 3^{-k}  \right),
    \end{align*}
    as required.
     Now suppose $M \ge 2$ and there is a function $\phi' \in \wt \Fcal_{1,2}(3^n, M - 1, 3^n)$ such that
    \begin{align*}
        \phi'\left(\sum_{k = 1}^\infty \theta_k 3^{-k}\right)
        \;=\; \left(\sum_{j=1}^{(M-1) n}  \theta_j 2^{-j}, \sum_{k = 1}^\infty \theta_{(M-1) n + k} 3^{-k}  \right).
    \end{align*}
    Then, the function $\phi = \phi_{M} \circ \phi' \in  \wt \Fcal_{1,2}(3^n, M, 3^n)$ satisfies
    \begin{align*}
        \phi'\left(\sum_{k = 1}^\infty \theta_k 3^{-k}\right)_1 &=  \phi_M\left(\sum_{j=1}^{(M-1) n}  \theta_j 2^{-j}, \sum_{k = 1}^\infty \theta_{(M-1) n + k} 3^{-k}\right)_1   \\
        &= \sum_{j=1}^{(M-1) n}  \theta_j 2^{-j} + \sum_{j=1}^{n} \theta_{(M-1) n + j} 2^{-(M-1) n - j} \ind_{(M -1) n + j \le \ell}  \\
        &= \sum_{j=1}^{\ell}  \theta_j 2^{-j} ,
    \end{align*}
    and
    \begin{align*}
        \phi'\left(\sum_{k = 1}^\infty \theta_k 3^{-k}\right)_2 &=  \phi_M\left(\sum_{j=1}^{(M-1) n}  \theta_j 2^{-j}, \sum_{k = 1}^\infty \theta_{(M-1) n + k} 3^{-k}\right)_2   \\
        &= \sum_{k = 1}^\infty \theta_{\min\{n, \ell - (M - 1)n\} + (M-1) n + k} 3^{-k}  \\
           &= \sum_{k = 1}^\infty \theta_{\ell + k} 3^{-k}  .
\end{align*}
This completes the induction and hence the proof.
\end{proof}

\section{Useful results from other papers}

\begin{lemma}[Proposition C.1 in \citet{ou2024three}] \label{lem:pw-linear}
    Let $M, N, L \in \N$ with $3 \le M \le N^2L$, and
    \( x_1 < x_2 < \dots < x_{M} \) and $y_1, \dots, y_M \in \R$. Denote $\delta = \min_{1 \le i \le M} | x_{i} - x_{i - 1}| / (\max\{1, \max_{1 \le i \le M} |x_i|\}$ and $\bar y = \max_{0 \le i \le M} |y_i|$. Then there is $\phi \in \wt \Fcal_{1,1}(N, L, (M^6 \delta^{-4} \bar y)^{1/L})$ such that
    \begin{enumerate}[(i)]
        \item \( \phi(x_i) = y_i \) for \( i = 1, \dots, M \).
        \item \( \phi \) is affine on each interval \( [x_{i-1}, x_i] \) for \( 2 \le i \le M \) and constant elsewhere.
    \end{enumerate}
\end{lemma}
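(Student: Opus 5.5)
This statement is a result quoted from \citet{ou2024three}, so I would reconstruct their argument rather than invent a new one. The plan has two layers. First, realize an arbitrary continuous piecewise linear interpolant of $M \le N^2L$ breakpoints with width $N$ and depth $L$. Second, control the weight magnitude by spreading the large slopes multiplicatively across the $L$ layers, which gives the geometric-mean bound $(M^6\delta^{-4}\bar y)^{1/L}$. By adding dummy breakpoints I may assume that $M = N^2L$ exactly, and, after extending $\phi$ constantly outside $[x_1,x_M]$ (property (ii)), that the data are normalized to $[0,1]$ using the factor $\max\{1,\max_i|x_i|\}$ that appears in $\delta$.

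\emph{Step 1 (splitting into blocks).} I would partition the breakpoints into $NL$ consecutive groups of $N$ points each. A first subnetwork of width $N$ and depth $O(L)$ computes, from $x$, the index $m(x)$ of the group containing $x$ together with the left endpoint $x_{(m-1)N+1}$. This is a step function with $NL$ pieces. It can be built as in \Cref{lem:stepfun-2}: a width-$N$, one-hidden-layer piecewise linear map handles $N$ pieces, and composing $L$ such maps refines the partition. The steep ramps near the jumps have slope of order $\delta^{-1}$. I would realize each ramp as a product of $L$ factors $\delta^{-1/L}$ by routing through $L$ identity-like layers, each multiplying by $\delta^{-1/L}$.

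\emph{Step 2 (local fitting).} Inside group $m$, the target is a piecewise linear function of the local coordinate $x - x_{(m-1)N+1}$ with $N$ breakpoints. Its slopes and breakpoint offsets depend on $m$. I would write it as $\sum_{k=1}^N a_{m,k}\,\sigma(x - x_{(m-1)N+1} - b_{m,k})$. The coefficients $a_{m,k}, b_{m,k}$ are functions of $m$ alone, so they can be produced by $N$ parallel step-function networks of the type in Step 1, each with $NL$ pieces.

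\emph{Main obstacle.} The difficulty is computing the products $a_{m,k}\cdot\sigma(\cdot)$ without multiplication gates while keeping the width at $N$ rather than $N^2$. To avoid multiplication, I would use the fact that $x$ and $m$ determine everything: I would encode $a_{m,k}$ directly as the breakpoint values $y_i$, interpolating with hat functions in the local coordinate, so the output becomes a sum of $y$-weighted hats, $\sum_k y_{(m-1)N+k}\,h_k(x)$. The $y$-values, which are at most $\bar y$, are fetched by step functions. The hats $h_k$ have slopes of order $\delta^{-1}$ and are evaluated at $x$ shifted by the fetched offset. Each remaining product of a bounded quantity with a hat can be written as a min/max of linear pieces, because the hat takes only the values $0$ or $1$ at the data points and is linear between them. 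For that reason I expect to need the constant $C(D,s)$ hidden in $\wt\Fcal$ to absorb a fixed factor of extra width and depth.

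\emph{Weight bookkeeping.} Finally, I would collect the worst slope magnitudes, which are polynomial in $M$, $\delta^{-1}$ and $\bar y$, and distribute each across $L$ layers as an $L$-th root. The exponents $6$ and $4$ come from tracking how many times $M$ and $\delta^{-1}$ multiply along a single path. If the precise exponents do not come out exactly, I would accept any fixed polynomial, since this only changes the constant $C$.
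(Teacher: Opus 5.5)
\textbf{Main gap: property (ii).} Your local-fitting step does not deliver property~(ii), and this is where the argument breaks. A min/max surrogate for $y\cdot h_k(x)$ is exact only where $h_k\in\{0,1\}$, i.e.\ at the nodes. On $(x_{i-1},x_i)$ two neighbouring hats are fractional, and the sum is no longer affine. For instance, take $\min\{y,\bar y h\}$ (for $y\ge 0$), $y_{i-1}=y_i=\bar y/2$, $h_{i-1}=1-t$ and $h_i=t$. The output is $\min\{\bar y/2,\bar y(1-t)\}+\min\{\bar y/2,\bar y t\}$, which equals $\bar y$ at $t=1/2$ instead of $\bar y/2$. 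So at best you obtain a memorizer satisfying~(i). Yet (ii) is exactly what the paper needs downstream: $\psi$ in \Cref{lem:stepfun-2} must be constant on whole intervals, and in \Cref{lem:point-fitting-ours} $\phi_u,\phi_w$ must be constant on each $I_m$ and $\phi_{\ind}$ must have an exact shape.

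The bilinear obstruction you identified also reappears in two other places. First, for non-uniform spacing, the hat that is $1$ at $x_i$ and $0$ at $x_{i\pm1}$ has slopes $1/(x_i-x_{i-1})$ and $1/(x_{i+1}-x_i)$ that depend on $(m,k)$. A fixed-slope hat shifted by a fetched offset is therefore the wrong function, and scaling the local coordinate by a fetched slope is again a product. Second, your group-index function has to ramp inside some gap $[x_{mN},x_{mN+1}]$, where all fetched quantities vary but the output must be affine.

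\textbf{Smaller issues.} Step~1 appeals to \Cref{lem:stepfun-2}, which the paper proves \emph{from} \Cref{lem:pw-linear}, so this is circular. Its refinement $\psi(x)+\psi(K'(x-\psi(x)))/K'$ also relies on the self-similarity of a uniform grid, which is unavailable for arbitrary breakpoints. In addition, $N$ parallel copies of a width-$N$ fetch network have width $N^2$ (fixable by sharing the ramp units across outputs).

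\textbf{The missing idea.} The paper gives no proof here and imports the statement from \citet{ou2024three}. What your proposal lacks is a multiplication-free \emph{exact} representation of $\phi(x)=y_1+\sum_{i=1}^M\Delta_i\,\sigma(x-x_i)$, with slope jumps $\Delta_i$ summing to zero. A standard route is as follows.
\begin{enumerate}[(a)]
\item Split the kinks into $O(L)$ chunks of $N^2$, and each chunk into $N$ sub-groups of $N$ consecutive kinks.
\item Realize one chunk exactly by a two-hidden-layer block of width $O(N)$. The first layer has $O(N)$ breakpoints, forming a short ``reset'' interval inside each gap between sub-groups.
\item Each second-layer unit $\sigma(g_k)$, with $g_k$ affine on every sub-group, creates one kink of prescribed size at its zero crossing in each sub-group. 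The reset intervals let $g_k$ change magnitude without changing sign, which decouples the sub-groups. The output weight $\pm1$ fixes the sign of the jump, so you use $N$ units per sign.
\item The spurious kinks at the first-layer breakpoints are cancelled by passing those units through.
\item Accumulate the chunks over $O(L)$ such blocks while carrying $x$ and the partial sum.
\end{enumerate}
Weight bookkeeping only makes sense on top of such an exact construction. Even then, the $L$-th-root bound needs a genuine argument, because large coefficients occur in every block, not only at the input.
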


\begin{lemma}[Lemma D.1 in \citet{ou2024three}, simplified]\label{lem:bit-sum}
    Let $n \in \mathbb{N}^+$.
    There exists a function
    \[
        \phi \in \wt \Fcal_{1,1}(2^{n}, 1, 3^n)
    \]
    such that given any $\theta_1, \theta_2, \ldots \in \{0,1\}$, and $i \in \N$, we have
    \[
        \phi\left(\sum_{k = 1}^\infty \theta_k 3^{-k},\, i\right)
        \;=\; \left(\sum_{j=1}^{\min\{i, n\}} \theta_j, \sum_{k = 1}^\infty \theta_{n + k} 3^{-k}  \right).
    \]
\end{lemma}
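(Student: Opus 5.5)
The plan is to exploit the fact that the admissible inputs $x=\sum_{k\ge1}\theta_k 3^{-k}$ with $\theta_k\in\{0,1\}$ lie in a Cantor-type set with large gaps, so that both outputs are piecewise linear functions of $x$ with only $2^n$ pieces. For $\bm b=(b_1,\dots,b_n)\in\{0,1\}^n$, put $a_{\bm b}=\sum_{k\le n} b_k3^{-k}$. The tail satisfies $\sum_{k>n}\theta_k3^{-k}\in[0,3^{-n}/2]$. Hence every admissible $x$ whose first $n$ digits equal $\bm b$ lies in the ``island'' $I_{\bm b}=[a_{\bm b},a_{\bm b}+3^{-n}/2]$. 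Any two distinct $a_{\bm b}$ differ by at least $3^{-n}$, so consecutive islands are separated by gaps of length at least $3^{-n}/2$. On $I_{\bm b}$ the two target quantities are:
\begin{align*}
\sum_{k\ge1}\theta_{n+k}3^{-k}=3^n(x-a_{\bm b}), \qquad \sum_{j\le\min\{i,n\}}\theta_j=\sum_{j\le \min\{i,n\}} b_j .
\end{align*}
The first is affine with slope $3^n$ on each island, and the second is constant on each island.

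Both outputs will be realized as sums of ``ramp steps''
\begin{align*}
S_t(x)=\sigma\bigl(2\cdot 3^{n}(x-t)\bigr)-\sigma\bigl(2\cdot 3^{n}(x-t)-1\bigr),
\end{align*}
one for each gap, with $t$ the left end of the gap. Each $S_t$ rises from $0$ to $1$ inside the gap and is constant on every island. Order the islands increasingly and write the second output as
\begin{align*}
3^n x-\sum_{\text{gaps}} (\Delta_t)\, S_t(x)-3^n a_{\min},
\end{align*}
where $\Delta_t=3^n$ times the jump of $a_{\bm b}$ across gap $t$. These jumps satisfy $|\Delta_t|\le 3^n$. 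Similarly, for each fixed $i$, the first output is a sum over gaps of $S_t$ weighted by the integer jump of $\sum_{j\le\min\{i,n\}}b_j$ across that gap. These jumps are bounded by $n\lesssim 3^n$.

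This uses one hidden layer of $O(2^n)$ ReLU units, namely two per gap plus two to carry the identity $3^nx=\sigma(3^nx)-\sigma(-3^nx)$. All weights are bounded by a constant multiple of $3^n$, and the width is a constant multiple of $2^n$. Both bounds are absorbed by the $\wt\Fcal$ convention, which gives membership in $\wt\Fcal_{1,1}(2^n,1,3^n)$.

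The main obstacle is the dependence on the integer input $i$. In the application (\Cref{lem:bit-decode}) the indices $j$ and $j-1$ are fixed when $\psi(\cdot,j)$ is formed, so the natural reading is that $i$ is a parameter and one network is built for each $i$. The construction above does exactly this, since the jump weights depend on $i$ only through the fixed integers $\sum_{j\le\min\{i,n\}}b_j$.

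If $i$ must genuinely be an input, I would first compute the digit indicators $\theta_j(x)$ for $j\le n$, each again a sum of ramp steps. I would then gate them with $\theta_j\ind_{j\le i}=\sigma\bigl(\theta_j(x)+(\sigma(i-j+1)-\sigma(i-j))-1\bigr)$ and sum. This costs only a constant number of extra layers and width $O(n2^n)$. That is still admissible in the $\wt\Fcal$ class up to the constant factor in depth, which is harmless for every later use in the paper.
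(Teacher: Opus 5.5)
Your construction is correct. The islands $I_{\bm b}$ have length $3^{-n}/2$, and consecutive islands are separated by gaps of length at least $3^{-n}/2$. Hence each ramp $S_t$ anchored at the left end of a gap vanishes on every island to its left and equals $1$ on every island to its right. Both outputs are then exact linear combinations of the same $2^n-1$ ramp pairs and the identity term $3^n x$. This gives a single hidden layer of width about $2^{n+1}$ with all weights at most $2\cdot 3^n+1\le 9^n$, i.e.\ membership in $\wt\Fcal(2^n,1,3^n)$.

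The paper gives no proof of this lemma; it imports it from \citet{ou2024three}. Your argument is therefore a self-contained replacement for the citation rather than a competing route. Your reading of $i$ as a fixed parameter, with each output coordinate a separate network in $\wt\Fcal_{1,1}(2^n,1,3^n)$, is exactly how \Cref{lem:bit-decode} uses it.

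Two small remarks. First, in your genuine-input variant, width $O(n2^n)$ is not within an $n$-independent constant of $2^n$, so as written it would leave $\wt\Fcal(2^n,\cdot,\cdot)$. The fix is that all digit indicators $\theta_j(x)$ are linear combinations of the same ramp units. The first layer then needs only width $2^{n+1}+O(n)$, and the gating layer width $n+O(1)$.

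Second, your method yields more than the statement. For fixed $i'=\min\{i,n\}$, the shifted tail $\sum_{k\ge1}\theta_{i'+k}3^{-k}=3^{i'}\bigl(x-\sum_{k\le i'}b_k3^{-k}\bigr)$ is again affine on each island. So the same ramps also produce the $i$-dependent shift that the last block in the proof of \Cref{lem:bit-decode} actually relies on. The lemma as written always shifts by $n$ digits.
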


\begin{lemma}[Proposition 4.1 of \citet{Lu2021}]\label{lem:poly-approx}
Let 
\(\alpha\in\mathbb{N}^d\) with \(\lvert \alpha\rvert_1\le k\in\mathbb{N}^+\).
For any \(N,L\in\mathbb{N}^+\), there exists $\phi \in \Fcal_{1,1}(9(N+1)+k-1, 7k^2L, 2)$ such that
\begin{align*}
| \phi(x) - x_1^{\alpha_1}x_2^{\alpha_2}\cdots x_d^{\alpha_d} | \le 9k(N+1)^{-7kL}
\quad\text{for any } x\in[0,1]^d.
\end{align*}
\end{lemma}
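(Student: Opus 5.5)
The plan is to follow the route of \citet{Lu2021}: build an accurate square function, obtain multiplication from it by polarization, and get a monomial of degree at most $k$ by chaining at most $k-1$ multiplications in sequence. Write $x^{\balpha}=\prod_{i=1}^{k}z_i$, where the list $z_1,\dots,z_k$ repeats each $x_j$ exactly $\alpha_j$ times and is padded with the constant $1$. The network computes $p_1=z_1$ and then $p_{m}=\widehat{\times}(p_{m-1},z_m)$ for $m=2,\dots,k$. The coordinates of $\bx$ not yet consumed are carried forward through the identity $x=\sigma(x)$, which is valid on $[0,1]$. This carrying is the source of the additive $k-1$ in the width. The chain has depth at most $k$ times the depth of one multiplication block.

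\emph{Step 1 (square).} I would approximate $t\mapsto t^2$ on $[0,1]$ by Yarotsky's telescoping identity $t^2 = t-\sum_{s\ge1} 4^{-s}T^{(s)}(t)$, where $T$ is the tent map. To make the error decay like $(N+1)^{-\text{depth}}$ instead of $4^{-\text{depth}}$, one layer of width about $N$ should realize a sawtooth with about $N+1$ teeth, i.e.\ the $\lceil\log_2(N+1)\rceil$-fold composition of $T$, in a single layer. Iterating such layers $7kL$ times and accumulating the partial sums in one extra channel should give a piecewise linear interpolant of $t^2$ on a grid of mesh $(N+1)^{-7kL}$, with uniform error at most $(N+1)^{-14kL}$. \emph{Step 2 (product).} Using $xy=2\bigl[((x+y)/2)^2-(x/2)^2-(y/2)^2\bigr]$ for $x,y\in[0,1]$, I would run three squaring networks in parallel, giving width at most $9(N+1)$. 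The output is then clipped to $[0,1]$ with two ReLUs so that the next multiplication receives inputs in its domain. The per-multiplication error is then $\lesssim (N+1)^{-7kL}$.

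\emph{Step 3 (error accumulation).} Since every $p_m$ and $z_m$ lies in $[0,1]$ and multiplication is $1$-Lipschitz in each argument there, the error recursion is $e_m\le e_{m-1}+\eta$, where $\eta$ is the per-step error. This gives total error at most $k\eta\le 9k(N+1)^{-7kL}$ once the constant in Step 2 is tracked. The depth budget is $k$ multiplication blocks of depth $7kL$ each, i.e.\ $7k^2L$, and the width is $9(N+1)+k-1$.

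\emph{Main obstacle.} The delicate point is keeping all weights bounded by $2$ while a single layer realizes a sawtooth with $N+1$ teeth, which naively needs slopes of order $N$. My first attempt would be to form the large slope as a sum of many width-direction neurons with weights at most $2$. The alternative is to realize the factor $N+1$ through several doubling layers folded into the depth budget; this is exactly where the constant $7$ in $7kL$ gets spent. I would also handle biases $j/(N+1)$ (already bounded by $1$) and rescale intermediate values into $[0,1]$ after each layer, so that no layer requires large coefficients.
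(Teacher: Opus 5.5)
Your outline follows the construction the paper relies on. The paper itself proves nothing here: it cites Proposition~4.1 of Lu et al.\ and remarks that the weight bound can be traced from their proof. Your Steps~2--3 are sound: polarization with output weight $2$, clipping by $\sigma(t)-\sigma(t-1)$, the recursion $e_m\le e_{m-1}+\eta$, and the width/depth bookkeeping.

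\textbf{The gap.} The weight bound $2$ is part of the statement. You flag it but do not close it, and neither of your remedies works. Pass a scalar $y\in[0,1]$ through one hidden layer, $y\mapsto\sum_j a_j\sigma(w_jy+b_j)$. The slope changes of the output add up to at most $\sum_j|a_jw_j|$, which is $O(1)$ per neuron when all weights are at most $2$. A sawtooth with $N+1$ full-height teeth has slopes $\pm2(N+1)$ and slope changes totalling about $8(N+1)^2$.
\begin{itemize}
\item Spreading the slope over parallel neurons therefore needs on the order of $(N+1)^2$ neurons, far beyond $9(N+1)+k-1$. Rescaling intermediate values does not help, because the scale must be restored by equally large weights.
\item Building the factor $N+1$ from tent layers $T(y)=2\sigma(y)-2\sigma(2y-1)$ costs about $\log_2(N+1)$ layers per block, i.e.\ total depth of order $k^2L\log N$. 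The $7$ in $7kL$ is simply the exponent fixed in the statement, not slack, and no constant absorbs a $\log N$ factor.
\end{itemize}
For the same reason, the bound $2$ does not appear to be a literal trace-through of Lu et al. Their squaring blocks gain a factor of order $N$ per block with the standard one-neuron-per-breakpoint sawtooth, whose weights are then of order at least $\sqrt N$.

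\textbf{The missing idea.} Your Step~1 is too accurate by a square: it claims squaring error $(N+1)^{-14kL}$, whereas $(N+1)^{-7kL}$ suffices. That surplus is exactly what pays for bounded weights. Since the interpolation error of $t^2$ is the square of the mesh, each layer only needs to refine the mesh by about $\sqrt{N+1}$.
\begin{itemize}
\item Take the $m$-tooth sawtooth $S$ with $m=\lceil\sqrt{N+1}/2\rceil$. Realize its slope change $2m$ at the origin and $\pm4m$ at the interior breakpoints $i/(2m)$ by copies of $\pm2\,\sigma(y-b)$.
\item This uses $4m^2-m$ neurons. Together with one accumulator channel this fits into width $3(N+1)$, and all weights stay at most $2$, also after merging consecutive affine maps.
\item Let $f_M$ be the piecewise linear interpolant of $t^2$ on the grid of mesh $1/M$, and set $g(y)=y-f_{2m}(y)\ge0$. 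It has the same breakpoints as $S$, so the same neurons produce it with small output weights.
\item Use $f_M(t)-t^2=M^{-2}e(\{Mt\})$ with $e(\tau)=\tau(1-\tau)$, which is symmetric about $1/2$. Also, $S^{\circ\ell}$ maps each cell of mesh $(2m)^{-\ell}$ affinely onto $[0,1]$.
\end{itemize}
These give the non-dyadic telescoping identity
\[
f_{(2m)^D}(x)=x-\sum_{\ell=0}^{D-1}(2m)^{-2\ell}\,g\bigl(S^{\circ\ell}(x)\bigr),\qquad \bigl|f_{(2m)^D}(x)-x^2\bigr|\le\tfrac14(2m)^{-2D}\le\tfrac14(N+1)^{-D}.
\]
Its partial sums $f_{(2m)^\ell}(x)$ are nonnegative, so they can be carried in the accumulator through $\sigma$.

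With $D=7kL$, each squaring network has width $3(N+1)$, depth $7kL+1$ and error $\tfrac14(N+1)^{-7kL}$. Your Steps~2--3 then give total error at most $\tfrac32(k-1)(N+1)^{-7kL}\le 9k(N+1)^{-7kL}$, depth $(k-1)(7kL+2)\le 7k^2L$, and width $9(N+1)+k-1$, with all weights bounded by $2$.
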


\begin{lemma}[Lemma 3.1 of \citet{Lu2021}] \label{lem:mid-1d}
    The median function $\operatorname{mid}\colon\mathbb{R}^3\to\mathbb{R}$ satisfies 
    $\operatorname{mid} \in \Fcal_{3,1}(14, 2, 1)$.
\end{lemma}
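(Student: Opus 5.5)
We derive the median-function bound from the identity
\begin{align*}
\operatorname{mid}(x_1,x_2,x_3) = x_1+x_2+x_3 - \max\{x_1,x_2,x_3\} - \min\{x_1,x_2,x_3\},
\end{align*}
and then realize $\max$ and $\min$ of three numbers with ReLU units whose weights lie in $\{-1,0,1\}$. The basic tool is the identity $\max\{a,b\} = \sigma(a-b) + b$, with $b = \sigma(b)-\sigma(-b)$ as in the identity construction above, and similarly $\min\{a,b\} = b - \sigma(b-a)$. Every weight here is $\pm1$ and every bias is $0$, so the bound $B=1$ holds throughout.

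\emph{First hidden layer.} From $(x_1,x_2,x_3)$ we compute the neurons
\begin{align*}
\sigma(x_1-x_2),\ \sigma(x_2-x_1),\ \sigma(\pm x_1),\ \sigma(\pm x_2),\ \sigma(\pm x_3).
\end{align*}
That is $2+6=8$ neurons. From these we can read off $x_1, x_2, x_3$ and
\begin{align*}
m_{12} = \max\{x_1,x_2\} = \sigma(x_1-x_2)+\sigma(x_2)-\sigma(-x_2),\qquad n_{12} = \min\{x_1,x_2\} = \sigma(x_2)-\sigma(-x_2)-\sigma(x_2-x_1).
\end{align*}
All of these are linear combinations of first-layer outputs with coefficients in $\{-1,0,1\}$.

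\emph{Second hidden layer.} Feeding these linear combinations forward, we compute
\begin{align*}
\sigma(m_{12}-x_3),\ \sigma(x_3-n_{12}),\ \sigma(\pm x_1),\ \sigma(\pm x_2),\ \sigma(\pm x_3).
\end{align*}
That is $2+6=8$ neurons, and we may add spare neurons up to width $14$. The output layer forms
\begin{align*}
\operatorname{mid} = x_1+x_2+x_3 - \bigl(\sigma(m_{12}-x_3)+x_3\bigr) - \bigl(x_3-\sigma(x_3-n_{12})\bigr),
\end{align*}
again with coefficients in $\{-1,0,1\}$, after re-expressing each $x_i$ through $\sigma(x_i)-\sigma(-x_i)$. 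The network therefore has depth $2$ and width at most $14$.

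\emph{Weight-magnitude subtlety.} The one potential obstacle is that the pre-activation $m_{12}-x_3$ is a composite linear form in the first-layer outputs. Composing affine maps multiplies weight matrices, so we must check that the entries stay bounded by $1$. This holds because $m_{12}$ and $n_{12}$ are sums of distinct first-layer neurons with coefficients $\pm1$, so no coefficients accumulate. The output layer is likewise a sum of distinct second-layer neurons with coefficients in $\{-2,\dots,2\}$. If a coefficient $2$ appears, we duplicate the corresponding neuron, which the slack in the width $14$ allows, and so keep every weight bounded by $1$. Checking correctness is then a direct case-free verification via the identity $\operatorname{mid} = \text{sum} - \max - \min$.
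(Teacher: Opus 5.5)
Your construction is correct and follows the same route as the proof of Lemma 3.1 in \citet{Lu2021}, on which the paper relies (the paper itself gives no proof). That route writes $\operatorname{mid}$ as sum minus max minus min, builds the three-way max and min from pairwise ones in two ReLU layers, and carries identities through via $t=\sigma(t)-\sigma(-t)$. You differ only in using $\max\{a,b\}=b+\sigma(a-b)$ instead of $\tfrac12(a+b+|a-b|)$ and in sharing the identity neurons between branches, which gives width $8\le 14$ with weights in $\{-1,0,1\}$. Your duplication remark is not needed: the output simplifies to $\operatorname{mid}=x_1+x_2-x_3-\sigma(m_{12}-x_3)+\sigma(x_3-n_{12})$, whose coefficients on the second-layer neurons are all $\pm1$.
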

\begin{remark}
    The weight bounds are not stated in the original versions of the two previous results, but can easily be traced back from their proofs.
\end{remark}

\begin{lemma}[Lemma 3.4 of \citet{Lu2021}] \label{lem:median-smoothing}
    Given any $\eps>0$, $K\in\mathbb{N}^+$, and $\delta\in(0,1/(3K)]$, define
    \begin{align*}
        \Omega_{K, \delta} = \bigcup_{j = 1}^D \left\{\bx \in [0,1]^D\colon x_j \in \bigcup_{k = 1}^{K - 1} \left(\frac{k}{K} - \delta,  \frac{k}{K}\right)\right\},
    \end{align*}
    and assume $f\in C([0,1]^D)$ and
    $g\colon\mathbb{R}^D\to\mathbb{R}$ is a general function with
    \[
        |g(\bx)-f(\bx)|\le \eps,\
        \quad \text{for any } \bx \in [0,1]^D\setminus\Omega([0,1]^D,K,\delta).
    \]
    Then
    \[
        |M_g(\bx)-f(\bx)|\le \eps + D\cdot \omega_f(\delta)
        \quad \text{for any } \bx \in [0,1]^D,
    \]
    where $M_g:=M_{g, D}$ is defined by induction through $M_{g,0}=g$ and
    \begin{equation}
        M_{g, i+1}(\bx)
        := \operatorname{mid}\bigl(M_{g,i}(\bx-\delta \be_{i+1}),\,
        M_{g,i}(\bx),\,M_{g,i}(\bx+\delta \be_{i+1})\bigr),
        \quad i=0,1,\ldots,D-1,
        \tag{3.4}
    \end{equation}
    where $\{\be_i\}_{i=1}^D$ is the standard basis in $\mathbb{R}^D$.
\end{lemma}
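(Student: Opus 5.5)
The statement to prove is \Cref{lem:median-smoothing}: the iterated median $M_g$ is uniformly $\eps + D\omega_f(\delta)$-close to $f$ on all of $[0,1]^D$. I would argue by induction over the coordinates. The claim carried along is about $M_{g,i}$: it should be close to $f$ on the set where only the first $i$ coordinates are allowed to lie in bad strips. Write $\Omega^{(j)} = \{\bx\colon x_j \in \bigcup_{k=1}^{K-1}(k/K-\delta, k/K)\}$, so that $\Omega_{K,\delta} = \bigcup_j \Omega^{(j)}$. The inductive statement is
\begin{align*}
|M_{g,i}(\bx) - f(\bx)| \le \eps + i\,\omega_f(\delta) \quad \text{for } \bx \in [0,1]^D \setminus \textstyle\bigcup_{j > i}\Omega^{(j)}.
\end{align*}
For $i=0$ this is exactly the hypothesis. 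For $i=D$ it is the claim, since then no coordinate is excluded.

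For the inductive step from $i$ to $i+1$, fix $\bx$ whose coordinates $x_j$, $j > i+1$, avoid the bad strips. The three points $\bx - \delta\be_{i+1}$, $\bx$, $\bx + \delta\be_{i+1}$ differ only in coordinate $i+1$. The key geometric fact is that the bad strips $(k/K-\delta, k/K)$ have length $\delta$ and are separated by gaps of length $1/K - \delta \ge 2\delta$, because $\delta \le 1/(3K)$. Hence at most one of the three values $x_{i+1} - \delta$, $x_{i+1}$, $x_{i+1}+\delta$ falls into a strip. I would check this directly: two of them lying in strips would require either two points at distance $\le 2\delta$ inside a single open strip of length $\delta$, which is impossible, or points in two different strips, which is impossible since the gap is $\ge 2\delta$. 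So at least two of the three points satisfy the inductive hypothesis. At each of these good points, $M_{g,i}$ lies within $\eps + i\omega_f(\delta)$ of $f$ at that point. That value in turn lies within $\omega_f(\delta)$ of $f(\bx)$, because the shift is $\delta$ in $\|\cdot\|_\infty$. So at least two of the three values lie in the interval $[f(\bx) - \eps - (i+1)\omega_f(\delta),\, f(\bx)+\eps+(i+1)\omega_f(\delta)]$. The median of three numbers, two of which lie in an interval, also lies in that interval, and this closes the induction.

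The main obstacle is the boundary. The shifted points $\bx \pm \delta\be_{i+1}$ may leave $[0,1]^D$, while the hypothesis on $g$ and the modulus $\omega_f$ are only stated on $[0,1]^D$. I would first try to note that the strips only include interior gridlines $k = 1,\dots,K-1$, and that a point with $x_{i+1}$ near $0$ or $1$ still has two good neighbours among the three inside the cube. That argument fails when $x_{i+1} < \delta$: there $\bx - \delta\be_{i+1}$ leaves the cube. In that case $\bx$ itself and $\bx+\delta\be_{i+1}$ must be good, which holds because $\delta < 1/K - \delta$. The symmetric case near $1$ is handled the same way. Alternatively, I would extend $f$ continuously by reflection or clamping, which preserves $\omega_f$, and interpret $g$'s hypothesis accordingly. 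I would present the clamped-extension version for cleanliness, noting that the median step only ever uses good points that remain in the cube.
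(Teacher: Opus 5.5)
Your proposal is correct and follows the argument of \citet{Lu2021}, which the paper restates without proof. Both induct over coordinates on the sets where only coordinates $j>i$ must avoid the strips, use $\delta\le 1/(3K)$ to guarantee that at least two of $\bx,\bx\pm\delta\be_{i+1}$ are good points inside the cube, and close the step because the median of three numbers lies in any interval containing two of them. Two small corrections: in the boundary case $x_{i+1}<\delta$ the inequality you actually need is $2\delta\le 1/K-\delta$ (exactly the hypothesis $\delta\le 1/(3K)$), not merely $\delta<1/K-\delta$; and the clamped extension of $f$ is unnecessary, since the values of $g$ outside the cube are never controlled and never needed.
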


\end{document}